\newtheorem{theorem}{Theorem}[section]
\newtheorem{proposition}[theorem]{Proposition}
\newtheorem{lemma}[theorem]{Lemma}
\newtheorem{fact}[theorem]{Fact}
\newtheorem{corollary}[theorem]{Corollary}
\newtheorem{conjecture}[theorem]{Conjecture}
\newtheorem{claim}[theorem]{Claim}
\newtheorem{question}[theorem]{Question}
\newtheorem{problem}[theorem]{Problem}
\newtheorem{algo}[theorem]{Algorithm}
\newtheorem{thm}[theorem]{Theorem}
\newtheorem{lem}[theorem]{Lemma}
\theoremstyle{definition}
\newtheorem{definition}[theorem]{Definition}
\newtheorem{example}[theorem]{Example}
\newtheorem{constr}[theorem]{Construction}
\theoremstyle{remark}
\newtheorem*{remark}{Remark}
\newcommand{\boldsubsection}[1]{\par\medskip\noindent\textbf{#1.}}
\let\oldln\ln
\renewcommand{\ln}{\log}
\renewcommand{\geq}{\geqslant}
\renewcommand{\ge}{\geqslant}
\renewcommand{\leq}{\leqslant}
\renewcommand{\le}{\leqslant}
      \newcommand{\g}{\mathcal{F}}
      \newcommand{\f}{\mathcal{G}}
      \newcommand{\h}{\mathcal{H}}
      \newcommand{\mf}{\mathcal{M}}
      \newcommand{\nf}{\mathcal{N}}
      \newcommand{\m}{M}
      \newcommand{\n}{N}
      \newcommand{\set}{S}
      \newcommand{\wb}{\mathfrak{B}_c}
      \newcommand{\ww}{\omega}
      \newcommand{\win}{\omega_{in}}
      \newcommand{\wout}{\omega_{out}}
    \newcommand{\md}{\delta}
\newcommand{\G}{J}
\newcommand{\U}{u}
\newcommand{\w}{\omega }
\newcommand{\ff}{f}
\newcommand{\miniweight}{mini-weight}
\newcommand{\Q}{\mathcal{Q}}
\newcommand{\q}{Q}
\newcommand{\e}{\varepsilon}
\newcommand{\C}{c}
\newcommand{\F}{\mathcal{F}}
\newcommand{\PHI}{\varphi}
\newcommand{\SIG}{\sigma}
\begin{document}
\title{Exact results on traces of sets}

\author[Mingze Li]{Mingze Li}
\address{School of Mathematical Sciences, University of Sciences and Technology of China, Hefei,
China}
\email{lmz10@mail.ustc.edu.cn}

\author[Jie Ma]{Jie Ma}
\address{School of Mathematical Sciences, University of Sciences and Technology of China, Hefei,
China}
\email{jiema@ustc.edu.cn}

\author[Mingyuan Rong]{Mingyuan Rong}
\address{School of Mathematical Sciences, University of Sciences and Technology of China, Hefei,
China}
\email{rong\_ming\_yuan@mail.ustc.edu.cn}

\date{}

\begin{abstract}
For non-negative integers $n$, $m$, $a$ and $b$, we write $\left( n,m \right) \rightarrow \left( a,b \right)$ if for every family $\mathcal{F}\subseteq 2^{[n]}$ with $|\mathcal{F}|\geqslant m$ there is an $a$-element set $T\subseteq [n]$ such that $\left| \mathcal{F}_{\mid T} \right| \geqslant b$, where $\mathcal{F}_{\mid T}=\{ F \cap T : F \in \mathcal{F}  \}$.
A longstanding problem in extremal set theory asks to determine $m(s)=\lim_{n\rightarrow +\infty}\frac{m(n,s)}{n}$,
where $m(n,s)$ denotes the maximum integer $m$ such that $\left( n,m \right) \rightarrow \left( n-1,m-s \right)$ holds for non-negatives $n$ and $s$.
In this paper, we establish the exact value of $m(2^{d-1}-c)$ for all $1\leqslant c\leqslant d$ whenever $d\geqslant 50$, thereby solving an open problem posed by Piga and Sch\"{u}lke.
To be precise, we show that 
\[
m(n,2^{d-1}-c)=\begin{cases}
         \frac{2^{d}-c}{d}n &\mbox{ for } 1\leq c\leq d-1 \mbox{ and } d\mid n \\
         \frac{2^{d}-d-0.5}{d}n &\mbox{ for } c=d \mbox{ and } 2d\mid n 
         \end{cases} 
\]
holds for $d\geq 50$.
Furthermore, we provide a proof that confirms a conjecture of Frankl and Watanabe from 1994, demonstrating that $m(11)=5.3$.
\end{abstract}

\maketitle

\section{Introduction}
Let $V$ be a set with $n$ elements and let $\mathcal{F}$ be a family of subsets of $V$. 
For a subset $T$ of $V$, the $trace$ of $\mathcal{F}$ on $T$ is defined by $\mathcal{F}_{|T}=\{F\cap T:F\in \mathcal{F}\}$.
For positive integers $n$, $m$, $a$, and $b$, we write
$$
(n, m) \rightarrow(a, b)
$$
and we say $(n, m)$ \emph{arrows} $(a, b)$, if for every family $\mathcal{F} \subseteq 2^{V}$ with $|\mathcal{F}| \geqslant m$ and $|V|=n$ there is an $a$-element set $T \subseteq V$ such that $\left|\mathcal{F}_{\mid T}\right| \geqslant b$.

\subsection{Background}
Early investigations into this arrowing relation focus on determining the minimum number $M(n)$ such that for all $m > M(n)$, $(n, m) \rightarrow (a, b)$ holds for fixed integers $a$ and $b$.
Answering a conjecture by Erd\H{o}s \cite{FP1991}, Sauer \cite{S1972}, Shelah and Perles \cite{SP1972}, and Vapnik and Červonenkis \cite{V1968,VC2015} independently proved the now-called Sauer-Shelah lemma, which states that $(n,m)\rightarrow (s, 2^s)$ holds whenever $m>\sum_{i=0}^{s-1} \binom{n}{i}$. 
Bondy and Lov\'asz conjectured and Frankl \cite{F1983} demonstrated that $(n, m) \rightarrow(3,7)$ holds when $m>\lfloor\frac{n^2}{4}\rfloor+n+1$.
The most recent result in this direction was obtained by Frankl and Wang \cite{FW2024}, who established an optimal bound on $m$ for which $(n, m) \rightarrow(4,13)$ holds.
Further information can be found in the book by Gerbner and Patkós \cite{GP2018}.

Another intriguing problem that has captured considerable attention in relation to the arrowing relation is the following one posed by Füredi and Pach \cite{FP1991}.
\begin{problem}[Füredi and Pach \cite{FP1991}]\label{1.1}
For a fixed non-negative integer $s$, find the largest $m(s)$ such that $(n, m) \rightarrow(n-1, m-s)$ holds for all $m \leq(m(s)-o(1)) n$.
\end{problem}
An exact version of this problem can be found in the book by Frankl and Tokushige \cite{FT2018}.
\begin{problem}[Frankl and Tokushige \cite{FT2018}, Problem 3.8]\label{1.2}
For any non-negative integers $n$ and $s$, determine or estimate the maximum value $m=m(n, s)$ such that $(n, m) \rightarrow(n-1, m-s).$
\end{problem}
It can be verified that $m(s)$ is well-defined, and in fact it satisfies $m(s)=\lim_{n\rightarrow +\infty}\frac{m(n,s)}{n}$ (see e.g. \cite{WF1994}).
Hence, in order to determine $m(s)$, it is sufficient to determine $m(n,s)$ for an infinite sequence of integers $n$.

The investigation of Problem~\ref{1.1} initially focused on determining $m(s)$ for small values of $s$. Bondy \cite{B1972} established that $m(0)=1$, while Bollob\'as \cite{L2007} determined $m(1)=\frac{3}{2}$.
Further progress was made by Frankl \cite{F1983}, who found that $m(2)=2$ and $m(3)=\frac{7}{3}$. Watanabe \cite{W1991} extended these results by determining $m(4)=\frac{17}{6}$, $m(5)=\frac{13}{4}$, and $m(6)=\frac{7}{2}$.
Continuing the exploration, Frankl and Watanabe \cite{WF1994} proved $m(9)=\frac{65}{14}$, while Watanabe \cite{W1995} subsequently determined $m(10)=5$ and $m(13)=\frac{29}{5}$.
A very recent result of Piga and Sch\"{u}lke \cite{PS2021} implies that $m(12)=\frac{28}{5}$.

Based on the developments outlined above, 
the values of $m(s)$ are known for all $s\leq 16$, except for $s = 11$. 
In 1994, Frankl and Watanabe \cite{WF1994} proposed the following conjecture (expressed in a different but equivalent notation).

\begin{conjecture}[Frankl and Watanabe \cite{WF1994}, Conjecture~3]\label{conj_for_m11}
$m(11)=5.3.$
\end{conjecture}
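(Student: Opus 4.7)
For $\mathcal{F}\subseteq 2^{[n]}$ and $v\in[n]$, set $d_v(\mathcal{F}):=|\{F\in\mathcal{F}:v\in F,\,F\setminus\{v\}\in\mathcal{F}\}|$, so that $|\mathcal{F}_{|[n]\setminus v}|=|\mathcal{F}|-d_v(\mathcal{F})$. The arrow $(n,m)\to(n-1,m-11)$ fails at a family of size exactly $m$ iff $d_v(\mathcal{F})\geqslant 12$ for every $v\in[n]$. So \cref{conj_for_m11} reduces to showing that for $n=10k$ the minimum size of a family with $d_v\geqslant 12$ everywhere is $53k+1$.

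\textbf{Upper bound $m(11)\leqslant 5.3$.} I would produce a block construction on $n=10k$ vertices. Partition $[10k]$ into $k$ blocks of size $2d=10$; on each block place a fixed family $\mathcal{G}\subseteq 2^{[10]}$ with $|\mathcal{G}|=53$ and $d_v(\mathcal{G})\geqslant 12$ for every $v\in[10]$, and glue across blocks through $\emptyset$. The parameters $53=2^6-2\cdot 5-1$ and block width $2d=10$ are those of the $c=d=5$ instance of the main theorem of the paper, specialised to a $d$ outside the $d\geqslant 50$ regime handled there. Since a naive skeleton $2^A\cup 2^B$ on a split $[10]=A\sqcup B$ with $|A|=|B|=5$ runs into edge-isoperimetric obstructions in $Q_5$ when trimmed to size $53$ (five ``removals'' on one half cannot cover all five directions simultaneously), I would augment by including sets that span both halves to supply the extra matching pairs, then verify the co-degree condition by hand or computer.

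\textbf{Lower bound $m(11)\geqslant 5.3$.} The hard direction asserts that every $\mathcal{F}\subseteq 2^{[n]}$ with $d_v(\mathcal{F})\geqslant 12$ for all $v$ has $|\mathcal{F}|\geqslant\tfrac{53}{10}n+1$. My approach is a weight / LP argument. Assign nonnegative weights $w(F)$ to each $F\in\mathcal{F}$ so that (i) $\sum_F w(F)\leqslant\tfrac{10}{53}|\mathcal{F}|$ by a set-wise bound independent of $\mathcal{F}$, and (ii) the constraints $d_v\geqslant 12$ force $\sum_F w(F)\geqslant n$. Chained, these give $|\mathcal{F}|\geqslant 5.3n$, with the ``$+1$'' recovered from a boundary term handling the global $\emptyset$. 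The weights should be supported on the matching edges of $\mathcal{F}$ inside the hypercube $Q_n$ and tuned so equality holds on the extremal block family. To reduce to a structured $\mathcal{F}$, I would first apply compressions (checking that $d_v\geqslant 12$ is preserved) and then induct on $n$, peeling off one block of $10$ vertices at a time; this matches how the $d\geqslant 50$ theorem is proved in the paper.

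\textbf{Main obstacle.} The decisive step is constructing weights that are simultaneously tight on the extremal block family and universally valid. Because $5.3$ carries a half-integer correction (the $-0.5$ in $\tfrac{2^d-d-0.5}{d}$), the extremal configuration is not symmetric across the two halves $A$ and $B$ of a block, so the weight function must be asymmetric as well, which rules out the cleanest vertex-symmetric LP and forces a more delicate calibration. I expect substantial case analysis over the local $\mathcal{F}$-patterns in a $10$-vertex neighbourhood, together with computer-verified base cases on small $n$ to close the inductive argument; this is precisely where the proof for small $d$ diverges from the general $d\geqslant 50$ analysis, since for $d=5$ the slack in the local LP is essentially zero.
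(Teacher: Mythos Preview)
Your reformulation is correct, and the upper bound is indeed the block construction $\mathcal{F}_0(n,5)$ of the paper (Construction~\ref{construction_c=d}): $2k$ blocks of size $5$, all subsets of size $\leqslant 3$ in each block, one $4$-set per block, and a perfect matching pairing the ``missing'' vertices across blocks. You should just write it down rather than leave it to verification.

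The lower bound sketch, however, has a real gap. Your plan is induction on $n$ by peeling off a $10$-vertex block, but in an arbitrary hereditary family with $\delta\geqslant 12$ there is no reason such a block exists or can be identified; the extremal structure is what you are trying to \emph{prove}, not assume. The paper does not induct. After reducing to hereditary $\mathcal{F}$ and observing that a minimal such family has no set of size $\geqslant 5$, it runs a direct vertex-weight argument: one shows $\sum_{x}u(x)=|\mathcal{F}|-1$ for a suitable weight $u(x)=\sum_{x\in F}\omega(x,F)$ and proves $u(x)\geqslant 5.3+\varepsilon(x)$ pointwise with $\sum_x\varepsilon(x)=0$.

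The two ingredients you are missing are precisely what makes this work. First, the weight $\omega(x,F)$ is \emph{not} $1/|F|$: on a $3$-set $F=\{x,y,z\}$ not covered by any $4$-set, the weights are shifted to $(\tfrac{7}{20},\tfrac{7}{20},\tfrac{6}{20})$ depending on which of the three pair co-degrees $d_\mathcal{F}(\{x,y\}),d_\mathcal{F}(\{y,z\}),d_\mathcal{F}(\{x,z\})$ exceed $4$ (Algorithm~\ref{alg}). This is the asymmetry you anticipated, but it lives on $3$-sets, not across the halves of a block. Second, there is a class of ``mini-weight'' vertices (those with link profile $f_1=4$, $f_2=5$, $f_3=2$) whose weight is exactly $5.3-\tfrac{2}{15}$; the deficit is repaid by a perturbation $\varepsilon(x)$ transferring $\tfrac{1}{15}$ through each containing $4$-set to its non-mini-weight members, and one checks $\sum_x\varepsilon(x)=0$. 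The heart of the proof (Lemma~\ref{keykey}) is a finite case analysis over the link profile $(f_1,f_2,f_3)$ showing $u(x)\geqslant 5.3+\varepsilon(x)$ always holds. No LP duality and no computer search are used; the calibration is by hand. Your proposal has the right shape but not these mechanisms, and without them the argument does not close.
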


After the initial investigations, subsequent explorations of this problem shift towards determining $m(n,s)$ for values of $s$ given by $s=2^{d-1}-c$ with $c\geq 0$.
The results of these explorations can be summarized in the following theorem.

\begin{theorem}\label{1.3}
 Let $d, n \in \mathbb{N}$. Then the following hold:
\begin{itemize}
     \item[(i)] For $d\geq2$, $d \mid n$ and $2\mid n$, $m(n, 2^{d-1}-0)=\frac{2^{d}-1}{d}n+\frac{n}{2}$.
     \item[(ii)] For $d\geq2$ and $d \mid n$, $m(n, 2^{d-1}-1)=\frac{2^{d}-1}{d}n$.
     \item[(iii)] For $d\geq3$ and $d \mid n$, $m(n, 2^{d-1}-2)=\frac{2^{d}-2}{d}n$.
     \item[(iv)] For $d\geq4$ and $d \mid n$, $m(n, 2^{d-1}-3)=\frac{2^{d}-3}{d}n$.
     \item[(v)] For $d\geq5$ and $d \mid n$, $m(n, 2^{d-1}-4)=\frac{2^{d}-4}{d}n$.
\end{itemize}
\end{theorem}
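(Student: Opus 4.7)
Items (i)--(iii) are classical (Frankl, Watanabe), and items (iv), (v) are due to Watanabe and to Piga--Sch\"{u}lke. My plan is to give a uniform treatment, splitting each case into (a) a construction showing $m(n,2^{d-1}-c)\leqslant\text{RHS}$, and (b) a pair-count inequality showing $m(n,2^{d-1}-c)\geqslant\text{RHS}$.

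For (a), partition $[n]$ into $n/d$ blocks $B_1,\dots,B_{n/d}$ of size $d$ and set $\mathcal{F}^{\star}=\bigcup_i \mathcal{F}_i$ with $\mathcal{F}_i\subseteq 2^{B_i}$. Take $\mathcal{F}_i = 2^{B_i}\setminus\{\emptyset\}$ for $c=1$, and additionally remove $c-1$ distinct singletons from each $\mathcal{F}_i$ for $c\in\{2,3,4\}$. A direct case-check gives $|\mathcal{F}^{\star}|=\tfrac{n}{d}(2^d-c)$ and shows that every $x\in B_i$ lies in exactly $2^{d-1}-c$ link pairs $\{F,F\cup\{x\}\}\subseteq\mathcal{F}_i$ when $\{x\}\in\mathcal{F}_i$, and in exactly $2^{d-1}-c+1$ such pairs otherwise. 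Now adjoin $\emptyset$: this raises $p_{\mathcal{F}}(x)$ by $1$ at each $x$ with $\{x\}\in\mathcal{F}^{\star}$, while elements with $\{x\}\notin\mathcal{F}^{\star}$ already had $p\geqslant 2^{d-1}-c+1$. The enlarged family therefore satisfies $p(x)\geqslant 2^{d-1}-c+1$ everywhere, so $(n,|\mathcal{F}^{\star}|+1)\not\to(n-1,|\mathcal{F}^{\star}|+1-(2^{d-1}-c))$, yielding $m(n,2^{d-1}-c)\leqslant|\mathcal{F}^{\star}|$. Case (i) uses the same template together with $n/2$ additional sets of the form $\{u,v\}$, with $u,v$ in distinct blocks arranged as a perfect matching of $[n]$, bringing every per-element pair count up to $2^{d-1}$.

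For (b), set $p_{\mathcal{F}}(x)=|\{F\in\mathcal{F}:x\in F,\,F\setminus\{x\}\in\mathcal{F}\}|$; exhibiting any $x$ with $p_{\mathcal{F}}(x)\leqslant 2^{d-1}-c$ completes the argument, since the trace loss at $x$ equals $p_{\mathcal{F}}(x)$. I would first apply compression (shifting), which reduces $\mathcal{F}$ to a down-closed family of the same size without increasing any $p_{\mathcal{F}}(x)$. For a down-closed family one has $\sum_{x\in[n]}p_{\mathcal{F}}(x)=\sum_{F\in\mathcal{F}}|F|$, so the task reduces to controlling the average size of members of $\mathcal{F}$; an inductive block decomposition with auxiliary element weights, in the spirit of Piga--Sch\"{u}lke, then delivers the required per-element bound at $|\mathcal{F}|=\tfrac{n}{d}(2^d-c)$.

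\textbf{Main obstacle.} The construction and the compression steps are routine, so the genuine difficulty lies in the post-compression extremal analysis. For larger $c$, one must rule out hybrid local configurations that mix several exceptional blocks, forcing a delicate discharging argument. Case (i) additionally requires a parity/pairing argument to justify the $n/2$ term, and case (v) already demanded substantially new ideas in the literature, so I expect it to be the longest and most delicate portion of the proof.
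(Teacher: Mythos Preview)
The paper does not actually prove Theorem~1.3: immediately after stating it, the authors attribute (ii) to Frankl, (i) and (iii) to Frankl--Watanabe, and (iv)--(v) to Piga--Sch\"{u}lke, and then move on. So there is no ``paper's own proof'' to compare against; the statement is background.

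That said, your plan is a fair reconstruction of the literature's approach. Your block construction for (ii)--(v) is exactly Construction~2.6 of the paper (written there for the paper's own Theorem~1.9), and your pair-count checks are correct. The paper also records the reduction to hereditary families (your ``compression'' step) as Lemma~2.1/Proposition~2.2. For the lower bound, your sentence ``the task reduces to controlling the average size of members of $\mathcal{F}$'' is a bit off target: bounding $\sum_{F}|F|$ alone is not what drives the argument. The actual Piga--Sch\"{u}lke proof (and the present paper's refinement of it) uses the vertex weight $\omega(x)=\sum_{x\in F}\tfrac{1}{|F|}$ together with Katona's generalisation of Kruskal--Katona applied to the \emph{link} $\mathcal{F}(x)$, not a global average-size bound. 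Your ``inductive block decomposition with auxiliary element weights'' is gesturing at the right machinery, but as written it is only a pointer, not a proof; you correctly flag this as the main obstacle. One attribution nit: (iv) and (v) are both Piga--Sch\"{u}lke, not Watanabe.
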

To provide proper attribution, it should be noted that Frankl \cite{F1983} proved (ii), while Frankl and Watanabe \cite{WF1994} proved (i) and (iii). 
Recently, Piga and Sch\"{u}lke \cite{PS2021} demonstrated (iv) and (v). 
Additionally, Piga and Sch\"{u}lke established a more general result, stated as follows.   

\begin{theorem}[Piga and Sch\"{u}lke \cite{PS2021}]\label{Pigatheorem}
Let $d, c, n \in \mathbb{N}$ with $c\leq \frac{d}{4}$. If $d \mid n$, then
$$
m(n, 2^{d-1}-c)=\frac{2^{d}-c}{d}n, \mbox{ which implies that } m(2^{d-1}-c)=\frac{2^{d}-c}{d}.
$$
\end{theorem}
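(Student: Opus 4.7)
The plan is to prove both the upper and lower bounds on $m(n,2^{d-1}-c)$. For the upper bound, I would partition $V=[n]$ into $n/d$ disjoint blocks $B_1,\ldots,B_{n/d}$ of size $d$ (valid since $d\mid n$), and on each block take a subfamily $\mathcal{F}_i \subseteq 2^{B_i}$ of size $2^d-c$ obtained by deleting $c$ carefully chosen subsets so that every $v\in B_i$ has exactly $2^{d-1}-c$ ``Hamming pairs'' in $\mathcal{F}_i$ (pairs of sets differing only in whether they contain $v$). Writing $\mathcal{F}^{\ast}=\bigcup_i \mathcal{F}_i$, with the empty set counted only once across blocks, gives $|\mathcal{F}^{\ast}|=\tfrac{2^d-c}{d}n$, and every $(n-1)$-trace $\mathcal{F}^{\ast}_{|V\setminus\{v\}}$ has size exactly $|\mathcal{F}^{\ast}|-(2^{d-1}-c)$. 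Adjoining one extra set chosen so that it does not enlarge any $(n-1)$-trace then yields a family of size $\tfrac{2^d-c}{d}n+1$ witnessing that the arrowing $(n,m)\to(n-1,m-(2^{d-1}-c))$ fails at $m=\tfrac{2^d-c}{d}n+1$, establishing $m(n,2^{d-1}-c)\leq \tfrac{2^d-c}{d}n$.

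For the matching lower bound (the arrowing itself), consider any $\mathcal{F}\subseteq 2^{[n]}$ with $|\mathcal{F}|\geq\tfrac{2^d-c}{d}n$; we must find $v\in[n]$ with $\ell(v)\leq 2^{d-1}-c$, where $\ell(v):=|\{F\in\mathcal{F}: v\notin F,\ F\cup\{v\}\in\mathcal{F}\}|$, since $|\mathcal{F}_{|V\setminus\{v\}}|=|\mathcal{F}|-\ell(v)$. The naive averaging $\min_v \ell(v)\leq E(\mathcal{F})/n$, where $E(\mathcal{F})$ is the number of Hamming edges inside $\mathcal{F}$, combined with Harper's edge-isoperimetric inequality $E(\mathcal{F})\leq \tfrac{1}{2}|\mathcal{F}|\log_2|\mathcal{F}|$, is too weak by a $\log_2 n$ factor, since $Ln$ can exceed $\tfrac{1}{2}|\mathcal{F}|\log_2 |\mathcal{F}|$ for $L=2^{d-1}-c+1$. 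Instead, the plan is to first apply the standard shifting operators $S_{ij}$ for $i<j$ and reduce to the left-shifted case; one checks that this reduction preserves $|\mathcal{F}|$ and does not worsen the extremal quantity we are trying to bound. In a left-shifted family, the shadow count at each coordinate can be related to more tractable coordinate-wise quantities, opening the door to a direct structural analysis.

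The crux is then to argue, under the contrary hypothesis $\ell(v)\geq 2^{d-1}-c+1$ for every $v$, that $|\mathcal{F}|<\tfrac{2^d-c}{d}n$. I would do this via a greedy block-extraction: iteratively select a block $B\subseteq V$ of $d$ coordinates whose ``local contribution'' to $|\mathcal{F}|$ is at most $2^d-c$, then remove $B$ from $V$ and the associated sets from $\mathcal{F}$, and recurse on the residual instance. Summing the bounds over the $n/d$ iterations gives $|\mathcal{F}|\leq \tfrac{2^d-c}{d}n-1$, the desired contradiction. The main obstacle is verifying that a valid block exists at every stage and that no set of $\mathcal{F}$ is over-counted across blocks. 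This is where the hypothesis $c\leq d/4$ enters crucially: it provides enough slack so that, after each extraction, the residual family still satisfies the shadow lower bound at every surviving coordinate, keeping the greedy procedure well-defined until all of $V$ is exhausted. Extending this argument to the wider regime $c\leq d-1$ that is the subject of the present paper requires replacing the greedy step by a more refined weighted accounting.
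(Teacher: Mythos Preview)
Your upper-bound construction is right in spirit, though the count is off by one: Construction~\ref{construction_c<d} removes $c-1$ (not $c$) sets from each block, so the union (with $\emptyset$ shared) already has size $\tfrac{2^d-c}{d}n+1$ and minimum degree $\geq 2^{d-1}-c+1$, and no extra set need be adjoined. The lower bound, however, has a genuine gap. The actual argument (Piga--Sch\"ulke's, refined in this paper) uses neither coordinate-shifts $S_{ij}$ nor edge-isoperimetry. It (i)~reduces via Frankl's lemma (Proposition~\ref{dengjiatiaojian}) to \emph{hereditary} families, where your $\ell(v)$ is simply the degree $d_{\mathcal F}(v)$; (ii)~assigns the weight $\omega(x)=\sum_{x\in F\in\mathcal F}1/|F|$, so that $\sum_x\omega(x)=|\mathcal F|-1$; and (iii)~lower-bounds each $\omega(x)$ by applying Katona's generalised Kruskal--Katona theorem (Theorem~\ref{katona}) to the link $\mathcal F(x)$, together with the explicit estimates of Lemmas~\ref{lem1} and~\ref{appoximation_lemma}. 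The hypothesis $c\leq d/4$ enters through a redistribution step over ``clusters'' (what this paper calls \emph{piles}): vertices with $|N(x)|=d$ may have weight slightly below $\tfrac{2^d-c}{d}$, and transferring surplus from large-neighbourhood vertices requires slack roughly $\tfrac12-\tfrac{c-1}{d-c}\geq\tfrac16$, which is exactly $c\leq d/4$.

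Your ``greedy block-extraction'' is not a proof as stated: you give no mechanism for locating a $d$-block whose local contribution is at most $2^d-c$, no reason the residual family retains the shadow lower bound after deletion, and no accounting that charges each set of $\mathcal F$ to exactly one block. Note also that the extremal families of Construction~\ref{construction_c<d} are unions of near-full cubes on \emph{disjoint} coordinate sets and are very far from left-shifted; applying $S_{ij}$ across blocks destroys precisely the block structure you hope to exploit, so shifting is working against you rather than for you.
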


Piga and Sch\"{u}lke \cite{PS2021} raised an intriguing problem regarding the optimal upper bound for $c$ in Theorem \ref{Pigatheorem}. 

\begin{problem}[Piga and Sch\"{u}lke \cite{PS2021}]\label{problem_for_c0}
For fixed $d$, find the maximum integer $c_0(d)$ such that for every $1\leq c \leq c_0(d)$, it holds that $$m(2^{d-1}-c)=\frac{2^{d}-c}{d}.$$
\end{problem}

In fact, they also provided a construction (see Construction~\ref{construction_c=d}) which shows $m(2^{d-1}-d)\leq \frac{2^d-d-\frac{1}{2}}{d}$. 
Consequently, it implies that the maximum $c_0(d)$ in Problem~\ref{problem_for_c0} satisfies $c_0(d)\leq d-1$.

\subsection{Our results}
In this paper, we address the aforementioned problems by introducing some novel concepts and utilizing advanced techniques.
We begin by presenting our first result, which confirms Conjecture \ref{conj_for_m11} posed by Frankl and Watanabe \cite{WF1994}.

\begin{theorem}\label{zhudingli_d=5}
    For $n \in \mathbb{N}$ with $10\mid n$, $$m(n, 11)=m(n, 2^{5-1}-5)=\frac{2^5-5-\frac{1}{2}}{5}n=5.3n.$$
Therefore $m(11)=5.3$.
\end{theorem}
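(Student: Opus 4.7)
The upper bound $m(n,11) \leq 5.3n$ follows from Construction~\ref{construction_c=d} with $d = c = 5$, which produces a family of size $5.3n + 1$ on $[n]$ whose up-degree $d_i(\mathcal{F}) := |\{F \in \mathcal{F} : i \notin F,\ F \cup \{i\} \in \mathcal{F}\}|$ is at least $12$ in every coordinate. The entire content of the theorem therefore lies in the matching lower bound, which asserts that every family $\mathcal{F} \subseteq 2^{[n]}$ with $|\mathcal{F}| \geq 5.3n$ admits some $i \in [n]$ with $d_i(\mathcal{F}) \leq |\mathcal{F}| - 5.3n + 11$, or equivalently
\[
|\mathcal{F}| - \min_{i \in [n]} d_i(\mathcal{F}) \;\geq\; 5.3n - 11,
\]
with equality attained by the Construction at $|\mathcal{F}| = 5.3n + 1$, $\min_i d_i = 12$.

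My plan is a shift-and-weight scheme tailored to the regime $c = d = 5$, refining the framework behind Piga and Sch\"{u}lke's Theorem~\ref{Pigatheorem}. First, apply standard $(i,j)$-shifting to replace $\mathcal{F}$ by a shifted family of the same size with each $d_i$ no smaller, so we may assume $\mathcal{F}$ is shifted. Second, using the divisibility $10 \mid n$, group the $n$ coordinates into $n/10$ adjacent pairs of $5$-blocks and associate to each $F \in \mathcal{F}$ a profile recording how $F$ sits inside its home block-pair and how it crosses between pairs. Third, define an in-weight $\win(F)$ and an out-weight $\wout(F)$ so that $\sum_F \bigl(\win(F) + \wout(F)\bigr)$ can be evaluated in two ways: locally, via a sharp per-block-pair cap of $53$ sets matching the Construction, valid for any shifted local configuration with all up-degrees at least $12$; and globally, via the hypothesis $d_i \geq 12$ applied coordinate by coordinate. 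Matching the two evaluations should yield $|\mathcal{F}| \leq 5.3n$ under $\min_i d_i \geq 12$, with a routine refinement handling larger $|\mathcal{F}|$ at a proportional concession in $\min_i d_i$.

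The principal obstacle, and the reason this case is excluded from Theorem~\ref{Pigatheorem}, is the half-integer correction $-\tfrac{1}{2}$ in the rate: the extremal family is assembled from paired $5$-blocks of total size $10$, not from isolated $5$-blocks. The per-$10$-block cap of $53$ sets cannot be derived block-by-block, since the naive per-$5$-block rate $2^d - c = 27$ integrates to $54$ per pair, half a set too many; the argument must therefore contain a pairing or discharging step that transfers slack between an over-saturated $5$-block and its partner. I expect the technical heart of the proof to be a finite classification of shifted subfamilies of $2^{[10]}$ with $\min_i d_i \geq 12$, verifying that no such configuration carries more than $53$ sets and pinning down precisely where the half-set of slack is lost. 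Once this local bound is in place, the global double-counting should collapse cleanly to the target $m(n,11) = 5.3n$ for every $n$ divisible by $10$.
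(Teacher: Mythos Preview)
Your proposal is a plan, not a proof, and the plan has a structural gap at exactly the point you flag as the ``technical heart.'' You impose an \emph{a priori} partition of $[n]$ into $10$-blocks and then hope to bound each block's contribution by $53$. But an arbitrary (even shifted, even hereditary) family with $\min_i d_i\geq 12$ has no reason to respect such a partition: sets may span many blocks, and the degree constraint at a coordinate in one block can be satisfied largely by sets living mostly in other blocks. Your own framework acknowledges cross-block sets via the ``out-weight,'' yet the proposed verification---``a finite classification of shifted subfamilies of $2^{[10]}$ with $\min_i d_i\geq 12$''---simply ignores them. Without a mechanism that either forces the family to localize or accounts for cross-block mass, the per-block cap of $53$ is an assertion, not a lemma; and the half-set of slack you need to recover per pair could just as well leak between pairs rather than within them.

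The paper's proof is organized entirely differently and avoids this trap. It first reduces to hereditary families (Proposition~\ref{dengjiatiaojian}), then works with a minimal such $\mathcal{F}$ and assigns to each vertex $x$ a weight $u(x)=\sum_{x\in F}\omega(x,F)$ where the edge-shares $\omega(x,F)$ are not uniform $1/|F|$ but are tuned on triples by Algorithm~\ref{alg}. The half-integer correction is handled by a discharging perturbation $\varepsilon(x)$ over $4$-sets: certain ``mini-weight'' vertices have $u(x)=5.3-\tfrac{2}{15}$, and the deficit is paid by their neighbours in the same $4$-set via $\varepsilon$, with $\sum_x\varepsilon(x)=0$. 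The key Lemma~\ref{keykey} shows $u(x)\geq 5.3+\varepsilon(x)$ by a finite case analysis of the \emph{link} $\mathcal{F}(x)$ (the values $f_1,f_2,f_3$), which is an intrinsic local object---no external block structure is imposed. The block structure of the extremal family emerges only \emph{after} the inequality, in the characterization of equality. If you want to rescue your approach, you would need to replace the arbitrary $10$-block partition by something intrinsic (e.g.\ neighbourhoods $N(x)$, which for bad vertices have size exactly $5$) and design your discharging around that; at which point you will have essentially rediscovered the paper's scheme.
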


Our second result determines the exact values of $m(2^{d-1}-c)$ for all $1\leq c\leq d$ when $d\geq 50$. 
Moreover, it extends the statement of Theorem~\ref{Pigatheorem} to the optimal range of $c$.

\begin{theorem}\label{zhudingli}
Let $d, n \in \mathbb{N}$ with $d\geq 50$. Then the following hold: 
\begin{itemize}
\item For $2d \mid n$, we have 
$$m(n, 2^{d-1}-d)=\frac{2^d-d-\frac{1}{2}}{d}n, \mbox{ which implies that } m(2^{d-1}-d)=\frac{2^{d}-d-\frac12}{d}.$$
\item For all $1\leq c\leq d-1$ and $d \mid n$, we have
$$m(n, 2^{d-1}-c)=\frac{2^{d}-c}{d}n, \mbox{ which implies that } m(2^{d-1}-c)=\frac{2^{d}-c}{d}.$$
    \end{itemize}
\end{theorem}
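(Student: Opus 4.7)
The plan is to establish the upper bound via an explicit construction and the lower bound via a structural argument; the lower bound is the main technical content.

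\textbf{Upper bound.} For $c = d$, the sharper bound $\frac{(2^d - d - \tfrac12)n}{d}$ is already delivered by Construction~\ref{construction_c=d} of Piga and Sch\"ulke, which fuses pairs of $d$-blocks into $2d$-super-blocks to extract an extra half-unit saving; no new work is needed here. For $1 \leq c \leq d - 1$, I would use the block construction: partition $[n] = B_1 \sqcup \dots \sqcup B_{n/d}$ into blocks of size $d$, and in each block $B_j$ delete a family $\mathcal{R}_j \subseteq 2^{B_j} \setminus \{\emptyset\}$ of $c - 1$ sets forming an \emph{independent set} in the Hasse diagram of $2^{B_j}$ (no two sets at Hamming distance $1$). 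Such $\mathcal{R}_j$ exists for $c - 1 \leq 2^{d-1}$, automatic for $d \geq 50$. The resulting family $\mathcal{F} = \bigcup_j (2^{B_j} \setminus \mathcal{R}_j)$ satisfies $|\mathcal{F}| = \frac{(2^d - c)n}{d} + 1$ (with $\emptyset$ counted once) and uniform single-vertex loss $2^{d-1} - (c - 1) = s + 1$, certifying $(n, |\mathcal{F}|) \not\to (n-1, |\mathcal{F}| - s)$ and hence $m(n, 2^{d-1} - c) \leq \frac{(2^d - c)n}{d}$.

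\textbf{Lower bound.} Let $M$ denote the theorem's target value. The critical case is $|\mathcal{F}| = M$, where I must produce some $v$ with $\mathrm{loss}(v) := |\{F \in \mathcal{F} : v \in F,\, F \setminus \{v\} \in \mathcal{F}\}| \leq s$ (the bound relaxes by slack for larger $|\mathcal{F}|$). I would first compress $\mathcal{F}$ to a shifted family, preserving $|\mathcal{F}|$ and leaving all single-vertex trace sizes non-increasing. The easy reduction: if some vertex $v$ has $\mathrm{deg}(v) \leq s$, then $\mathrm{loss}(v) \leq \mathrm{deg}(v) \leq s$ and we are done; so I assume $\mathrm{deg}(v) \geq s + 1$ at every $v$. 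Introduce a \emph{dense block}: a $d$-subset $B \subseteq [n]$ with $|\mathcal{F} \cap 2^B| \geq 2^d - c$. The structural lemma I aim to establish is: under the standing assumptions every vertex lies in a dense block, distinct dense blocks are pairwise disjoint, and the $n/d$ resulting blocks partition $[n]$ with shapes exactly matching the upper-bound block construction; a direct accounting then forces $|\mathcal{F}| < M$ (respectively $< \frac{(2^d - d - \tfrac12)n}{d}$ for $c = d$), the desired contradiction.

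\textbf{Main obstacle.} The structural lemma is the heart of the proof. Local analysis at a vertex $v$ with $\mathrm{deg}(v) \geq s + 1 \geq 2^{d-1} - d + 1$ must exploit the shifted structure of $\mathcal{F}$ together with Kruskal--Katona-type shadow bounds to force a $d$-dimensional Boolean subcube through $v$. Ruling out significant overlap between candidate dense blocks needs a delicate near-extremal counting argument. For the $c = d$ case, single-block accounting alone only yields the weaker bound $\frac{n(2^d - d)}{d}$; extracting the extra $\tfrac12$ requires a parity/fractional refinement showing that dense blocks must fuse into $2d$-super-blocks contributing a half-unit deficit each, matching Construction~\ref{construction_c=d} exactly. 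The hypothesis $d \geq 50$ enters here to absorb $O(1)$ error terms against the $\tfrac12$ gap; the small-$d$ case $d = 5$ in the companion Theorem~\ref{zhudingli_d=5} requires separate ad hoc treatment.
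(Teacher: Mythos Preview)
Your upper bound matches the paper's (the independent-set condition on the deleted sets is unnecessary—any $c-1$ sets work, and choosing them as a filter keeps the family hereditary). Your compression step, read as down-compression to a hereditary family, is exactly the paper's reduction via Lemma~\ref{2.1} and Proposition~\ref{dengjiatiaojian}.

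The gap is the structural lemma. You state as a goal that every vertex lies in a dense $d$-block, that these blocks are pairwise disjoint, and that they tile $[n]$ matching the extremal construction—and you acknowledge this as the main obstacle, but you offer nothing toward proving it beyond naming Kruskal--Katona. The paper neither proves nor needs such a characterization. Hereditary families with $\delta(\mathcal F)\ge 2^{d-1}-c+1$ can genuinely have overlapping dense regions and vertices whose neighbourhood exceeds size $d$; ruling these out is not a preliminary step but is essentially the whole theorem. The paper sidesteps this with a weight argument: set $\omega(x)=\sum_{x\in F\in\mathcal F}|F|^{-1}$, so that $\sum_x\omega(x)=|\mathcal F|-1$, and prove $\sum_x\omega(x)\ge\mathfrak B_c\, n$. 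The key tool you are missing is Katona's weighted Kruskal--Katona theorem (Theorem~\ref{katona}) applied to the link $\mathcal F(x)$: since $|\mathcal F(x)|\ge 2^{d-1}-c+1$, one obtains $\omega(x)>\mathfrak B_c-\tfrac{1}{18}$ together with a bonus of $\tfrac16(|N(x)|-d)$ whenever $|N(x)|>d$ (Lemma~\ref{weight_lemma}). Vertices are then split into three classes—outside all piles, in intersecting piles, in isolated piles—and only the last class, where disjointness holds \emph{by definition}, needs a fine internal analysis (Theorem~\ref{auxiliary_theorem} via Lemma~\ref{few_good_vertex_theorem}).

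For $c=d$, your super-block fusing picture is the extremal \emph{construction}, but it is not how the proof extracts the extra $\tfrac12$. The argument is local to each isolated pile: one shows the pile must contain at least one good vertex (a vertex with a neighbour outside the pile), and its single external $2$-edge already contributes external weight $\ge\tfrac12$; this half-unit per pile is precisely the correction in $\mathfrak B_d=(2^d-d-\tfrac12)/d$. No pairing or parity argument is involved. The hypothesis $d\ge 50$ is used only to make the Katona lower bound clear $\mathfrak B_c-\tfrac{1}{18}$ (Appendix~\ref{compute}) and to absorb constants in the isolated-pile analysis, not to enable any fusing of blocks.
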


We point out that in the first case that $c=d$, essentially we can characterize all extremal families (see the statement of Theorem~\ref{thm000} and the explanations before Subsection~\ref{subsec:iso-pile}).  
As a corollary, Theorem~\ref{zhudingli} provides a precise answer to Problem \ref{problem_for_c0} for $d\geq 50$.

\begin{corollary}
For $d\geq 50$, it holds that $c_0(d)=d-1$.
\end{corollary}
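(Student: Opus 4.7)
The plan is to derive the corollary directly from Theorem~\ref{zhudingli} together with the upper bound $c_0(d) \leq d-1$ already noted in the excerpt. First, I would invoke the second bullet of Theorem~\ref{zhudingli}, which for $d \geq 50$ and every $1 \leq c \leq d-1$ yields $m(2^{d-1}-c) = \frac{2^d-c}{d}$. By the definition of $c_0(d)$ in Problem~\ref{problem_for_c0}, this immediately gives $c_0(d) \geq d-1$.

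For the matching upper bound $c_0(d) \leq d-1$, I would appeal to the Piga--Sch\"ulke construction mentioned just after Problem~\ref{problem_for_c0}, which witnesses
\[
m(2^{d-1}-d) \leq \frac{2^d-d-\tfrac{1}{2}}{d} < \frac{2^d-d}{d}.
\]
Hence $m(2^{d-1}-d) \neq \frac{2^d-d}{d}$, so $c = d$ violates the condition defining $c_0(d)$, which forces $c_0(d) < d$. Combining the two bounds yields $c_0(d) = d-1$.

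Since the entire substance of the corollary is already packaged inside Theorem~\ref{zhudingli} and the cited construction of Piga and Sch\"ulke, there is no genuine obstacle: the argument reduces to the trivial strict inequality $\tfrac{2^d-d-1/2}{d} < \tfrac{2^d-d}{d}$, and no further estimate or combinatorial input is needed.
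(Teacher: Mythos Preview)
Your proposal is correct and takes essentially the same approach as the paper, which states the corollary without explicit proof but makes clear it follows from Theorem~\ref{zhudingli} together with the already-noted bound $c_0(d)\le d-1$ from the Piga--Sch\"ulke construction. Your two-line derivation (the second bullet of Theorem~\ref{zhudingli} gives $c_0(d)\ge d-1$, and the strict inequality $\frac{2^d-d-1/2}{d}<\frac{2^d-d}{d}$ rules out $c=d$) is exactly what the paper intends.
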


\boldsubsection{Structure of the paper}
This paper is organized as follows.
Section 2 provides the necessary preliminaries, including the introduction of notations, concepts, constructions, and several useful lemmas.
In Section 3, we provide a detailed proof of Theorem \ref{zhudingli_d=5}.
Section 4 presents a reduction that demonstrates how to derive Theorem \ref{zhudingli} from Theorem \ref{auxiliary_theorem}.
Section 5 is dedicated to the proof of Theorem~\ref{auxiliary_theorem}.
Lastly, in Section 6, we discuss some open problems.

\boldsubsection{Acknowledgement}
J. M. was supported in part by National Key Research and Development Program of China 2023YFA1010201 and National Natural Science Foundation of China grant 12125106.

\section{Preliminaries}

\subsection{Notation and hereditary family}

In this paper, we adopt the standard notation. 
We define $[n]$ as the set $\{1, 2, \ldots, n\}$, and $[n]_0$ as the set $\{0, 1, 2, \ldots, n\}$. 
Additionally, we denote $\mathbb{N}$ as the set of non-negative integers and $\mathbb{N}^{+}$ as the set of positive integers.
Unless otherwise specified, all logarithms are base $2$. 

The problems we discuss here can be formulated using both the languages of set theory and hypergraph theory.
Therefore, we will treat the family $\mathcal{F} \subseteq 2^{V}$ and the hypergraph $(V, \mathcal{F})$ as interchangeable without distinction.
Let $V$ be a finite set. For a family $\F \subseteq 2^{V}$ and $x\in V$, we define the \emph{link} $\F(x)$ of $x$ to be $\F(x)=\{F\setminus \{x\}:x\in F\in \F\}$. 
We define the \emph{degree} of $x$ in $\F$ to be $d_{\F}(x)=|\F(x)|$ and the \emph{minimum degree} of $\F$ to be $\delta(\F)=\min \{d_{\F}(x):x\in V\}$. 
The \emph{neighborhood} of $x$ means $N(x)=\bigcup_{x\in F\in \F}F$. 
Note that if $x$ is non-isolated (i.e., $d_\mathcal{F}(x)\geq 1$), then $x\in N(x)$. 
For $\mathcal{F^{\prime}}\subseteq \mathcal{F}$, $\mathcal{F}\setminus \mathcal{F^{\prime}}$ denotes the subfamily obtained from $\mathcal{F}$ by deleting every set $F\in \mathcal{F^{\prime}}$.

A family $\mathcal{F}$ is \emph{hereditary} if for every $F^{\prime} \subseteq F \in \mathcal{F}$, it holds that $F^{\prime} \in \mathcal{F}$. 
A classic result of Frankl \cite{F1983} implies that for fixed $a$ and $m$, the minimum of $\max_{T\in \binom{V}{a}} \left|\mathcal{F}_{\mid T}\right|$ over all families $\mathcal{F}\subseteq 2^V$ of size $m$ is achieved by hereditary families. 
Thus, the problems on arrow relation considered in this paper can be reduced to hereditary families. 
We can summarize this as the following lemma.

\begin{lemma}[\cite{F1983}]\label{2.1}
The following statements are equivalent for positive integers $n$, $m$, $a$ and $b$:
\begin{itemize}
    \item $(n,m)\to (a,b)$.
    \item For every $n$-set $V$ and every hereditary family $\mathcal{F} \subseteq 2^{V}$ with $|\mathcal{F}|=m$, there exists $T\subseteq V$ with $|T|=a$ such that $|\mathcal{F}_{|T}|\ge b$. 
\end{itemize}
\end{lemma}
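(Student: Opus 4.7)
The plan is to prove the nontrivial direction by a standard down-shift (compression) argument; the converse implication is immediate since every hereditary family is in particular a family. Concretely, I would construct, from any $\mathcal{F}\subseteq 2^{V}$ with $|\mathcal{F}|\geq m$, a hereditary family $\mathcal{G}\subseteq 2^{V}$ of size exactly $m$ satisfying $|\mathcal{G}_{|T}|\leq|\mathcal{F}_{|T}|$ for every $T\subseteq V$. Applying the hypothesis to $\mathcal{G}$ then yields a set $T$ with $|T|=a$ and $|\mathcal{G}_{|T}|\geq b$, which transfers to $|\mathcal{F}_{|T}|\geq b$ via the trace inequality.

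First I would pass to an arbitrary subfamily $\mathcal{F}_0\subseteq \mathcal{F}$ with $|\mathcal{F}_0|=m$, so that $|(\mathcal{F}_0)_{|T}|\leq|\mathcal{F}_{|T}|$ holds trivially. For each $x\in V$ I define the down-shift operator $D_x$ by the rule: for every $F\in\mathcal{F}_0$ with $x\in F$ and $F\setminus\{x\}\notin\mathcal{F}_0$, replace $F$ by $F\setminus\{x\}$, and leave all other sets unchanged. The three properties to verify are: (a) $|D_x(\mathcal{F}_0)|=|\mathcal{F}_0|$, which follows because distinct replacements cannot coincide; (b) $|D_x(\mathcal{F}_0)_{|T}|\leq|(\mathcal{F}_0)_{|T}|$ for every $T\subseteq V$; and (c) the quantity $\sum_{F}|F|$ strictly decreases whenever $D_x$ acts nontrivially.

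Since (c) provides a nonnegative integer monovariant, iterating the $D_x$ over $x\in V$ in any order terminates at a family $\mathcal{G}$ of size $m$ fixed by every $D_x$. Being a fixed point of $D_x$ for all $x$ is exactly the statement that $F\setminus\{x\}\in\mathcal{G}$ whenever $F\in\mathcal{G}$ and $x\in F$, i.e., that $\mathcal{G}$ is hereditary. The chain $|\mathcal{F}_{|T}|\geq|(\mathcal{F}_0)_{|T}|\geq|\mathcal{G}_{|T}|\geq b$ then completes the reduction.

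The main obstacle, and the only genuinely nontrivial step, is verifying property (b). The case $x\notin T$ is immediate since the map $F\mapsto F\cap T$ is unaffected by the shift. For $x\in T$, I would show that the trace map applied to $D_x(\mathcal{F}_0)$ produces only traces that already appear on $\mathcal{F}_0$, via a short case analysis on which members of the pair $\{F,F\setminus\{x\}\}$ belong to $\mathcal{F}_0$ before and after the shift. This is the classical shifting-preserves-trace lemma underlying Sauer--Shelah-type arguments, so while it is the substantive ingredient here, it is by now completely routine, and I would expect the whole proof to take well under a page.
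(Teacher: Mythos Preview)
The paper does not give its own proof of this lemma; it simply cites Frankl's 1983 paper and treats the result as a black box. Your down-shift (compression) argument is precisely the classical one underlying Frankl's result, and the overall plan --- pass to a subfamily of size $m$, iterate the shifts $D_x$ until a hereditary family is reached, and use that traces can only shrink --- is correct.

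One small inaccuracy in your description of step~(b) for $x\in T$: the literal containment $D_x(\mathcal{F}_0)_{|T}\subseteq(\mathcal{F}_0)_{|T}$ that you announce is false in general. For instance, with $\mathcal{F}_0=\{\{x\}\}$ and $T=\{x\}$ one gets $D_x(\mathcal{F}_0)_{|T}=\{\emptyset\}\not\subseteq\{\{x\}\}=(\mathcal{F}_0)_{|T}$. What one actually proves is either the containment
\[
(D_x\mathcal{F}_0)_{|T}\ \subseteq\ D_x\bigl((\mathcal{F}_0)_{|T}\bigr),
\]
which combined with $|D_x(\cdot)|=|\cdot|$ yields the size inequality, or equivalently an explicit injection $(D_x\mathcal{F}_0)_{|T}\hookrightarrow(\mathcal{F}_0)_{|T}$ sending $S$ to $S$ when possible and to $S\cup\{x\}$ otherwise. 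Injectivity uses that any $B\in D_x(\mathcal{F}_0)$ with $x\in B$ must satisfy $B\setminus\{x\}\in\mathcal{F}_0$ as well. Your remark about ``a case analysis on which members of the pair $\{F,F\setminus\{x\}\}$ belong to $\mathcal{F}_0$'' suggests you have the right mechanism in mind; just be careful not to overclaim the set-theoretic containment. Beyond this wording issue, the argument is complete.
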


A simple yet important observation is that for any hereditary family $\mathcal{F}\subseteq 2^{V}$ and $x\in V$, we have $|\mathcal{F}_{\mid V\setminus \{x\}}|=|\mathcal{F}|-d_{\mathcal{F}}(x)$. 
By combining this observation with Lemma~\ref{2.1}, we obtain a useful proposition that will be frequently utilized.

\begin{proposition}\label{dengjiatiaojian}
For positive integers $n$, $m$ and $s$, the following statements are all equivalent:
\begin{itemize}
\item $(n, m) \rightarrow(n-1, m-s)$.
\item $m\leqslant m(n,s)$. 
\item For any hereditary family $\mathcal{F}\subseteq 2^{[n]}$ with $|\mathcal{F}|\leqslant m$, there exists $x_{0}\in [n]$ such that $d_{\mathcal{F}}(x_0)\leqslant s$. 
\item For any hereditary family $\mathcal{F}\subseteq 2^{[n]}$ with $\delta(\mathcal{F})\geqslant s+1$, we have $|\mathcal{F}|\geqslant m+1$. 
\end{itemize}
\end{proposition}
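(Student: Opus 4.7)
The plan is to verify the four statements form a short chain of routine equivalences anchored by \cref{2.1}. I first observe that (3) and (4) are literal contrapositives of one another (``$|\mathcal{F}|\leq m$ implies $\delta(\mathcal{F})\leq s$'' versus ``$\delta(\mathcal{F})\geq s+1$ implies $|\mathcal{F}|\geq m+1$''), so their equivalence is immediate. It then suffices to establish (1) $\Leftrightarrow$ (3) and (1) $\Leftrightarrow$ (2).

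For (1) $\Leftrightarrow$ (3), I would invoke \cref{2.1} to reduce the arrow relation to hereditary families, and use the identity $|\mathcal{F}_{|[n]\setminus\{x\}}|=|\mathcal{F}|-d_{\mathcal{F}}(x)$ valid for hereditary $\mathcal{F}$, so that the arrow property amounts to the existence of an element of small degree. At the critical size $|\mathcal{F}|=m$, the arrow forces some $d_{\mathcal{F}}(x_0)\leq s$, which is exactly (3) at that size; and for a hereditary family of size $<m$ I grow it to size $m$ by iteratively adjoining minimal non-members (an operation which preserves hereditariness), noting that degrees only increase under such enlargements, so a small-degree vertex in the larger family is also small-degree in the original. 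Conversely, to derive (1) from (3), I take a hereditary $\mathcal{F}$ of size $\geq m$ and peel off maximal sets one at a time (preserving hereditariness) until the resulting family $\mathcal{F}'$ has size exactly $m$; then (3) supplies an $x_0$ with $d_{\mathcal{F}'}(x_0)\leq s$, and each peel decreases $d(x_0)$ by at most one, giving $d_{\mathcal{F}}(x_0)\leq s+(|\mathcal{F}|-m)$, whence $|\mathcal{F}_{|[n]\setminus\{x_0\}}|=|\mathcal{F}|-d_{\mathcal{F}}(x_0)\geq m-s$ as required.

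Finally, (1) $\Leftrightarrow$ (2) is the definition of $m(n,s)$ as the maximum $m$ with the arrow property, together with monotonicity: if $(n,m)\to(n-1,m-s)$ then $(n,m')\to(n-1,m'-s)$ for all $m'\leq m$. This monotonicity is transparent through (3), since the hypothesis ``$|\mathcal{F}|\leq m$'' is downward closed in $m$. The only subtle point throughout is the size-direction asymmetry between (1), which concerns families of size $\geq m$, and (3)/(4), which concern families of size $\leq m$; the peeling/adjoining arguments above are the standard device for bridging this asymmetry while keeping $\mathcal{F}$ hereditary, and I do not anticipate any further obstacle.
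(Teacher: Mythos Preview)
Your proposal is correct and follows precisely the approach the paper indicates: the paper does not spell out a proof but simply states that the proposition follows by combining \cref{2.1} with the identity $|\mathcal{F}_{\mid V\setminus\{x\}}|=|\mathcal{F}|-d_{\mathcal{F}}(x)$ for hereditary $\mathcal{F}$, and your argument unpacks exactly this. One minor redundancy: since \cref{2.1} already reduces (1) to hereditary families of size \emph{exactly} $m$, your peeling step in the (3)$\Rightarrow$(1) direction is unnecessary (just apply (3) at size $m$ directly), though it does no harm.
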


\subsection{Constructions}
In this subsection, we provide constructions for two types of set families that serve as upper bounds for Theorems~\ref{zhudingli_d=5} and \ref{zhudingli}.
The first construction follows a natural approach.

\begin{constr}\label{construction_c<d}
Let $d\ge 5$, $1\leq c\leqslant d-1$ and $n=dk$, where $k$ is a positive integer. 
Let $V$ be a set of size $n$,
and let $U_{1},...,U_{k}$ form a partition of $V$ into sets of size $d$.
For every $i\in [k]$, arbitrarily pick a family $\mathcal{G}_{i}\subseteq 2^{U_{i}}$ with $|\mathcal{G}_{i}|=c-1$. 
Define
\[
\mathcal{F}(n,d,c) := \left\{ F\subseteq V : F\in 2^{U_{i}}\setminus \mathcal{G}_{i} \mbox{ for some $i\in [k]$}\right\}. 
\]
\end{constr}

For this construction, it is easy to check that $|\mathcal{F}(n,d,c)|=\frac{n}{d}(2^{d}-c)+1$ and $\delta(\mathcal{F}(n,d,c))\geqslant 2^{d-1}-c+1$. 
By Proposition~\ref{dengjiatiaojian},
this implies that
\[
m(n,2^{d-1}-c)\leqslant \frac{2^{d}-c}{d}n \mbox{ for any $d\geq 5, ~ d\mid n$ and $1\leq c\leqslant d-1$.} 
\]
In particular, we see that the upper bound of the second part of Theorem~\ref{zhudingli} holds. 

The second construction was given in \cite{PS2021},
which deals with the case when $c=d$.

\begin{constr}[\cite{PS2021}]\label{construction_c=d}
Let $d\ge 5$ and $n=2dk$, where $k$ is a positive integer. 
We define the family $\mathcal{F}_{0}(n,d)$ as follows. 
For $i\in [2k]$, we set $U_{i}=\{1+d(i-1),2+d(i-1),\dots,di\}$. Then $U_{1},...,U_{2k}$ provide a partition of $[n]$ into sets of size $d$. 
Define $\mathcal{F}_{0}(n,d):=\mathcal{G}\cup \mathcal{H}\cup \mathcal{I}$, where
\begin{align}
&\mathcal{G} = \left \{ S\subseteq V \mbox{: there exists some $i\in [2k]$ such that $S\subseteq U_{i}$ and $|S|\le d-2$} \right \} \notag \\
&\mathcal{H}=\left \{ U_{i}\setminus \left \{ di \right \} \mbox{: for $i\in [2k] $}  \right \} \notag \\
&\mathcal{I}=\left \{ \left \{ di,d(i+1) \right \} \mbox{: for $i\in \left \{ 1,3,5,...,2k-1 \right \} $}  \right \}.\notag 
\end{align}
\end{constr}

One can check that the number of edges of the family $\mathcal{F}_{0}(n,d)$ is given by
\begin{equation}
|\mathcal{F}_{0}(n,d)|=|\mathcal{G}|+|\mathcal{H}|+|\mathcal{I}|=\left(\frac{2^{d}-d-2}{d} n+1\right)+\frac{n}{d} +\frac{n}{2d} =\frac{2^{d}-d-\frac{1}{2} }{d} n+1.\notag
\end{equation}
Moreover, every vertex in $V$ has degree $2^{d-1}-d+1$. 
By Proposition~\ref{dengjiatiaojian},
this implies 
\[
m(n,2^{d-1}-d)\le \frac{2^{d}-d-\frac{1}{2} }{d}n 
\mbox{ for any $d\geq 5$ and $2d\mid n$. }
\]
Therefore, the upper bounds stated in Theorem~\ref{zhudingli_d=5} and the first part of Theorem~\ref{zhudingli} are satisfied.

\subsection{Colexicographic order}

For two finite sets $A,B\subseteq \mathbb{N}^{+}$, we say that $A\prec_{col}B$ or $A$ precedes $B$ in the {\it colexicographic order} if $\max(A\bigtriangleup B)\in B$. 
For $m\in \mathbb{N}$, we define $\mathcal{R}(m)$ to be the family containing the first $m$ finite subsets of $\mathbb{N}^{+}$ according to the colexicographic order. In particular, $\mathcal{R}(0)=\emptyset $. The following theorem due to Katona \cite{K1978} is a generalisation of the well-known Kruskal-Katona theorem. 

\begin{theorem}
[\cite{K1978}]
Let $f:\mathbb{N}\to \mathbb{R}$ be a monotone non-increasing function and let $\mathcal{F}$ be a hereditary family with $|\mathcal{F}|=m$. Then 
\begin{equation}
\sum_{F\in \mathcal{F}}f(|F|)\ge \sum_{R\in \mathcal{R}(m)}f(|R|).\notag
\end{equation}
\label{katona}
\end{theorem}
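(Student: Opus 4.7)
My plan is to reduce the weighted inequality to a pointwise comparison of level profiles and then invoke the Kruskal--Katona theorem. The guiding heuristic is that the colex initial segment $\mathcal{R}(m)$ is, among hereditary families of size $m$, the one that concentrates as much mass as possible on the higher levels; after a summation by parts the non-increasing weight $f$ only records this level-wise comparison, so the statement reduces to an entirely combinatorial assertion about level sizes.

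For the reduction, I would choose $N$ such that every set of $\mathcal{F}\cup\mathcal{R}(m)$ has size at most $N$, and use the identity $f(i)=f(N)+\sum_{j=i}^{N-1}\bigl(f(j)-f(j+1)\bigr)$ to rewrite
\[
\sum_{F\in\mathcal{F}}f(|F|)-\sum_{R\in\mathcal{R}(m)}f(|R|)=\sum_{j=0}^{N-1}\bigl(f(j)-f(j+1)\bigr)\Bigl(|\mathcal{F}^{\leq j}|-|\mathcal{R}(m)^{\leq j}|\Bigr),
\]
where $\mathcal{F}^{\leq j}:=\{F\in\mathcal{F}:|F|\leq j\}$. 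Since $f$ is non-increasing, each coefficient $f(j)-f(j+1)$ is non-negative, so it suffices to prove the pointwise inequality $|\mathcal{F}^{\leq j}|\geq |\mathcal{R}(m)^{\leq j}|$, equivalently $|\mathcal{F}^{>j}|\leq |\mathcal{R}(m)^{>j}|$, for every $j\geq 0$.

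For this pointwise inequality I would write $a_i=|\mathcal{F}_i|$ and $r_i=|\mathcal{R}(m)_i|$. Because $\mathcal{F}$ is hereditary, its levels satisfy the cascaded shadow bound $a_{i-1}\geq |\partial\mathcal{F}_i|\geq \kappa(a_i,i)$, where $\kappa(\cdot,i)$ is the Kruskal--Katona shadow function, and by construction the sequence $(r_i)$ is precisely the one that saturates this cascade at every step. I would then apply left-compressions to $\mathcal{F}$; these standard operations preserve both the hereditary property and the level sequence $(a_i)$, while forcing every level $\mathcal{F}_i$ to become a colex initial segment of $\binom{\mathbb{N}^+}{i}$, so that the cascade inequalities tighten. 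A downward induction on $j$ then compares the top-profile sums: given the induction hypothesis $\sum_{i>j+1}a_i\leq \sum_{i>j+1}r_i$, the extremality of $(r_i)$ in the Kruskal--Katona cascade together with the common total $\sum_i a_i=\sum_i r_i=m$ forces $\sum_{i>j}a_i\leq \sum_{i>j}r_i$, closing the induction.

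The main obstacle is exactly this final step. The Kruskal--Katona bound $\kappa(a_i,i)$ is non-linear in $a_i$ and the cascade propagates constraints between levels in a rather rigid way, so propagating the inequality from $\sum_{i>j+1}$ down to $\sum_{i>j}$ is not formal: redistributing mass between adjacent levels changes $\kappa$ in opposite directions and must be reconciled with the global budget $m$. A clean way to manage this, which I expect to be where most of the bookkeeping lives, is to work directly with the canonical cascade representation of $m$ that defines the colex order, and argue that $(r_i)$ is the unique hereditary level sequence of total $m$ that maximizes every top-profile partial sum.
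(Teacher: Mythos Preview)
The paper does not give a proof of this statement; it is quoted from Katona and used as a black box. (In fact the paper later \emph{applies} the theorem with the step function $f=\mathbf{1}_{\{0,\dots,s\}}$ to extract exactly the inequality $\sum_{i\le s}h_i\ge\sum_{i\le s}r_i$ that you isolate.) So there is no in-paper argument to compare against; what follows concerns only the soundness of your plan.

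Your Abel-summation step is correct, but it is a reformulation rather than a reduction: the inequality $|\mathcal{F}^{\le j}|\ge|\mathcal{R}(m)^{\le j}|$ is precisely the special case $f=\mathbf{1}_{\{0,\dots,j\}}$ of the theorem, and every non-increasing $f$ is a non-negative combination of such indicators. After this step you are left with a statement of the same strength as the one you started with.

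The remaining part is where the gap lies. After left-compression every level $\mathcal{F}_i$ is a colex initial segment and the hereditary property gives the cascade $a_{i}\ge\kappa(a_{i+1},i+1)$; but these are \emph{lower} bounds on $a_i$ in terms of $a_{i+1}$, which is the wrong direction for a downward induction on the tail sums. Concretely, from $\sum_{i>j+1}a_i\le\sum_{i>j+1}r_i$ together with the cascade and the common total one cannot formally deduce $\sum_{i>j}a_i\le\sum_{i>j}r_i$, because the per-level comparison is not monotone: for $\mathcal{F}=\{\emptyset,\{1\},\{2\},\{3\}\}$ versus $\mathcal{R}(4)=\{\emptyset,\{1\},\{2\},\{1,2\}\}$ one has $a_2=0<1=r_2$ while $a_1=3>2=r_1$, so the slack has to be tracked globally across all levels, not passed down one level at a time. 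Your closing sentence---that $(r_i)$ is the unique hereditary level sequence of total $m$ maximizing every top-profile partial sum---is exactly the assertion to be proved, not a lemma you can invoke to prove it. The plan correctly identifies the target, but the combinatorial argument that establishes it (which is the full content of Katona's theorem) is not supplied.
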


When applying Theorem \ref{katona} in our proofs, 
we often use $\F$ as the link of a vertex, and the function $f$ is commonly chosen as $f(k)=\frac{1}{k+1}$. 
For convenience, we define
\[
W(m):=\sum_{R\in \mathcal{R}(m)}\frac{1}{|R|+1}.
\]
Note that when $m=2^{d-1}$, we have the following expression
\[
W(2^{d-1})=\sum_{R\subseteq [d-1]}\frac{1}{|R|+1}=\sum_{i=0}^{d-1}\frac{\binom{d-1}{i}}{i+1}=\sum_{i=0}^{d-1}\frac{\binom{d}{i+1}}{d}=\frac{2^{d}-1}{d}. 
\]
The following lemma can be found in \cite{PS2021}.  
For the sake of completeness, we give a proof.
\begin{lemma}[\cite{PS2021}]
Let $d$ and $c$ be positive integers with $c\le 2^{d-2}$. Then we have 
\begin{equation}
W(2^{d-1}-c)\ge \frac{2^{d}-1}{d}-\frac{c}{d-\log c}.\notag
\end{equation}
\label{lem1}
\end{lemma}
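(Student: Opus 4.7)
The plan is to establish the equivalent inequality
\[
W(2^{d-1})-W(2^{d-1}-c)\le \frac{c}{d-\log c},
\]
using the identity $W(2^{d-1})=\frac{2^d-1}{d}$ computed just above. The first step is to read off the last $c$ subsets in $\mathcal{R}(2^{d-1})$ via the standard bit/colex correspondence: the $p$-th subset of $\mathbb{N}^{+}$ in colex order (indexed from $0$) is $\{s\in\mathbb{N}^{+}:\text{bit } s-1 \text{ of } p \text{ is } 1\}$, so its cardinality equals the binary popcount $s(p)$, and the complement within $[d-1]$ corresponds to the bit-flip $p\mapsto 2^{d-1}-1-p$. Denoting by $R_p$ the $p$-th subset in colex, this yields
\[
W(2^{d-1})-W(2^{d-1}-c)=\sum_{p=2^{d-1}-c}^{2^{d-1}-1}\frac{1}{|R_p|+1}=\sum_{p=0}^{c-1}\frac{1}{d-s(p)}.
\]

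Next I would break $\{0,1,\dots,c-1\}$ into blocks dictated by the binary expansion $c=2^{k_1}+2^{k_2}+\dots+2^{k_t}$ with $k_1>k_2>\dots>k_t\ge 0$. The $i$-th block $B_i$ consists of the $2^{k_i}$ consecutive integers whose bits at positions $k_1,\dots,k_{i-1}$ are $1$ and whose remaining lower bits vary over $\{0,\dots,2^{k_i}-1\}$. Inside $B_i$ one has $s(p)=(i-1)+s(q)$ with $q<2^{k_i}$, and $s(q)\le k_i$, so
\[
\sum_{p\in B_i}\frac{1}{d-s(p)}=\sum_{j=0}^{k_i}\binom{k_i}{j}\frac{1}{d-(i-1)-j}\le \frac{2^{k_i}}{d-(i-1)-k_i}.
\]

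Finally, since $k_1>k_2>\dots>k_t$ are strictly decreasing non-negative integers, $k_i\le k_1-(i-1)$ and hence $d-(i-1)-k_i\ge d-k_1$ for every $i$. Summing the block bounds yields
\[
W(2^{d-1})-W(2^{d-1}-c)\le \sum_{i=1}^{t}\frac{2^{k_i}}{d-k_1}=\frac{c}{d-k_1}\le \frac{c}{d-\log c},
\]
since $k_1=\lfloor\log c\rfloor\le \log c$. The hypothesis $c\le 2^{d-2}$ forces $k_1\le d-2$, so every denominator above is at least $2$ and all the estimates are legitimate. The main subtlety is that the naive uniform bound (replacing every $s(p)$ by the maximum popcount $\lfloor\log c\rfloor+1$) would only give $\frac{c}{d-\lceil\log c\rceil}$, which is too weak when $c$ is not a power of $2$; the binary-block decomposition is exactly what sharpens this to $\frac{c}{d-\lfloor\log c\rfloor}$, and that is enough to conclude.
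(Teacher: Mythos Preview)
Your argument is correct, but it is more elaborate than necessary, and your closing remark about the naive bound is mistaken. After your complement step you have
\[
W(2^{d-1})-W(2^{d-1}-c)=\sum_{p=0}^{c-1}\frac{1}{d-s(p)}=\sum_{R\in\mathcal{R}(c)}\frac{1}{d-|R|},
\]
which is exactly the identity the paper derives. At this point the paper simply notes that every $R\in\mathcal{R}(c)$ satisfies $|R|\le\log c$ and concludes immediately. This \emph{is} the naive uniform bound, and it already gives $\frac{c}{d-\log c}$: the maximum of $s(p)$ over $p<c$ is $\lfloor\log c\rfloor$, not $\lfloor\log c\rfloor+1$ as you assert (for $2^k\le c<2^{k+1}$ the only $p<2^{k+1}$ with $s(p)=k+1$ is $p=2^{k+1}-1\ge c$, so it never appears). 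Hence your binary-block decomposition, while correct and yielding the same bound $\frac{c}{d-\lfloor\log c\rfloor}$, is not needed to close the argument. Both approaches rest on the same complement bijection; the paper's version is just the one-line uniform estimate you thought was insufficient.
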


\begin{proof}
Let $c$ be any positive integer in $[2^{d-2}]$.
Then all sets in $\mathcal{R}(2^{d-1}-c)$ are contained in $[d-1]$.
For any $A\subseteq [d-1]$,
write $A^c$ as the complement $[d-1]\backslash A$. 
Sine $A\bigtriangleup B=A^{c}\bigtriangleup B^{c}$,
we have $A\prec_{col}B$ if and only if $B^c\prec_{col} A^c$.
Therefore, we have
\begin{align}\label{equ:R+R^c}
2^{[d-1]}\setminus \mathcal{R}(2^{d-1}-c)=\{ [d-1]\setminus H : H\in \mathcal{R}(c) \}. 
\end{align}
Thus we can conclude that 
\[
W(2^{d-1}-c)=\frac{2^{d}-1}{d}-\sum_{R\in \mathcal{R}(c)}\frac{1}{d-|R|}\geqslant \frac{2^{d}-1}{d}-\frac{c}{d-\log c}, 
\]
where we use the property that any $R\in \mathcal{R}(c)$ satisfies $|R|\leq \log c$.
This finishes the proof.
\end{proof}

The original statement of the lemma below is also from \cite{PS2021}. However, as we require a slightly modified version for upcoming proofs, we provide the proof along with necessary adjustments here.

\begin{lemma}[\cite{PS2021}]\label{appoximation_lemma}
Let $d$ and $c$ be positive integers with $d\ge 4$ and $c\le 2^{d}$. Let $V$ be an arbitrary finite set with $|V|\ge d$. For any hereditary family $\mathcal{H}\subseteq 2^{V}$ with $|\mathcal{H}|\ge 2^{d}-c$, we have 
\begin{equation}
\sum_{H\in \mathcal{H}}\frac{1}{|H|+1}\ge W(2^{d}-c).\notag
\end{equation}
Moreover, if there are $e$ non-isolated vertices in $\mathcal{H}$ with $e>d$, then 
\begin{equation}
\sum_{H\in \mathcal{H}}\frac{1}{|H|+1}\ge W(2^{d}-c)+\frac{1}{6}(e-d).\notag
\end{equation}
\label{lem2}
\end{lemma}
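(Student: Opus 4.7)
The plan for the first inequality is direct: I apply Katona's theorem (Theorem~\ref{katona}) to the monotone non-increasing function $f(k) = 1/(k+1)$. Since $|\mathcal{H}| \geq 2^d - c$, the initial segment $\mathcal{R}(|\mathcal{H}|)$ contains $\mathcal{R}(2^d - c)$, and one obtains
\[
\sum_{H \in \mathcal{H}} \frac{1}{|H|+1} \;\geq\; \sum_{R \in \mathcal{R}(|\mathcal{H}|)} \frac{1}{|R|+1} \;\geq\; \sum_{R \in \mathcal{R}(2^d - c)} \frac{1}{|R|+1} \;=\; W(2^d - c).
\]

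For the moreover part, the plan is to upgrade Katona into a quantitative comparison of size distributions via Abel summation. Write $m := |\mathcal{H}|$, and for each $k \geq 0$ let $n_k, r_k$ denote the number of sets of size $k$ in $\mathcal{H}$ and in $\mathcal{R}(m)$ respectively; set $A_k := \sum_{j=0}^{k}(n_j - r_j)$. Applying Katona's theorem to the non-increasing step function $f(i) = \mathbf{1}[i \leq k]$ yields $A_k \geq 0$ for every $k$, which is the standard majorization form of Kruskal--Katona for hereditary families. Noting $A_0 = 0$ and $A_k = 0$ for all large enough $k$, an Abel summation will give
\[
\sum_{H \in \mathcal{H}} \frac{1}{|H|+1} \;-\; W(m) \;=\; \sum_{k \geq 0} \frac{A_k}{(k+1)(k+2)} \;\geq\; \frac{A_1}{6} \;=\; \frac{e - e_0}{6},
\]
where $e_0$ is the number of non-isolated vertices (equivalently, singletons) of $\mathcal{R}(m)$; the factor $1/6$ emerges automatically as $1/((1+1)(1+2))$.

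To finish, I will split on the size of $m$. If $m \leq 2^d$, then $\mathcal{R}(m) \subseteq 2^{[d]}$ forces $e_0 \leq d$, and combining with $W(m) \geq W(2^d - c)$ gives the desired lower bound $W(2^d-c) + (e-d)/6$. If instead $m > 2^d$, then $e_0$ may exceed $d$, but a short direct estimate will show $W(m) - W(2^d - c) \geq (e_0 - d)/6$ (the difference grows roughly like $2^{e_0}/e_0$, using e.g.\ $W(2^{e_0-1}+1) = (2^{e_0}-1)/e_0 + 1/2$, while $(e_0 - d)/6$ is only linear), which absorbs the gap between $(e - e_0)/6$ and the target $(e - d)/6$.

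The principal obstacle is identifying the right viewpoint for the moreover part. Once one recognises that Katona applied to step functions yields $A_k \geq 0$, and packages the weighted difference via Abel summation, the slack $(e - e_0)/6$ drops out of a single term, and the constant $1/6$ is forced by the weight $1/((k+1)(k+2))$ evaluated at $k=1$. Beyond this observation, the remaining case analysis on $m$ is routine.
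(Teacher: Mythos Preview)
Your proof is correct and rests on the same core observation as the paper's: applying Katona's theorem to step functions yields the majorisation $A_k=\sum_{j\le k}(n_j-r_j)\ge 0$, and the surplus of singletons over the initial segment accounts for the $\tfrac{1}{6}(e-d)$ gain. The packaging, however, differs. The paper compares $\mathcal{H}$ directly to $\mathcal{R}(2^d-c)$: it orders $\mathcal{H}$ by size, partitions the first $2^d-c$ sets into batches $\mathcal{H}_i$ of size $r_i$, and observes that the extra singletons $H_{d+2},\dots,H_{e+1}$ land in batches with index $i\ge 2$, each contributing $\tfrac12-\tfrac13=\tfrac16$ beyond the baseline $\tfrac{1}{i+1}$. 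Your Abel-summation identity
\[
\sum_{H\in\mathcal{H}}\frac{1}{|H|+1}-W(m)=\sum_{k\ge 0}\frac{A_k}{(k+1)(k+2)}
\]
is a slicker extraction of the same slack, and the constant $\tfrac16$ indeed appears automatically as the $k=1$ coefficient. The price you pay is the comparison to $\mathcal{R}(m)$ rather than $\mathcal{R}(2^d-c)$, which forces the case split on $m$. Your Case~2 estimate is correct (and routine, as you say), but it is also avoidable: if you run the Abel summation with $r_k$ taken from $\mathcal{R}(2^d-c)$ instead, the partial sums $A_k$ are still nonnegative (since $\mathcal{R}(m)\supseteq\mathcal{R}(2^d-c)$), now $A_1\ge e-d$ directly because $\mathcal{R}(2^d-c)\subseteq 2^{[d]}$ has at most $d$ singletons, and the boundary term in the summation by parts vanishes since $A_K/(K+1)\to 0$. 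This recovers the paper's one-case argument while keeping your cleaner formalism.
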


\begin{proof}
Let $d$, $n$, $c$ and $\h$ be given as in the statement. By applying Theorem \ref{katona} with $f(k)=\frac{1}{k+1}$ for $k\in\mathbb{N}$, it is easy to get the first part of this lemma
\[
\sum_{H\in \mathcal{H}}\frac{1}{|H|+1}\ge \sum_{R\in \mathcal{R}(2^{d}-c)}\frac{1}{|R|+1}=W(2^{d}-c). 
\]

In order to prove the second part we need some preparation. For $i\in \mathbb{N}$, let $h_{i}$ be the number of $i$-element sets in $\h$ and $r_{i}$ be the number of $i$-element sets in $\mathcal{R}(2^{d}-c)$. 
Note that we have $r_{1}\le d$ and $r_{i}=0$ for $i\ge d$. For any $s\in [d]_{0}$, we set $g(k)=1$ for $k\le s$ and $g(k)=0$ for $k>s$. 
Then by applying Theorem \ref{katona} with $f=g$,
it yields that 
\begin{equation}
\sum_{i\in [s]_{0}}h_{i}\ge \sum_{i\in [s]_{0}}r_{i}.  \notag
\end{equation}

Let $H_{1},...,H_{|\h|}$ be an enumeration of $\h$ such that $|H_{j}|\le |H_{j+1}|$ for any $j\in [|\h|-1]$. For any $i\in [d-1]$, let $\phi (i)$ denote the number of edges of size at most $i$ in the family $\mathcal{R}(2^{d}-c)$, that is $\phi (i)=\sum_{j\in [i]_{0}}r_{j}$. 
Let $\h_{0}=\{H_{1}\}=\{\emptyset\}$. 
For any $i\in [d-1]$, we set $\h_{i}=\{H_{\phi(i-1)+1},...,H_{\phi(i)}\}$. 
Note that $|\h_{i}|=r_{i}$ for $i\in [d-1]_{0}$. 
The inequality $\sum_{i\in [s]_{0}}h_{i}\ge \sum_{i\in [s]_{0}}r_{i}$ implies that for $H\in \h_{i}$, where $i\in [d-1]_{0}$, we have $|H|\le i$. 
Since $r_{1}\le d$, we know that $H_{i}\notin \h_{1}$ for $i>d+1$. On the other hand, by assumption, there are $e$ non-isolated vertices in $\mathcal{H}$ with $e>d$, so we have $|H_{d+2}|=...=|H_{e+1}|=1$ but $H_{d+2},...,H_{e+1}$ are not in $\h_{1}$. Thus we have 
$$
\sum_{H\in \mathcal{H}}\frac{1}{|H|+1}\ge \sum_{i\in [d-1]_{0}}\sum_{H\in \h_{i}}\frac{1}{|H|+1}\ge \left(\frac{1}{2}-\frac{1}{3}\right)(e-d)+\sum_{i\in [d-1]_{0}}\frac{r_{i}}{i+1}=W(2^{d}-c)+\frac{1}{6}(e-d),
$$
finishing the proof.
\end{proof}

\section{Proof of Theorem~\ref{zhudingli_d=5}}

In this section, we present the proof of Theorem~\ref{zhudingli_d=5}, i.e., to show $m(n,11)= 5.3n$ for $10 \mid n$.
By Construction \ref{construction_c=d}, we have already seen that $m(n,11)\le 5.3n$ holds for $10 \mid n$. 
It remains to prove that $m(n,11)\ge 5.3n$ for $10 \mid n$. 
Using Proposition~\ref{dengjiatiaojian}, it suffices to show that for $10 \mid n$, a hereditary family $\mathcal{F}\subseteq 2^{[n]}$ with $\delta(\mathcal{F})\geqslant 12$ must satisfy $|\mathcal{F}|\geq 5.3n+1$. 
We will prove the following stronger statement, 
which holds for general $n\in \mathbb{N}^{+}$ and characterizes the unique extremal family.
 
\begin{thm}\label{thm000}
For any $n\in \mathbb{N}^{+}$, let $V$ be an $n$-element set, and $\mathcal{F} \subseteq 2^{V}$ be a hereditary family with $\delta (\mathcal{F} )\ge 12$. 
If $|\F|\le 5.3n+1$, then we have $10 \mid n$, $|\F|=5.3n+1$ and $\F\cong \mathcal{F}_{0}(n,5)$. 
\end{thm}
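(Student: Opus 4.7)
The plan is to prove $|\mathcal{F}|\ge 5.3n+1$ via a weighted count over vertex links and then to identify the equality case as $\mathcal{F}_0(n,5)$. For $x\in V$ set the link weight $w(x):=\sum_{H\in \mathcal{F}(x)}\frac{1}{|H|+1}$; double counting non-empty sets yields the identity $\sum_{x\in V}w(x)=|\mathcal{F}|-1$, so the theorem is equivalent to proving $\sum_{x}w(x)\ge 5.3\,n$ together with a characterization of tightness. The baseline from Lemma~\ref{appoximation_lemma} (applied with $d=4,c=4$, noting $|\mathcal{F}(x)|\ge \delta(\mathcal{F})\ge 12=2^4-4$) gives only $w(x)\ge W(12)=31/6\approx 5.167$, short of the target by about $0.133\,n$; closing this gap demands a stability-type refinement.

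The technical core is a fine local classification of the possible link structures. By a case analysis of $f$-vectors of hereditary families of size $\ge 12$ (using Theorem~\ref{katona} as a skeleton and ruling out small configurations via explicit enumeration), we show that any $w(x)<5.3$ forces $\mathcal{F}(x)$ to be, up to isomorphism, either the colex minimizer $\mathcal{R}(12)$ (weight $31/6$) or the family $\mathcal{D}$ of all $\le 2$-subsets of a $4$-set plus a single triple (weight $21/4$), and that any link with $e(x)\ge 5$ non-isolated vertices has weight at least $65/12>5.3$. Calling $x$ \emph{ordinary} when $\mathcal{F}(x)\cong \mathcal{D}$, and \emph{peak} when $\mathcal{F}(x)$ realizes the $(1,5,6,0)$ $f$-vector arising in the $\mathcal{F}_0(n,5)$-peak, the identity $(4\cdot 21/4+11/2)/5=5.3$ suggests matching each peak's excess $0.2$ with the four ordinary deficits $0.05$ in its $5$-pile. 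A charging argument that implements this pairing, and absorbs the weaker $\mathcal{R}(12)$-type and intermediate non-peak contributions into the remaining surplus, yields $\sum_x w(x)\ge 5.3n$.

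In the equality case $|\mathcal{F}|=5.3n+1$, all charging inequalities must be tight: every vertex is either ordinary or peak in the exact $4{:}1$ proportion. The rigidity of the ordinary link forces $N(x)$ to be a $5$-element pile carrying the standard induced structure; these piles partition $V$, each containing a unique peak. A compatibility argument between the peak link and the links of its pile-mates then pins the pair-graph of the peak link to $K_4$ on the pile-mates plus one pendant edge, and this pendant edge must go to another peak (otherwise the receiving ordinary link would acquire a fifth non-isolated vertex), matching the piles into pairs. Hence $10\mid n$ and $\mathcal{F}\cong \mathcal{F}_0(n,5)$.

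\textbf{Main obstacle.} The hardest step is the local classification of low-weight links, a stability theorem for Kruskal--Katona at cardinality $12$ under the harmonic weight $1/(k+1)$. Lemma~\ref{appoximation_lemma} alone gives only the crude baseline $W(12)$, and completing the classification requires separately handling intermediate-weight families with $e(x)\in\{4,5\}$ and size near $12$. A further subtlety is that the peak \emph{weight} alone does not force the $K_4$-plus-pendant \emph{shape}, so one must cross-reference each peak's link with those of its pile-mates; making this global consistency argument work, and using it to rule out Katona-type deficient links entirely in the equality case, is the most delicate part of the proof.
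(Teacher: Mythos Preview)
Your overall architecture---double counting via link weights, a local classification of the deficient links, then a discharging argument, followed by a rigidity analysis at equality---matches the paper's. Your classification is also correct: the only hereditary links of size $\ge 12$ with $\sum 1/(|H|+1)<5.3$ are $\mathcal{R}(12)$ (the paper's \emph{mini-weight} link, $f$-vector $(1,4,5,2)$, weight $31/6$) and your $\mathcal{D}$ ($f$-vector $(1,4,6,1)$, weight $21/4$).

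The gap is in the discharging. You route surplus from ``peak'' vertices (those with $f$-vector $(1,5,6,0)$, weight $5.5$) to the $\mathcal{D}$-type vertices, and this balances the extremal configuration exactly. But you do not say who pays for an $\mathcal{R}(12)$-type vertex, whose deficit $2/15$ is almost three times that of a $\mathcal{D}$-type. A mini-weight vertex $x$ lies in two $4$-sets $Q_1,Q_2$; your peaks lie in \emph{no} $4$-set ($f_3=0$), so there is no natural adjacency along which to send them charge. Moreover the non-mini-weight vertices that \emph{do} share a $4$-set with $x$ may be $\mathcal{D}$-type, and under the standard weight those still carry a deficit of $1/20$, so ``charge through $4$-sets'' does not work either. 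The phrase ``absorbs the weaker $\mathcal{R}(12)$-type \dots\ into the remaining surplus'' hides exactly the step that needs an idea.

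The paper's solution is to abandon the uniform weight $1/|F|$ on triples. Its Algorithm~\ref{alg} redistributes the mass of each triple $\{x,y,z\}$ not contained in a $4$-set as $(\tfrac{7}{20},\tfrac{7}{20},\tfrac{6}{20})$ according to which pair has the largest co-degree. Under this modified weight $U(x)$, both $\mathcal{D}$-type and peak vertices in the extremal family have weight \emph{exactly} $5.3$ (not $5.25$ and $5.5$); only mini-weight vertices remain deficient, by exactly $2/15$. Those are then discharged through $4$-sets via a perturbation $\varepsilon(x)=\sum_{Q\ni x}\varepsilon(x,Q)$: each $4$-set contains at most three mini-weight vertices, and the case analysis of Lemma~\ref{keykey} shows that any non-mini-weight vertex in a $4$-set has enough surplus in $U$ to cover its share. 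The equality characterisation then follows cleanly because $U(x)<5.3$ occurs only for mini-weight vertices, and these are shown not to exist at equality.

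Your equality sketch is close to the paper's Section~3.3, but note that forcing the peak's pendant edge to land on another peak (rather than on an ordinary vertex of a different pile) also relies on the tightness statements in Lemma~\ref{keykey}, which are formulated for $U$, not for the standard $w$.
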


Recall that for $d\geq 5$, the family $\mathcal{F}_{0}(n,d)$ are given in Construction~\ref{construction_c=d}.

\subsection{Preparetion}
In this subsection, we establish the necessary notations and introduce a key lemma for the proof of Theorem \ref{thm000}.
Throughout the remaining of this section, let $V$ be a set with $|V|=n$, and $\mathcal{F} \subseteq 2^{V}$ be a hereditary family with $\delta(\mathcal{F})\ge 12$.
We may further assume that $\mathcal{F}$ is {\it minimal} in the sense that 
for any maximal set $F\in \mathcal{F}$, $\delta(\mathcal{F}\setminus \{F\})\le 11$.\footnote{A set $F\in \mathcal{F}$ is called {\it maximal} if there is no other set $F^\prime \in \mathcal{F}$ with $F\subsetneq F^\prime$. It is evident that for a hereditary family $\mathcal{F}$ and a maximal set $F\in \mathcal{F}$, the subfamily $\mathcal{F}\setminus \{F\}$ remains hereditary.}

Now, we will prove the first lemma, which asserts that under the aforementioned condition, every set $F$ has size at most $4$.

\begin{lemma}\label{lem:|F|<=4}
  Let $V$ be a set with $|V|=n$, and $\mathcal{F} \subseteq 2^{V}$ be a minimal hereditary family with $\delta(\mathcal{F})\ge 12$. 
  Then every set $F\in \mathcal{F}$ satisfies $|F|\leq 4$.
\end{lemma}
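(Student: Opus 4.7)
The plan is to argue by contradiction using the minimality hypothesis together with the fact that the link of a vertex in a hereditary family is itself hereditary.

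First I would suppose, towards a contradiction, that some $F\in\mathcal{F}$ has $|F|\geq 5$. Since $\mathcal{F}$ is hereditary and finite, $F$ is contained in some maximal set $F^*\in\mathcal{F}$, which also satisfies $|F^*|\geq 5$. The minimality assumption applied to $F^*$ yields $\delta(\mathcal{F}\setminus\{F^*\})\leq 11$, so there is a vertex $x\in V$ with $d_{\mathcal{F}\setminus\{F^*\}}(x)\leq 11$. Since deleting a single set decreases the degree of any vertex by at most one, and since $\delta(\mathcal{F})\geq 12$ forces $d_\mathcal{F}(x)\geq 12$, I would then conclude that $x\in F^*$ and $d_\mathcal{F}(x)=12$.

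Next I would look at the link $\mathcal{F}(x)=\{F'\setminus\{x\}:x\in F'\in\mathcal{F}\}$. Because $\mathcal{F}$ is hereditary, every subset of $F^*$ containing $x$ lies in $\mathcal{F}$, so every subset of $F^*\setminus\{x\}$ lies in $\mathcal{F}(x)$. Thus $\mathcal{F}(x)\supseteq 2^{F^*\setminus\{x\}}$, which gives $|\mathcal{F}(x)|\geq 2^{|F^*|-1}\geq 2^4=16$. But by definition $|\mathcal{F}(x)|=d_\mathcal{F}(x)=12$, a contradiction, completing the argument.

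There is no real obstacle here: the proof is essentially a one-line application of the minimality hypothesis combined with the trivial observation that a hereditary family containing a set of size $k$ already has at least $2^k$ members. The only care needed is to justify that the vertex $x$ witnessing $\delta(\mathcal{F}\setminus\{F^*\})\leq 11$ must actually lie in $F^*$, which follows immediately from the fact that degrees of vertices outside $F^*$ are unchanged upon removing $F^*$.
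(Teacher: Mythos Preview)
Your proof is correct and follows essentially the same approach as the paper: both arguments observe that any vertex $x$ in a maximal set $F^*$ of size at least $5$ must have $d_\mathcal{F}(x)\geq 2^{|F^*|-1}\geq 16$, so removing $F^*$ cannot drop the minimum degree below $12$, contradicting minimality. The only cosmetic difference is that the paper argues directly that $\delta(\mathcal{F}\setminus\{F^*\})\geq 15$, whereas you first locate the witness vertex $x$ and then derive the contradiction $12=d_\mathcal{F}(x)\geq 16$.
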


\begin{proof}
Suppose not. Then there exists a maximal set $F_{0}\in \mathcal{F}$ with $|F_{0}|\geqslant 5$. 
Since $\mathcal{F}$ is hereditary, for any $x\in F_{0}$, we have $d_{\mathcal{F}}(x)\geqslant 16$ and $d_{\mathcal{F}\setminus \{F_{0}\}}(x)\geqslant 15$. 
Thus we can derive that $\mathcal{F}\setminus \{F_{0}\}$ is still a hereditary family with minimum degree at least $12$, a contradiction. 
\end{proof}

Next, we proceed to define a weight function $\U:V\to \mathbb{R}^+$ such that $\sum_{x\in V} \U(x)=|\mathcal{F}|-1$. 
To do so, we first present the following algorithm to define a positive weight $\w(x,F)$ for all pairs $(x,F)$ satisfying $x\in F\in \mathcal{F}$.
Given $A\subseteq V$, we define the \emph{degree} of $A$ in $\mathcal{F}$ as follows: 
\begin{align}
d_{\mathcal{F}}(A)=|\{F\in \mathcal{F}:A\subseteq F\}|. \notag
\end{align}
In this notation, $d_{\mathcal{F}}(\{x\})=d_{\mathcal{F}}(x)$. 
Below is the algorithm for defining $\w(x,F)$.

\begin{algo}\label{alg}
    Fix $F\in \mathcal{F}$. If $|F|\ne 3$, we set $\w(x,F)=\frac{1}{|F|}$ for every $x\in F$. If $|F|=3$ and $F=\{x,y,z\}$, we perform the following process: 
\begin{enumerate}
\item[(1).] If there exists $G\in \mathcal{F}$ with $|G|=4$ such that $F\subseteq G$, then we set $\w(x,F)=\w(y,F)=\w(z,F)=\frac{1}{3}$; otherwise, go to the step (2).
\item[(2).] Without loss of generality, we assume that $d_{\F}(\{x,y\})\le d_{\F}(\{y,z\})\le d_{\F}(\{x,z\})$. 
If $d_{\F}(\{x,y\})\le d_{\F}(\{y,z\})\le 4<d_{\F}(\{x,z\})$, then we set $\w(x,F)=\w(z,F)=\frac{7}{20}$ and $\w(y,F)=\frac{6}{20}$. 
If $d_{\F}(\{x,y\})\le 4<d_{\F}(\{y,z\})\le d_{\F}(\{x,z\})$, then we set $\w(x,F)=\w(y,F)=\frac{7}{20}$ and $\w(z,F)=\frac{6}{20}$. Otherwise, we set $\w(x,F)=\w(y,F)=\w(z,F)=\frac{1}{3}$. 
\end{enumerate}
\end{algo}
It is evident that our definition of $\w(x,F)$ is well-defined, and for each $F\in \mathcal{F}$, we have $\sum_{x\in F} \w(x,F)=1$.
Now we define the \emph{weight} of $x$ as follows.

\begin{definition}
For each $x\in V$, let $\U(x)=\sum_{x\in F\in\mathcal{F} }\w(x,F).$
\end{definition}
\noindent It is easy to derive the following desired equation
\begin{align}\label{equ:sumU}
|\mathcal{F}|-1=|\mathcal{F}\setminus \{\emptyset\}|=\sum_{\emptyset \ne F\in\mathcal{F} }  1=\sum_{\emptyset \ne F\in\mathcal{F} } \sum_{x\in F} \w(x,F)=\sum_{x\in V}\sum_{x\in F\in\mathcal{F} }\w(x,F)=\sum_{x\in V}\U(x).
\end{align}

With this in mind, the problem now can be reduced to estimate the weight $\U(x)$.
To do so, we will analyze the structure of the links of vertices. 
For convenience, we introduce the following notation. 
For any $x\in V$, let $\ff_{i}(x)$ be the number of $i$-element sets in $\mathcal{F}(x)$. In particular, we have $\ff_{0}(x)=1$. 
Note that for $i\ge 4$, by Lemma~\ref{lem:|F|<=4}, we have $\ff_{i}(x)=0$ for any $x\in V$. 
Therefore, for any $x\in V$ we have $d_{\F}(x)=|\F(x)|=\ff_{0}(x)+\ff_{1}(x)+\ff_{2}(x)+\ff_{3}(x)$. 

Next, we introduce a crucial concept called \emph{\miniweight} vertices, which will play a significant role in the proofs. 
An important property that will be demonstrated later (see Lemma~\ref{lem:keyd=5}) is that a vertex is \miniweight \ if and only if its weight $\U(x)$ is less than $5.3$. 

\begin{definition}
  We say that a vertex $x\in V$ is \emph{\miniweight}, if $\ff_{1}(x)=4$, $\ff_{2}(x)=5$, and $\ff_{3}(x)=2$.   
\end{definition}

Before delving deeper into the properties on \miniweight \ vertices, we first present the following straightforward lemma.
Let $\Q=\{\q\in \mathcal{F}:|\q|=4\}$ denote the set of all $4$-element sets in $\mathcal{F}$. 
For $x\in V$, let $\Q(x)=\{\q\in \Q:x\in \q\}$.  
Note that $|\Q(x)|=\ff_3(x)$.

\begin{lem}
Let $x\in V$ be a \miniweight \ vertex. Then we have 
\begin{enumerate}[label =(\arabic*)]
\item $|\Q(x)|=2$. 
\item If $\Q(x)=\{\q_{1},\q_{2}\}$, then $|\q_{1}\cap \q_{2}|=3$ and $\mathcal{F}(x)=\{F\setminus \{x\}:x\in F\subseteq \q_{i}\mbox{ for some }i\in\{1,2\}\}$. 
\item $\U(x)=5.3-\frac{2}{15}$. 
\end{enumerate}
\label{lem100}
\end{lem}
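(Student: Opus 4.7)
Part (1) is immediate: the identity $|\Q(x)|=\ff_3(x)$ noted just before the lemma, combined with the definition of a \miniweight\ vertex (which gives $\ff_3(x)=2$), yields $|\Q(x)|=2$.

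For part (2), the plan is to exploit the fact that the link $\F(x)$ is itself hereditary. I would write $T_i:=\q_i\setminus\{x\}$ for $i\in\{1,2\}$; by Lemma~\ref{lem:|F|<=4} these are the only two $3$-element sets in $\F(x)$. Setting $k:=|T_1\cap T_2|$, heredity forces every element of $T_1\cup T_2$ to appear as a singleton of $\F(x)$, so $|T_1\cup T_2|=6-k\leq \ff_1(x)=4$, which forces $k=2$ and hence $|\q_1\cap\q_2|=3$. Then, labelling the four vertices of $N(x)\setminus\{x\}$ as $a,b,c,d$ with $T_1=\{a,b,c\}$ and $T_2=\{a,b,d\}$, heredity places all five pairs $\{a,b\},\{a,c\},\{b,c\},\{a,d\},\{b,d\}$ into $\F(x)$, and since $\ff_2(x)=5$ these exhaust the $2$-element sets of the link. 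This pins down $\F(x)$ as the family of all subsets of $T_1$ or $T_2$, which is exactly the claimed description in terms of $\q_1$ and $\q_2$.

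For part (3), I would sum $\w(x,F)$ over all $F\ni x$ using Algorithm~\ref{alg}. The crucial input from (2) is that every $3$-set through $x$ is contained in $\q_1$ or $\q_2$, so Step~(1) of the algorithm applies to each such $F$ and it contributes $\tfrac{1}{3}$; in particular Step~(2) never triggers at a \miniweight\ vertex. The remaining contributions are direct from the $|F|\neq 3$ case of the algorithm: $\w(x,\{x\})=1$, each of the four edges of size $2$ through $x$ contributes $\tfrac{1}{2}$, and each of $\q_1,\q_2$ contributes $\tfrac{1}{4}$. Summing gives
\[
\U(x)=1+4\cdot\tfrac{1}{2}+5\cdot\tfrac{1}{3}+2\cdot\tfrac{1}{4}=\tfrac{31}{6}=5.3-\tfrac{2}{15}.
\]

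There is no deep obstacle here; the bulk of the work is the bookkeeping in part (2), and part (3) is arithmetic once (2) is established. The only place one has to be a bit careful is making sure that (2) rules out the Step~(2) branch of Algorithm~\ref{alg}, since otherwise the weights $\tfrac{7}{20}$ and $\tfrac{6}{20}$ could appear and change the final value of $\U(x)$.
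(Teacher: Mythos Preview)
Your proposal is correct and follows essentially the same approach as the paper. The only cosmetic difference is that for part~(2) the paper argues via $|\q_1\cup\q_2|\le |N(x)|=\ff_1(x)+1=5$ and then compares $|\F(x)|=12$ with the size of the claimed family, whereas you count layer by layer; both arguments are the same heredity-plus-counting idea, and your observation in~(3) that Step~(1) of Algorithm~\ref{alg} always applies is exactly what the paper uses as well.
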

\begin{proof}
(1) It is easy to see that we have $|\Q(x)|=\ff_{3}(x)=2$. 

(2) Because $\q_{1}\cup \q_{2}\subseteq N(x)$, we have $5\le |\q_{1}\cup \q_{2}|\le |N(x)|=\ff_{1}(x)+1=5$. Thus we have $|\q_{1}\cup \q_{2}|=5$ and $|\q_{1}\cap \q_{2}|=3$. It is obvious that $\{F\setminus \{x\}:x\in F\subseteq \q_{i}\mbox{ for some }i\in\{1,2\}\}\subseteq\mathcal{F}(x)$ because $\mathcal{F}(x)$ is a hereditary family. 
Since $|\mathcal{F}(x)|=\ff_{0}(x)+\ff_{1}(x)+\ff_{2}(x)+\ff_{3}(x)=12=|\{F\setminus \{x\}:x\in F\subseteq \q_{i}\mbox{ for some }i\in\{1,2\}\}|$, we derive that $\mathcal{F}(x)=\{F\setminus \{x\}:x\in F\subseteq \q_{i}\mbox{ for some }i\in\{1,2\}\}$. 

(3) Since $\mathcal{F}(x)=\{F\setminus \{x\}:x\in F\subseteq \q_{i}\mbox{ for some }i\in\{1,2\}\}$, then we know that for any $F\in\F(x)$ with $|F|=2$, we have $\w(x,F\cup\{x\})=\frac{1}{3}$. Thus we have $\U(x)=1+4\times\frac{1}{2}+5\times\frac{1}{3}+2\times\frac{1}{4}=5.3-\frac{2}{15}$. 
\end{proof}

\begin{corollary}
Let $\q\in \Q$. Then $\q$ contains at most three \miniweight \ vertices. 
\end{corollary}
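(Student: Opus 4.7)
The plan is to argue by contradiction. Suppose that every vertex of $\q=\{a,b,c,d\}$ is \miniweight. By Lemma~\ref{lem100}(1)--(2), for each $x\in \q$ we have $\Q(x)=\{\q,\q_x\}$ with $|\q\cap \q_x|=3$, and $\mathcal{F}(x)$ consists exactly of the subsets of $\q\setminus\{x\}$ together with the subsets of $\q_x\setminus\{x\}$. After relabelling $b,c,d$ if necessary, I may write $\q_a=\{a,b,c,z_a\}$, where $z_a$ is the unique vertex of $\q_a$ lying outside $\q$. In particular, the $3$-sets $\{a,b,z_a\}$ and $\{a,c,z_a\}$ belong to $\mathcal{F}$ as subsets of $\q_a$.

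Next I would propagate this structure through $b$ and $c$. Since $\{a,b,z_a\}\in \mathcal{F}$ contains $b$ and $z_a\notin \q$, Lemma~\ref{lem100}(2) applied at the \miniweight\ vertex $b$ forces $\{a,b,z_a\}\subseteq \q_b$; consequently $a\in \q_b$, and the unique vertex of $\q_b$ lying outside $\q$ equals $z_a$. The symmetric argument with $\{a,c,z_a\}$ at $c$ yields $a\in \q_c$ as well, again with $z_a$ as the unique outside vertex of $\q_c$.

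The last step is to derive a contradiction at $d$. Since $d$ is \miniweight, $\q_d$ contains $d$, two vertices from $\{a,b,c\}$, and one vertex outside $\q$. I would rule out each of the three candidate vertices in turn. If $a\in \q_d$, then $\q_d\in \Q(a)=\{\q,\q_a\}$, but $\q_d\neq \q$ (it has a vertex outside $\q$) and $\q_d\neq \q_a$ (it contains $d$, while $d\notin \q_a$). If $b\in \q_d$, then $\q_d\in \Q(b)=\{\q,\q_b\}$, which forces $\q_d=\q_b$, and hence $a\in \q_d$, contradicting the previous case. The case $c\in \q_d$ is handled analogously using $a\in \q_c$. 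Therefore $\q_d\cap \q\subseteq \{d\}$, contradicting $|\q\cap \q_d|=3$.

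The main obstacle is the bookkeeping around the ``outside'' vertices of the four sets $\q_a,\q_b,\q_c,\q_d$: the crucial point is to observe that the unique element of $\q_b\setminus \q$ (and of $\q_c\setminus \q$) must coincide with $z_a$, which is what forces $a$ into both $\q_b$ and $\q_c$ and ultimately drives the triple exclusion at $d$. Once this is set up correctly, the rest is a direct application of Lemma~\ref{lem100}(1)--(2).
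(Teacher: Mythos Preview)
Your argument is correct, but it is considerably more elaborate than the paper's. The paper also argues by contradiction, assuming every vertex of $\q$ is \miniweight, but then finishes in two lines: pick any $x\in\q$ and let $\Q(x)=\{\q,\q'\}$ with $|\q\cap\q'|=3$; let $y$ be the unique element of $\q\setminus\q'$ and let $\Q(y)=\{\q,\q''\}$. Then $\q'\ne\q''$ (since $y\in\q''\setminus\q'$), and $\q\cap\q'$ and $\q\cap\q''$ are two $3$-subsets of the $4$-set $\q$, so they share some vertex $z\in\q$. This $z$ lies in the three distinct $4$-sets $\q,\q',\q''$, whence $\ff_3(z)\ge 3$, contradicting that $z$ is \miniweight. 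In particular the paper uses only the intersection-size information from Lemma~\ref{lem100}(2), never the full description of the link $\mathcal{F}(x)$, and it does not need to track the outside vertex $z_a$ at all. Your route works and nicely illustrates how the structure propagates (forcing $a\in\q_b$ and $a\in\q_c$), but the degree-count contradiction is quicker and avoids the case analysis at $d$.
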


\begin{proof}
If not, all vertices in $\q$ is \miniweight. Pick $x\in \q$, by Lemma \ref{lem100}, we can assume $\Q(x)=\{\q,\q'\}$ and $|\q\cap\q'|=3$. Assume $y\in\q\setminus\q'$ and $\Q(y)=\{\q,\q''\}$, we also have $|\q\cap\q''|=3$. Thus we know that $\q'\ne\q''$ and $\q\cap\q'\cap\q''\ne\emptyset$. Pick $z\in\q\cap\q'\cap\q''\subseteq\q$, we know that $\ff_{3}(z)\ge3$, so $z$ can not be \miniweight. This is contradicted with assumption. 
\end{proof}

In what follows, we aim to define a perturbation $\e: V\to \mathbb{R}$ such that $\sum_{x\in V} \e(x)=0$, and show that $\U(x)\geq 5.3+\e(x)$ holds for each $x\in V$.
Note that this would imply that $|\mathcal{F}|\geq 5.3n+1$.
For $x\in \q\in \Q$, let $\C(\q)=|\{y\in\q:y\mbox{ is \miniweight}\}|$. 
Then we define $\e(x,\q)$ as follows:
\[
\e(x,\q)=\begin{cases}
         -\frac{1}{15}, &\mbox{ if } x\mbox{ is \miniweight}, \\
         \frac{\C(\q)}{15(4-\C(\q))}, &\mbox{ if } x\mbox{ is not \miniweight}. 
         \end{cases} 
\]

\begin{definition}
    For any $x\in V$, we define $\e(x)=\sum_{\q\in\Q(x)}\e(x,\q). $
\end{definition}

Since $\C(\q)\le3$, our definition is well-defined. In particular, $\e(x)=0$ if $\ff_{3}(x)=0$. Moreover, the following fact shows that the sum of $\e(x)$ over all $x\in V$ indeed is zero:
\begin{align}\label{equ:sum-e}
\sum_{x\in V}\e(x)=\sum_{\q\in \Q}\sum_{x\in\q}\e(x,\q)=\sum_{\q\in \Q}\left(-\frac{\C(\q)}{15}+\frac{\C(\q)}{15(4-\C(\q))}(4-\C(\q))\right)=0.
\end{align}

Using \eqref{equ:sumU} and \eqref{equ:sum-e}, it is evident that if one can prove $\U(x)\ge 5.3+\e(x)$ for any $x\in V$, then it follows that $|\mathcal{F}|\geq 5.3n+1$. 
However, determining the structure of the (unique) extremal family requires additional efforts and considerations.
All the crucial properties required for proving Theorem \ref{thm000} will be demonstrated in the following key lemma. 

\begin{lem}[key lemma]\label{lem:keyd=5}
Let $V$ be an $n$-element set and $\mathcal{F} \subseteq 2^{V}$ be a minimal hereditary family with $\delta (\mathcal{F} )\ge 12$. Then we have 
\begin{enumerate}[label =(\arabic*)]
\item For any $x\in V$, we have $\U(x)\ge 5.3+\e(x)$. 
\item $\U(x)<5.3$ if and only if $x$ is \miniweight. 
\item If $\U(x)>5.3$, then $\U(x)>5.3+\e(x)$. 
\item If $\U(x)=5.3$, then either $\ff_{1}(x)=5,\ff_{2}(x)=6,\ff_{3}(x)=0$ or $\ff_{1}(x)=4,\ff_{2}(x)=6,\ff_{3}(x)=1$. 
\end{enumerate}
\label{keykey}
\end{lem}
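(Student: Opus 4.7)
The plan is to prove all four parts together by a case analysis on the structure of the link $\F(x)$. By Lemma~\ref{lem:|F|<=4}, every set in $\F$ has size at most $4$, so $\F(x)$ is a hereditary family with $|\F(x)| = d_\F(x) \ge 12$ whose sets have size at most $3$. First I would decompose
\[
\U(x) = \U_0(x) + \Delta(x), \quad \text{where} \quad \U_0(x) := 1 + \tfrac{\ff_1(x)}{2} + \tfrac{\ff_2(x)}{3} + \tfrac{\ff_3(x)}{4} = \sum_{H \in \F(x)} \tfrac{1}{|H|+1}
\]
is the uniform-weight baseline and $\Delta(x) := \sum_{F\ni x,\,|F|=3}\bigl(\w(x,F) - \tfrac{1}{3}\bigr)$ encodes the Algorithm~\ref{alg} redistribution. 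A redistributed triangle contributes $+\tfrac{1}{60}$ to $\Delta(x)$ when $x$ is a ``high'' vertex and $-\tfrac{1}{30}$ when $x$ is ``low''. Theorem~\ref{katona} gives $\U_0(x) \ge W(d_\F(x))$, and the key numerical facts are $W(12) = \tfrac{31}{6} = 5.3 - \tfrac{2}{15}$ with $\mathcal{R}(12)$ realising the mini-weight profile $(\ff_1,\ff_2,\ff_3) = (4,5,2)$, and $W(13) = \tfrac{11}{2} = 5.5$.

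When $d_\F(x) \ge 13$ the baseline exceeds $5.3$ by at least $\tfrac{1}{5}$; I would bound the loss from $\Delta(x)$ using that $x$ is ``low'' in a triangle $\{x,u,v\}$ only when $d_\F(\{x,u\})$ and $d_\F(\{x,v\})$ lie on the same side of $4$, which forces structural restrictions on the neighbourhood of $x$. When $d_\F(x) = 12$ I would enumerate the profiles $(\ff_1,\ff_2,\ff_3)$ summing to $11$ that satisfy the Kruskal--Katona shadow constraints; most profiles (for instance $(5,5,1), (6,4,1), (3,7,1), (4,7,0)$) already give $\U_0(x) \ge 5.3 + \tfrac{1}{10}$ and can absorb any $\Delta$-loss. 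The delicate profiles are three: $(4,5,2)$ is the mini-weight profile, handled exactly by Lemma~\ref{lem100}; for $(4,6,1)$ the unique $4$-set $\q \ni x$ contains three of the six triangles through $x$ (which are untouched by step (1) of Algorithm~\ref{alg}), and for each of the other three triangles $\{x,u,d\}$ with $u \in \q$ and $d \notin \q$ one computes $d_\F(\{x,u\}) = 5 > 4$, forcing $x$ to be ``high'' and giving $\Delta(x) = \tfrac{3}{60} = 0.05$, hence $\U(x) = 5.3$ exactly; for $(5,6,0)$ the link graph on $N(x)\setminus\{x\}$ is $K_{2,3}$ with $\U_0 = 5.5$, and $\U(x) = 5.3$ forces all six off-$x$ pair-degrees to exceed $4$ so that $x$ is ``low'' in every triangle.

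Parts (2)--(4) then follow by reading off the equality cases from this enumeration: (2) only the mini-weight profile yields $\U(x) < 5.3$; (3) and (4) because only $(4,6,1)$ and $(5,6,0)$ achieve $\U(x) = 5.3$. For part (1), I would upgrade $\U(x) \ge 5.3$ to $\U(x) \ge 5.3 + \e(x)$ by tracking $\e(x,\q)$ over each $4$-set $\q \in \Q(x)$: when $x$ is mini-weight Lemma~\ref{lem100}(3) gives $\U(x) - 5.3 = -\tfrac{2}{15} = 2 \cdot (-\tfrac{1}{15}) = \e(x)$, matching exactly; for non-mini-weight $x$ with a mini-weight neighbour $y \in \q$, the mini-weight status of $y$ forces specific pair-degrees at $y$ which, via Algorithm~\ref{alg}, translate into additional ``high'' contributions to $\Delta(x)$. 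The main obstacle I anticipate is this last step -- correlating the mini-weight criterion on $y$ with the Algorithm~\ref{alg} case distinction on triangles through $x$, while keeping bookkeeping consistent with $\delta(\F) \ge 12$ and the minimality of $\F$, so that the gain precisely meets the claimed $\tfrac{\C(\q)}{15(4-\C(\q))}$ per $4$-set.
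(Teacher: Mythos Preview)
Your framework and the treatment of the three delicate profiles $(4,5,2)$, $(4,6,1)$, $(5,6,0)$ are correct (though $(3,7,1)$ and $(4,7,0)$ violate $\ff_2\le\binom{\ff_1}{2}$ and should be discarded). The real gap is in your strategy for part~(1): you propose to recover the surplus $\e(x)$ by arguing that mini-weight neighbours in $\q$ force extra ``high'' contributions to $\Delta(x)$, but this mechanism simply cannot supply enough weight.

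Take the profile $(\ff_1,\ff_2,\ff_3)=(5,5,1)$ at $d_\F(x)=12$, which you list among the ``easy'' cases. Here $\U_0(x)=\tfrac{65}{12}$; three of the five triangles lie in the unique $\q\in\Q(x)$ and contribute $0$ to $\Delta$, while the other two give $\Delta(x)\le\tfrac{2}{60}$. Hence $\U(x)\le\tfrac{65}{12}+\tfrac{1}{30}=5.45$ irrespective of the mini-weight status of neighbours. If $\C(\q)=3$ then $\e(x)=\tfrac15$, and you would need $\U(x)\ge 5.5$, which is unreachable. The paper does not try to extract more weight at $x$; instead it \emph{bounds $\C(\q)$ directly}. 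In this configuration one of the two off-$\q$ triangles has the form $\{x,y,z\}$ with $y\in\q$, $z\notin\q$; since $\ff_3(x)=1$ this triangle lies in no $4$-set, so by Lemma~\ref{lem100}(2) the vertex $y$ cannot be mini-weight, forcing $\C(\q)\le 2$ and $\e(x)\le\tfrac{1}{15}$. Analogous structural arguments (Claim~\ref{lem10225}) bound $\C(\q)\le 2$ in the cases $\ff_3(x)=2$ with $|\q_1\cap\q_2|=3$, and $\ff_3(x)\in\{3,4\}$ with $\ff_1(x)=4$. This is why the paper organises the whole case analysis by $\ff_3(x)$ rather than by $d_\F(x)$: both $\e(x)$ and the set of triangles exempt from step~(2) of Algorithm~\ref{alg} are governed by $\Q(x)$. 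Your branch $d_\F(x)\ge 13$ has the same defect---$\e(x)$ can grow like $\tfrac15\ff_3(x)$ and the Katona baseline alone does not control it without these structural bounds on $\C(\q)$.
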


The proof of this lemma will be provided in the subsequent subsection.

\subsection{Proof of Lemma~\ref{keykey}}
This subsection is devoted to the proof of Lemma \ref{keykey}. 
To accomplish this, we need to carefully estimate the weight $\U(x)$ for each vertex $x\in V$ based on the structure of its link $\mathcal{F}(x)$. 
In the following two lemmas, we present two special cases that will play significant roles in classifying the extremal families.

\begin{lem}
Let $V$ be an $n$-element set and $\mathcal{F} \subseteq 2^{V}$ be a minimal hereditary family with $\delta (\mathcal{F} )\ge 12$. 
Let $x\in V$. If $\ff_{3}(x)=0$, then $\U(x)\ge 5.3$. 
Moreover, the equality holds if and only if $\ff_{1}(x)=5$, $\ff_{2}(x)=6$ and $\w(x,F\cup\{x\})=\frac{6}{20}$ for any $F\in\mathcal{F}(x)$ with $|F|=2$. 
\label{lem102}
\end{lem}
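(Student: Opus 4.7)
The plan is to exploit the hypothesis $\ff_3(x)=0$ to simplify the structure of the link $\F(x)$, apply a uniform lower bound on the contribution of each set to $\U(x)$, and then optimise over the allowed parameters $(\ff_1(x),\ff_2(x))$.

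First I would observe that since $\F(x)$ is hereditary and contains no $3$-element set, every $2$-element set of $\F(x)$ is a $2$-subset of the $\ff_1(x)$ singletons of $\F(x)$, so $\ff_2(x)\le \binom{\ff_1(x)}{2}$. Combined with the minimum-degree bound $d_{\F}(x)=1+\ff_1(x)+\ff_2(x)\ge 12$, this rules out $\ff_1(x)\le 4$: in that case $\ff_2(x)\le \binom{4}{2}=6$ would force $d_{\F}(x)\le 11$. Hence $\ff_1(x)\ge 5$.

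Next I would bound the weight of each $3$-set $F\ni x$. Since $\ff_3(x)=0$, no $4$-set of $\F$ contains $F$, so step (1) of Algorithm~\ref{alg} never triggers and we always fall into step (2). An inspection of the three branches of step (2) shows that the only possible weights assigned are $\tfrac{6}{20}$, $\tfrac{7}{20}$, and $\tfrac{1}{3}$, all of which are at least $\tfrac{6}{20}$. Combined with the trivial contributions $\w(x,\{x\})=1$ and $\w(x,F)=\tfrac{1}{2}$ for $|F|=2$,
\[
\U(x)\ge 1+\tfrac{1}{2}\ff_1(x)+0.3\,\ff_2(x)\ge 1+\tfrac{1}{2}\ff_1(x)+0.3(11-\ff_1(x))=4.3+0.2\,\ff_1(x)\ge 5.3,
\]
where I used $\ff_2(x)\ge 11-\ff_1(x)$, $\tfrac{1}{2}>0.3$, and $\ff_1(x)\ge 5$.

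For the equality characterisation, every inequality above must be tight. The final step forces $\ff_1(x)=5$; then $\ff_2(x)\ge 11-\ff_1(x)=6$ combined with $\ff_2(x)\le \binom{5}{2}=6$ gives $\ff_2(x)=6$; and tightness of $\w(x,F)\ge \tfrac{6}{20}$ for every $3$-set $F\ni x$ yields exactly the stated condition $\w(x,F\cup\{x\})=\tfrac{6}{20}$ for every $F\in\F(x)$ with $|F|=2$. The only subtle point is checking that $\tfrac{6}{20}$ is indeed the smallest weight assigned in step (2), which reduces to the numerical comparison $\tfrac{6}{20}<\tfrac{7}{20}<\tfrac{1}{3}$ across the three branches; this is the main (quite minor) obstacle.
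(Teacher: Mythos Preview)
Your approach is essentially identical to the paper's: show $\ff_1(x)\ge 5$, bound every $3$-set weight below by $\tfrac{6}{20}$, and run the same two-step inequality chain. Two small slips to fix: first, $\binom{5}{2}=10$, not $6$, so your equality argument for $\ff_2(x)=6$ should instead invoke tightness of the middle inequality $\ff_2(x)\ge 11-\ff_1(x)$ (which directly gives $\ff_2(x)=6$ once $\ff_1(x)=5$); second, the ordering is $\tfrac{6}{20}<\tfrac{1}{3}<\tfrac{7}{20}$, not $\tfrac{6}{20}<\tfrac{7}{20}<\tfrac{1}{3}$, though your conclusion that $\tfrac{6}{20}$ is the minimum remains correct.
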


\begin{proof}
Assume $\ff_{3}(x)=0$. We first claim that $\ff_{1}(x)\ge 5$. 
Otherwise $\ff_{1}(x)\le 4$, which leads to $\ff_{2}(x)\le 6$. 
Then $d_{\mathcal{F}}(x)=\ff_{0}(x)+\ff_{1}(x)+\ff_{2}(x)+\ff_{3}(x)\le 11$ which contradicts with $\delta(\mathcal{F})\ge 12$. 
Thus using Algorithm~\ref{alg}, we can derive that 
\[
\U(x)\ge 1+\frac{\ff_{1}(x)}{2}+\frac{6\ff_{2}(x)}{20} 
\ge 1+\frac{\ff_{1}(x)}{2}+\frac{6(11-\ff_{1}(x))}{20} 
\ge 1+\frac{5}{2}+\frac{6\times 6}{20}=5.3.
\]
It is clear that $\U(x)=5.3$ holds if and only if all equations hold in the above inequality, which would imply the ``moreover" part of the lemma. 
\end{proof}

\begin{lem}
Let $V$ be an $n$-element set and $\mathcal{F} \subseteq 2^{V}$ be a minimal hereditary family with $\delta (\mathcal{F} )\ge 12$. 
Let $x\in V$. 
Assume that $\ff_{3}(x)=1$ and $\ff_{1}(x)=4$. 
Then the following hold: 
\begin{enumerate}[label =(\arabic*)]
\item Let $\q=\{x,y_{1},y_{2},y_{3}\}\in\Q(x)$ and $N(x)=\{x,y_{1},y_{2},y_{3},z\}$. Then $\mathcal{F}(x)=2^{\{y_{1},y_{2},y_{3}\}}\cup\{\{z\},\{y_{i},z\}:i\in [3]\}$. In particular, $N(x)\subseteq N(z)$ and $N(x)\subseteq N(y_{i})$ for $i\in [3]$. 
\item $\C(\q)=0$. 
\item $\U(x)=5.3$. 
\end{enumerate}
\label{lem105}
\end{lem}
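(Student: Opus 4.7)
To prove part~(1), my plan is a short double counting. The minimum degree condition gives $d_{\mathcal{F}}(x)=\ff_0(x)+\ff_1(x)+\ff_2(x)+\ff_3(x)\geq 12$, so with $\ff_0(x)=1$, $\ff_1(x)=4$ and $\ff_3(x)=1$ we obtain $\ff_2(x)\geq 6$. On the other hand $\mathcal{F}(x)\subseteq 2^{\{y_1,y_2,y_3,z\}}$ yields $\ff_2(x)\leq \binom{4}{2}=6$, forcing equality and placing every pair from $\{y_1,y_2,y_3,z\}$ in $\mathcal{F}(x)$. Heredity applied to $\q\in\mathcal{F}$ gives $2^{\{y_1,y_2,y_3\}}\subseteq\mathcal{F}(x)$, and $\ff_3(x)=1$ makes $\{y_1,y_2,y_3\}$ the unique $3$-element member of $\mathcal{F}(x)$. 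Combining these observations with $\{z\}\in\mathcal{F}(x)$ pins down $\mathcal{F}(x)$ as claimed, and the inclusions $N(x)\subseteq N(y_i)$ and $N(x)\subseteq N(z)$ follow immediately because each pair $\{y_i,z\}$ lies in $\mathcal{F}(x)$.

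For part~(2), I plan to rule out each vertex of $\q$ being \miniweight. Since $\ff_2(x)=6\ne 5$, $x$ itself is not \miniweight. For each $y_i$, part~(1) gives $N(y_i)\supseteq N(x)$; if $\ff_1(y_i)\geq 5$ we are done, so I may assume $N(y_i)=N(x)$ and $\ff_1(y_i)=4$. Any $4$-set of $\mathcal{F}$ through $y_i$ lies in $N(y_i)$, and the only possibilities are $\q$ and $\{y_1,y_2,y_3,z\}$, since any other candidate would contain $x$ and force $\ff_3(x)\geq 2$. Hence $\ff_3(y_i)\in\{1,2\}$. In the first case, $d_{\mathcal{F}}(y_i)\geq 12$ forces $\ff_2(y_i)\geq 6$, which already equals the maximum possible value. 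In the second, heredity from $\{y_1,y_2,y_3,z\}\in\mathcal{F}$ puts every $2$-subset of $N(y_i)\setminus\{y_i\}$ into $\mathcal{F}(y_i)$, again giving $\ff_2(y_i)=6$. Either way $\ff_2(y_i)\ne 5$, so $y_i$ is not \miniweight, and therefore $\C(\q)=0$.

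Finally, for part~(3) I plan to compute $\U(x)$ directly by summing $\w(x,F)$ over $F\ni x$, using Algorithm~\ref{alg} and the explicit form from part~(1). The set $\{x\}$ contributes $1$, the four $2$-sets contribute $4\cdot\tfrac{1}{2}=2$, and $\q$ contributes $\tfrac{1}{4}$. Of the six $3$-sets through $x$, the three subsets of $\q$ trigger step~(1) and contribute $\tfrac{1}{3}$ each. The remaining three sets $\{x,y_i,z\}$ lie in no $4$-set of $\mathcal{F}$, since $\q$ is the only one through $x$, so step~(2) applies. From the explicit form of $\mathcal{F}(x)$ we read off $d_{\mathcal{F}}(\{x,y_i\})=5$ and $d_{\mathcal{F}}(\{x,z\})=4$; regardless of whether $d_{\mathcal{F}}(\{y_i,z\})\leq 4$ or $\geq 5$, the two same-category pairs always share a vertex different from $x$, so the algorithm assigns $\w(x,\{x,y_i,z\})=\tfrac{7}{20}$ in every subcase. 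Summing yields $\U(x)=1+2+\tfrac{1}{4}+3\cdot\tfrac{1}{3}+3\cdot\tfrac{7}{20}=5.3$. The main obstacle I anticipate is precisely the external ambiguity appearing in parts~(2) and~(3), namely whether $\{y_1,y_2,y_3,z\}\in\mathcal{F}$ and the exact value of $d_{\mathcal{F}}(\{y_i,z\})$; in both situations the conclusion is robust, but one must check all the sub-cases carefully rather than leaving them implicit.
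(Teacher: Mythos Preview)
Your arguments for parts~(1) and~(3) match the paper's essentially verbatim; in particular, the case split you spell out for Algorithm~\ref{alg} in~(3) is precisely what the paper leaves implicit when it writes ``since $d_{\mathcal{F}}(\{x,y_i\})=5$ and $d_{\mathcal{F}}(\{x,z\})=4$, we have $\w(x,\{x,y_i,z\})=\tfrac{7}{20}$''.

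For part~(2), however, you take a different route. The paper dispatches each $y_i$ in one line via Lemma~\ref{lem100}(2): the triple $\{x,y_i,z\}$ lies in $\mathcal{F}$ by part~(1) but is contained in no $4$-set of $\mathcal{F}$ (the unique $4$-set through $x$ is $\q$, which misses $z$), and the structural description of a \miniweight\ link says every $3$-set through a \miniweight\ vertex must sit inside one of its two $4$-sets. Your approach instead shows $\ff_2(y_i)=6$ by a direct case split on $\ff_3(y_i)\in\{1,2\}$; this is more elementary (it does not invoke Lemma~\ref{lem100}) but longer. One small imprecision in your case $\ff_3(y_i)=2$: heredity from $\{y_1,y_2,y_3,z\}$ alone does \emph{not} place the pairs $\{x,y_j\}$, $\{x,y_k\}$, $\{x,z\}$ into $\mathcal{F}(y_i)$, since these involve $x\notin\{y_1,y_2,y_3,z\}$. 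You need heredity from $\q$ for the first two and part~(1) for $\{x,z\}$; with that fix all six pairs are accounted for and your conclusion $\ff_2(y_i)=6$ stands.
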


\begin{proof}
(1) Since $\ff_{3}(x)=1$, we have $\mathcal{F}(x)\subseteq 2^{\{y_{1},y_{2},y_{3},z\}}\setminus \{F\cup \{z\}:F\subseteq \{y_{1},y_{2},y_{3}\},|F|\ge 2\}=2^{\{y_{1},y_{2},y_{3}\}}\cup\{\{z\},\{y_{i},z\}:i\in [3]\}$. On the other hand, we have $|\mathcal{F}(x)|\ge 12=|2^{\{y_{1},y_{2},y_{3},z\}}\setminus \{F\cup \{z\}:F\subseteq \{y_{1},y_{2},y_{3}\},|F|\ge 2\}|$. So we have $\mathcal{F}(x)=2^{\{y_{1},y_{2},y_{3}\}}\cup\{\{z\},\{y_{i},z\}:i\in [3]\}$. 

(2) For any $i\in [3]$, because $\ff_{3}(x)=1$, there is no 4-element set in $\mathcal{F}$ containing $\{x,y_{i},z\}$. Then we know that for any $i\in [3]$, $y_{i}$ is not \miniweight. So we have $\C(\q)=0$. 

(3) For any $i\in [3]$, since $d_{\F}(x,y_{i})=5$ and $d_{\F}(x,z)=4$, we have $\w(x,\{x,y_{i},z\})=\frac{7}{20}$. Thus we have $\U(x)=1+4\times\frac{1}{2}+3\times\frac{7}{20}+3\times\frac{1}{3}+\frac{1}{4}=5.3$. 
\end{proof}

Now we are ready to give the proof of Lemma \ref{keykey}. 

\begin{proof}[\bf Proof of Lemma \ref{keykey}.]
Let $V$ be an $n$-element set and $\mathcal{F} \subseteq 2^{V}$ be a minimal hereditary family with $\delta (\mathcal{F} )\ge 12$. 
Pick arbitrary $x\in V$. 
We will prove all four items of this lemma simultaneously by analyzing all possible structures of the link $\mathcal{F}(x)$, i.e., the values of $\ff_i(x)$ for $0\leq i\leq 3$.

First, we assume that $x$ is \miniweight. 
By Lemma \ref{lem100} (1) and (3), we know that $|\Q(x)|=2$ and $\U(x)=5.3-\frac{2}{15}$. Moreover, for any $\q\in\Q(x)$ we have $\e(x,\q)=-\frac{1}{15}$. 
Combining these facts, we can derive easily that $\U(x)=5.3-\frac{2}{15}=5.3+\sum_{\q\in\Q(x)}\e(x,\q)=5.3+\e(x)$. 
This confirms all four items of this lemma in the case of \miniweight \ vertices.

Assume that $\ff_{3}(x)\ge 3$. 
In this case, we will show that $\U(x)>5.3+\e(x)$ where $\e(x)\geq 0$.\footnote{Here it is clear that $x$ is not \miniweight \ and thus by definition, $\e(x)\geq 0$.}
This is sufficient to establish all four items of this lemma in this particular case.
Since $\ff_{3}(x)\ge 3$, then we have $\ff_{1}(x)\ge 4$ and $\ff_{2}(x)\ge 6$. Moreover, $|\bigcup_{F\in\F(x),|F|=3}\binom{F}{2}|\ge 6$. 
If $\ff_{1}(x)\ge 5$, then we have $
\U(x)\ge 1+5\times\frac{1}{2}+6\times\frac{1}{3}+\frac{\ff_{3}(x)}{4} 
=5.5+\frac{\ff_{3}(x)}{4} 
>5.3+\frac{\ff_{3}(x)}{5} 
\ge 5.3+\e(x)$. 
If $\ff_{1}(x)=4$, then $3\le \ff_{3}(x)\le 4$. 
For any $\q\in\Q(x)$, we can take two different sets $P_{1},P_{2}\in\Q(x)\setminus\{\q\}$ such that $|\q\cap P_{1}\cap P_{2}|=1$. 
Assume $\{z\}=\q\cap P_{1}\cap P_{2}$. 
We have $\ff_{3}(z)\ge 3$, so $z$ is not \miniweight. 
This shows that $\C(\q)\le 2$ for any $\q\in\Q(x)$, thus implying that $\e(x,\q)\le \frac{1}{15}$ and $0\leq \e(x)\le \frac{\ff_{3}(x)}{15}$. 
Therefore, $\U(x)\ge 1+4\times\frac{1}{2}+6\times\frac{1}{3}+\frac{\ff_{3}(x)}{4} 
=5.3+(\frac{\ff_{3}(x)}{4}-\frac{3}{10}) 
>5.3+\frac{\ff_{3}(x)}{15} 
\ge 5.3+\e(x)$. 

Assume that $\ff_{3}(x)=0$. Then $\e(x)=0$. 
By Lemma \ref{lem102}, we see that $\U(x)\ge 5.3=5.3+\e(x)$ and $\U(x)=5.3$ only occurs when $\ff_{1}(x)=5$ and $\ff_{2}(x)=6$. 
Obviously in this case, if $\U(x)>5.3$, then $\U(x)>5.3+\e(x)$. 
So all four items of this lemma hold in this case.

Assume that $\ff_{3}(x)=1$.
Then $\ff_{1}(x)\ge 4$. 
We now show that if $\ff_{1}(x)\ge 5$, then $\U(x)>5.3+\e(x)$ where $\e(x)\geq 0$.
To see this, we first consider when $\ff_{1}(x)\ge 6$, which would imply  
that $\U(x)\ge 1+6\times\frac{1}{2}+\frac{6}{20}+3\times\frac{1}{3}+\frac{1}{4}=5.55>5.3+\frac{1}{5}\ge 5.3+\e(x)$. 
Now we may assume that $\ff_{1}(x)=5$ and thus $\ff_2(x)\geq 5$.
If $\ff_{2}(x)\ge 6$, then we have $\U(x)\ge 1+5\times\frac{1}{2}+3\times\frac{6}{20}+3\times\frac{1}{3}+\frac{1}{4}=5.65>5.3+\frac{1}{5}\ge 5.3+\e(x)$. 
So we may assume that $\ff_{1}(x)=\ff_{2}(x)=5$. 
Let $\q\in\Q(x)$ and $R=N(x)\setminus\q$, then $|R|=2$. We claim that there exist $y\in \q$ and $z\in R$, such that $\{y,z\}\in\F(x)$. Otherwise, $\ff_{2}(x)\le \binom{3}{2}+\binom{2}{2}=4<5$, which is a contradiction. 
Thus, we can get $\{x,y,z\}\in\F$ with $y\in \q$ and $z\in R$. 
Since $\ff_{3}(x)=1$, there is no 4-element set in $\F$ containing $\{x,y,z\}$. 
By Lemma \ref{lem100} (2), we know that $y\in\q$ can not be \miniweight, so $\C(\q)\le 2$ and thus $\e(x)\le \frac{1}{15}$. 
Since $d_{\F}(\{x,y\})\ge 5$ and $d_{\F}(\{x,z\})\le 3$, we also have $\w(x,\{x,y,z\})=\frac{7}{20}$. 
Finally we have $\U(x)\ge 1+5\times\frac{1}{2}+\frac{6}{20}+\frac{7}{20}+3\times\frac{1}{3}+\frac{1}{4}=5.4>5.3+\frac{1}{15}\ge 5.3+\e(x)$. 

Now consider that $\ff_{3}(x)=1$ and $\ff_{1}(x)=4$. 
Note that in this case, we can derive $\ff_{2}(x)=6$.
By Lemma \ref{lem105} (2) and (3), we have $\e(x)=0$ and $\U(x)=5.3=5.3+\e(x)$.  
So all four items of this lemma hold.

From now on, we may assume that $x$ is not a \miniweight \ vertex with $\ff_{3}(x)=2$. 
Clearly we have $\ff_{1}(x)\ge 4$.  
We make the following claim. 

\begin{claim}\label{claim-1}
Assume that $\ff_{3}(x)=2$ and $\Q(x)=\{\q_{1},\q_{2}\}$. If $x$ is not \miniweight\ and $|\q_{1}\cap\q_{2}|=3$, then $\C(\q_{i})\le 2$ for $i\in [2]$. 
\label{lem10225}
\end{claim}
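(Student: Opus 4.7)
The plan is to argue by contradiction: suppose $\C(\q_1)\ge 3$, the case $\C(\q_2)\ge 3$ being symmetric. Write $\q_1\cap\q_2=\{x,y,z\}$ so that $\q_1=\{x,y,z,a\}$ and $\q_2=\{x,y,z,b\}$ for distinct $a,b$ lying outside $\{x,y,z\}$. Since $x$ itself is not mini-weight by hypothesis, the three mini-weight vertices of $\q_1$ must be precisely $y$, $z$ and $a$.

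The driving tool will be Lemma~\ref{lem100}(2), which rigidly pins down the structure at any mini-weight vertex $v$: $\Q(v)$ has exactly two elements and they share three vertices. Applied to the mini-weight vertex $a$, it yields some $\q_3\neq\q_1$ with $\Q(a)=\{\q_1,\q_3\}$ and $|\q_1\cap\q_3|=3$. Since $a\in\q_1\cap\q_3$, the other two common elements must lie in $\{x,y,z\}$, so $\q_3\cap\{x,y,z\}$ is one of $\{x,y\}$, $\{x,z\}$ or $\{y,z\}$, and I would handle these three subcases in turn.

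In the first two subcases $x\in\q_3$, whence $\q_3\in\Q(x)=\{\q_1,\q_2\}$; since $\q_3\neq\q_1$ we get $\q_3=\q_2$, which contradicts $a\notin\q_2$. In the remaining subcase $y\in\q_3$; here I would apply Lemma~\ref{lem100}(2) a second time to the mini-weight vertex $y$, observing that $\ff_3(y)=2$ while $\q_1,\q_2$ already belong to $\Q(y)$, so $\Q(y)=\{\q_1,\q_2\}$, forcing $\q_3=\q_2$ once more and the same contradiction $a\notin\q_2$. This exhausts all cases and proves $\C(\q_1)\le 2$; the bound for $\q_2$ is identical after swapping the roles of $\q_1,\q_2$.

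I do not anticipate a real obstacle: the entire argument is driven by the rigidity statement of Lemma~\ref{lem100}(2), and all three subcases collapse to the same contradiction, namely that the "second" 4-set forced by mini-weight rigidity at $a$ or $y$ would have to coincide with $\q_2$ and yet contain the element $a\notin\q_2$. The only small point to keep in mind is that $a,b\notin\{x,y,z\}$ and $a\neq b$, which is immediate from $|\q_i|=4$ and $|\q_1\cap\q_2|=3$.
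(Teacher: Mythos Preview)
Your proof is correct and essentially identical to the paper's: both apply Lemma~\ref{lem100}(2) to the mini-weight vertex $a\in\q_1\setminus\q_2$ to obtain a second $4$-set $\q_3\in\Q(a)$ with $|\q_1\cap\q_3|=3$, and then reach a contradiction. The paper avoids your three subcases by noting at once that $\q_3\neq\q_2$ (since $a\in\q_3$ but $a\notin\q_2$) and then observing that some vertex of $\{y,z\}$ lies in all three distinct sets $\q_1,\q_2,\q_3$, contradicting $\ff_3=2$ for that mini-weight vertex.
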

\begin{proof}[Proof of Claim~\ref{claim-1}.]
Assume that $\q_{1}=\{x,y_{1},y_{2},y_{3}\}$ and $\q_{2}=\{x,y_{2},y_{3},y_{4}\}$. 
If there exists $i_{0}\in [2]$ such that $\C(\q_{i_{0}})=3$ (say $i_0=1$), 
then $y_{j}$ is \miniweight\ for $j\in [3]$. 
As $y_{1}$ is \miniweight, there exists $\q'\in \Q(y_{1})$ such that $|\q'\cap\q_{1}|=3$. 
Then $\q'\cap\{y_{2},y_{3}\}\ne\emptyset$, 
so we may assume $y_{2}\in\q'$. 
Since $\q'\ne \q_{1},\q_{2}$, we have $\ff_{3}(y_2)\ge 3$. 
But $y_{2}$ is also \miniweight, so $\ff_{3}(y_2)=2$, a contradiction. 
This shows that $\C(\q_{i})\le 2$ for $i\in [2]$. 
\end{proof}

Suppose that $\ff_{3}(x)=2$ and $\ff_{1}(x)\ge 5$. 
In this case, we will show that $\U(x)>5.3+\e(x)$ where $\e(x)\geq 0$. 
Assume that $\Q(x)=\{\q_{1},\q_{2}\}$. 
If $|\q_{1}\cap\q_{2}|=3$, by Claim \ref{lem10225}, we see that $\C(\q_{i})\le 2$ for $i\in [2]$, which implies that $\e(x)\le \frac{2}{15}$. 
Then we have $\U(x)\ge 1+5\times\frac{1}{2}+5\times\frac{1}{3}+\frac{2}{4}=5.3+\frac{11}{30}>5.3+\frac{2}{15}\ge 5.3+\e(x)$. 
Now let $|\q_{1}\cap\q_{2}|\le 2$. 
Then clearly $|\binom{\q_{1}\setminus\{x\}}{2}\cup\binom{\q_{2}\setminus\{x\}}{2}|\ge 6$, 
and in this case, we can still derive that $\U(x)\ge 1+5\times\frac{1}{2}+6\times\frac{1}{3}+\frac{2}{4}=6>5.3+\frac{2}{5}\ge 5.3+\e(x)$. 

It remains to consider the final case when $\ff_{3}(x)=2$ and $\ff_{1}(x)=4$.
In this case, again we will show that $\U(x)>5.3+\e(x)$. 
Since $x$ is not \miniweight, we can derive that $\ff_{2}(x)=6$. 
Assume that $\Q(x)=\{\{x,y_{1},y_{2},y_{3}\},\{x,y_{2},y_{3},y_{4}\}\}$.
This implies that $\{x,y_{1},y_{4}\}\in \F$. 
By Claim \ref{lem10225}, we can infer that $\e(x)\le\frac{2}{15}$. 
Hence, $\U(x)\ge 1+4\times\frac{1}{2}+\frac{6}{20}+5\times\frac{1}{3}+2\times\frac{1}{4}=5.3+\frac{1}{6}>5.3+\frac{2}{15}\ge 5.3+\e(x)$. 

The discussion above encompasses all possible values of $\ff_i(x)$ for $0\leq i\leq 3$,
where in each case, we have established that all four items of Lemma \ref{keykey} hold.
\end{proof}

\subsection{Completing the proof of Theorem \ref{thm000}}
In this subsection, we prove Theorem \ref{thm000}. 
Let $V$ be an $n$-element set and $\mathcal{F} \subseteq 2^{V}$ be a minimal hereditary family with $\delta (\mathcal{F} )\ge 12$ and $|\F|\le 5.3n+1$. 
We want to show that $10 \mid n$, $|\F|=5.3n+1$ and $\F\cong \mathcal{F}_{0}(n,5)$. 

By Lemma \ref{keykey} (1), for any $x\in V$ we have $\U(x)\ge 5.3+\e(x)$. 
Using \eqref{equ:sumU} and \eqref{equ:sum-e}, we can get 
\[
|\F|-1=\sum_{x\in V}\U(x)\ge \sum_{x\in V}(5.3+\e(x))=5.3n+\sum_{x\in V}\e(x)=5.3n.
\]
Thus we derive that $|\F|=5.3n+1$. Since $|\F|$ is an integer, we must have $10 \mid n$.

In what follows, we proceed to show $\F\cong \mathcal{F}_{0}(n,5)$.
We claim that $\U(x)=5.3$ for any $x\in V$. 
If not, since $\sum_{x\in V}\U(x)=5.3n$, there exists some $x'\in V$ with $\U(x')>5.3$. 
Then by Lemma \ref{keykey} (3), $\U(x')>5.3+\e(x')$, which leads to $\sum_{x\in V}\U(x)>5.3n$,  a contradiction.

Using Lemma \ref{keykey} (4), we see $\ff_{3}(x)\le 1$ holds for any $x\in V$. 
Thus every two distinct sets in $\Q$ are disjoint. 
Let $S:=V\setminus \bigcup_{\q\in\Q}\q$. Then we can get the following partition of $V$:
\[
V=S\cup \bigcup_{\q\in\Q}\q. 
\]

Let $\q\in\Q$. 
Since $\U(x)=5.3$ and $\ff_{3}(x)=1$ for any $x\in\q$, by Lemma \ref{lem105} (1), 
there exists a set $N_{\q}\subseteq V$ with $|N_{\q}|=5$ such that $N(x)=N_{\q}$ for any $x\in\q$. Note that $\q\subseteq N_\q$.
We set $s_{\q}$ to be the vertex in $N_{\q}\setminus \q$. 
Then by Lemma \ref{lem105} (1), we have $F\cup\{s_{\q}\}\in \F$ for any $F\in\binom{\q}{2}$. 
Now we claim that $s_{\q}\in S$. 
If not, then there exists $\q'\in\Q(s_{\q})$ with $\q\cap\q'=\emptyset$, from which we can derive that $\ff_{1}(s_{\q})\ge 7$. 
This is a contradiction to Lemma \ref{keykey} (4).
So we have $s_{\q}\in S$ for any $\q\in\Q$. 
Then we can define a map $\PHI:\Q \to S$ by letting $\PHI(Q)=s_Q$.

Next we will prove that $\PHI$ is bijective. 
First, we show $\PHI$ is injective. 
Suppose for a contradiction that there exist two different sets $\q,\q'\in\Q$ with $s_{\q}=s_{\q'}$. Then $N(s_{\q})\supseteq \q\cup\q'\cup\{s_{\q}\}$, implying that $\ff_{1}(s_{\q})\ge 8$. Again, this contradicts Lemma \ref{keykey} (4).

Now we show that $\PHI$ is surjective.
Let $x\in S$. We first claim that there exists some $y_{0}\in N(x)$ with $\ff_{3}(y_{0})=1$. 
If not, we have $\ff_{3}(y)=0$ for any $y\in N(x)$. 
Take any 2-set $\{y,z\}\in \F(x)$. 
By Lemma \ref{lem102}, we know that $\w(x,\{x,y,z\})=\frac{6}{20}$. 
Thus we get $\w(y,\{x,y,z\})=\frac{7}{20}$. 
By Lemma \ref{lem102}, we have $\U(y)>5.3$, which contradicts that $\U(y)=5.3$. 
Thus, there exists some $y_{0}\in N(x)$ with $\ff_{3}(y_{0})=1$. 
We take $\q_{0}\in\Q(y_{0})$. Then $x\in N(y_{0})=N_{\q_{0}}$, so we have $x=s_{\q_{0}}$. This tells us that $\PHI$ is surjective. We have proved that $\PHI$ is bijective.

Take any $s_{\q}\in S$. 
Then we have $N(s_{\q})\supseteq \q\cup\{s_{\q}\}$ and $|N(s_{\q})|=6$. 
Since $\PHI$ is a bijective mapping, the unique vertex in $N(s_{\q})\setminus (\q\cup \{s_{\q}\})$ must also be in $S$. 
Let us denote this vertex as $s_{\SIG(\q)}$. 
As a result, $s_{\q}$ and $s_{\SIG(\q)}$ are paired up.
From the above analysis, we can conclude that $\SIG$ is a mapping from $\Q$ to itself, satisfying the properties that $\SIG(\q)\ne \q$ and $\SIG^{2}(\q)=\q$ for any $\q\in\Q$. 
We observe that $\{s_{\q},s_{\SIG(\q)}\}\in \F$.
Let 
\[
\F_{0}=\Q \cup \{\{s_{\q},s_{\SIG(\q)}\}:\q\in\Q\} \cup \bigcup_{\q\in\Q}\{F\subseteq V:F\subseteq \q\cup\{s_{\q}\} \mbox{ and } |F|\le 3\}.
\]
Note that $\F_{0}\cong \mathcal{F}_{0}(n,5)$.
Our previous discussion implies that $\F\supseteq \F_{0}$. 
Since we also have that $|\F|=5.3n+1=|\F_{0}|$, 
it follows that $\F=\F_{0}\cong \mathcal{F}_{0}(n,5)$,
completing the proof of Theorem \ref{thm000}.
\qed

\section{Proof of Theorem~\ref{zhudingli}: reducing to Theorem~\ref{auxiliary_theorem}}

In this section, we provide a proof of Theorem \ref{zhudingli} by assuming Theorem \ref{auxiliary_theorem}. The upper bounds of Theorem \ref{zhudingli} have been established through Constructions \ref{construction_c<d} and \ref{construction_c=d}. Now, we focus on proving the corresponding lower bounds. 
To facilitate this, we first formalize an equivalent and unified statement (i.e., Theorem \ref{combined_theorem}) that encompasses both cases of Theorem \ref{zhudingli}.

We give the following definitions. 

\begin{definition} For $d,c\in\mathbb{N}$ with $1\leq c\leq d$, let
    $$\wb= \begin{cases}\frac{2^d-c}{d} & \text { for } 1\leq c\leq d-1, \\ \frac{2^d-d-\frac{1}{2}}{d} & \text { for } c=d.\end{cases}$$
\end{definition}

\begin{definition}
    For a hereditary family $ \g \subseteq 2^{[n]}$ and $x\in [n]$, we define the \emph{weight} of $x$ in $\g$ as $$\ww_\g(x)=\sum_{x\in F\in\F}\frac{1}{|F|}=\sum_{H\in \F(x)}\frac{1}{|H|+1}.$$ 
\end{definition}

Our proof is based on a standard approach as outlined in Section~2 that involves utilizing the concept of ``weight'' for vertices.
We often omit the subscript of $\ww_\g(x)$ and just call it the weight of $x$ or $\ww(x)$. 
Note that we have
\begin{align}
\sum_{x\in [n]}\ww_\g(x)=|\F\setminus \{\emptyset\}|=|\F|-1.
\end{align}

By utilizing the aforementioned definitions, explanations, and Proposition~\ref{dengjiatiaojian}, we observe that Theorem \ref{zhudingli} can be equivalently stated as follows.

\begin{theorem}\label{combined_theorem}
    Given $n,d,c\in\mathbb{N}$ with $d\ge 50$ and $1\leq c\leq d$. For any hereditary family $\g \subseteq 2^{[n]}$ with $\md(\g)\geq 2^{d-1}-c+1$, we have 
$$\sum_{x\in [n]}\ww_\g(x)\geq \wb n.$$
\end{theorem}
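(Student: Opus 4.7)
The plan is to reduce Theorem~\ref{combined_theorem} to the auxiliary statement Theorem~\ref{auxiliary_theorem} from Section~5 via a global weight-redistribution argument, following the same broad template as the proof of Theorem~\ref{thm000} for $d=5$. The baseline estimate comes from applying Lemma~\ref{appoximation_lemma} to each link $\mathcal{F}(x)$: since $|\mathcal{F}(x)|=d_\mathcal{F}(x)\geq 2^{d-1}-c+1$, one obtains
\[
\omega_\mathcal{F}(x)\;\geq\;W\bigl(2^{d-1}-c+1\bigr)\;\geq\;\wb-\frac{(c-1)\log(c-1)}{d(d-\log(c-1))},
\]
where the second inequality is Lemma~\ref{lem1}. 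The deficit on the right is of order $\frac{\log d}{d}$, small but strictly positive once $c\geq 2$, so pure vertex-by-vertex averaging falls short of $\wb\cdot n$ and the deficit has to be paid back through structural surpluses at certain vertices.

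Accordingly, I would partition $[n]$ into \emph{near-tight} vertices and the rest. Call $x$ near-tight if $|N(x)|=d$ and the link $\mathcal{F}(x)$ is essentially the full power set $2^{N(x)\setminus\{x\}}$ with only $O(c)$ sets removed; for such an $x$ the weight $\omega_\mathcal{F}(x)$ is pinned down very accurately. For every non-near-tight vertex, at least one of two surpluses applies: either $|N(x)|\geq d+1$ and the ``moreover'' clause of Lemma~\ref{appoximation_lemma} contributes the bonus $\tfrac16(|N(x)|-d)$, or the missing sets of $\mathcal{F}(x)$ deviate from the colex-initial segment and Katona's inequality becomes strict. Either way, non-near-tight vertices carry a strict surplus above $\wb$ that is available for redistribution.

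The structural heart of the reduction is Theorem~\ref{auxiliary_theorem}, which I expect to assert that the near-tight vertices assemble into disjoint blocks of size $d$ whose internal structure matches one of Constructions~\ref{construction_c<d} or~\ref{construction_c=d}. Once this rigidity is in hand, I would define a signed perturbation $\varepsilon:[n]\to\mathbb{R}$ with $\sum_{x\in[n]}\varepsilon(x)=0$ that routes the surplus from each non-near-tight vertex onto the small deficits of the near-tight vertices it is structurally attached to, in direct analogy with the $\varepsilon(x,Q)$ of Section~3. Verifying $\omega_\mathcal{F}(x)\geq \wb+\varepsilon(x)$ for every $x$ is then a finite case analysis based on the possible values of $|N(x)|$ and on where $\mathcal{F}(x)$ sits relative to the colex-initial segment. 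Summing over $x$ yields the theorem.

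The main obstacle is the $c=d$ case, where $\wb$ exceeds $\frac{2^d-d}{d}$ by exactly $\frac{1}{2d}$ and no isolated block can meet the target: blocks must be matched into pairs via an extra edge between designated ``pair vertices'' in order to supply the missing $\tfrac12$, exactly as in Construction~\ref{construction_c=d}. Extracting this global pairing from local information is what makes Theorem~\ref{auxiliary_theorem} nontrivial, and is where the hypothesis $d\geq 50$ enters: the per-vertex surplus $\tfrac16(|N(x)|-d)$ available at boundary vertices must dominate both the baseline $\frac{\log d}{d}$ deficit of Lemma~\ref{lem1} and the additional bookkeeping required to force every block into a pair without double counting.
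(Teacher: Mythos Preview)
Your broad instinct---use the $\tfrac16(|N(x)|-d)$ surplus from large-neighbourhood vertices to cover the deficit at tight ones, and defer the hard local analysis to Theorem~\ref{auxiliary_theorem}---is correct, but the reduction you sketch has two genuine gaps.

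First, you have misread what Theorem~\ref{auxiliary_theorem} says. It is \emph{not} a rigidity statement asserting that near-tight vertices assemble into blocks matching Constructions~\ref{construction_c<d} or~\ref{construction_c=d}; it is purely a weight inequality: for every \emph{isolated pile} $P$ (a $d$-set with $P\subseteq N(y)$ for all $y\in P$ and $P=N(z)$ for some $z\in P$, disjoint from all other piles) one has $\sum_{x\in P}\omega(x)\geq \wb d$. The paper's reduction therefore does not route through any structural classification; it partitions $[n]$ into (i) vertices $J$ lying in no pile, (ii) vertices $K$ lying in some intersecting pile, and (iii) the union of isolated piles, and shows each part separately has average weight at least $\wb$. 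Part~(i) is immediate from Lemma~\ref{weight_lemma}(2) once one checks (Lemma~\ref{pile_lemma}) that every bad vertex lies in the pile $N(x)$, so $J$ consists entirely of good vertices. Part~(iii) is exactly Theorem~\ref{auxiliary_theorem}. Part~(ii) is the only place redistribution occurs, and it is local to $K$: vertices in two or more piles have $|N(x)|-d\geq \sum_{P\ni x}\theta_P-\min_P\theta_P$ (where $\theta_P$ counts vertices of $P$ in a unique pile), and a short counting argument (Claim~\ref{Claim:Px>1}) shows their surplus exactly covers the at-most-$\tfrac1{18}$ deficit of the $K$-vertices in a unique pile. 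No global $\varepsilon$ summing to zero is needed.

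Second, your worry about the $c=d$ case is misplaced. You anticipate having to extract a global matching of blocks into pairs, but this is not how the proof works: Theorem~\ref{auxiliary_theorem} shows that \emph{each isolated pile on its own} already has total weight at least $2^d-d-\tfrac12$, because the pile must contain at least one good vertex whose external weight contributes at least $\tfrac12$. Where the other endpoint of that external edge sits is irrelevant to the bound. The pairing in Construction~\ref{construction_c=d} is a feature of the extremal example, not a step in the proof. Likewise, your attempt to transplant the $\varepsilon(x,Q)$ mechanism of Section~3 is not the right template here; the paper itself notes (Remark after Lemma~\ref{pile_lemma}) that the $d=5$ argument is structurally different precisely because Lemma~\ref{pile_lemma} fails there.
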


It is worth noting that when $c=d$, we can demonstrate that the only hereditary family $\mathcal{F}$ satisfying $|\mathcal{F}|-1=\sum_{x\in [n]}\ww_\g(x)=\wb n=\frac{2^d-d-1/2}{d}n$ is the family $\mathcal{F}_0(n,d)$ defined in Construction \ref{construction_c=d} (for detailed explanations, see the remarks before Subsection~\ref{subsec:iso-pile}).

From now on, we always assume that $\mathcal{F}\subseteq 2^{[n]}$ is a hereditary family with $\delta(\mathcal{F})\geqslant 2^{d-1}-c+1$ where $d\geq 50$ and $c\in [d]$. 
It suffices to prove the average weight in $\g$ is at least $\wb$. 

\begin{definition}$\ $
A vertex $x\in [n]$ is called \emph{good} in $\mathcal{F}$ if $|N(x)|\geq d+1$ and \emph{bad} in $\mathcal{F}$ if $|N(x)|=d$.
\end{definition}

For any vertex $x\in [n]$, we have $|N(x)|\geq 1+ \lceil\log(\md(\g))  \rceil\geq 1+ \lceil\log(2^{d-1}-c+1) \rceil=d$. 
Therefore every vertex is either good or bad in $\mathcal{F}$.

The following lemma provides a lower bound on the weight of $x\in [n]$ based on its size. Specifically, it shows that if $x$ is good in $\mathcal{F}$, then its weight $\ww(x)>\wb$.

\begin{lemma}\label{weight_lemma}
Let $n$ and $d\geq 50$ be two positive integers and let $\F\subseteq 2^{[n]}$ be a hereditary family. Let $x\in [n]$ be a vertex with $d_{\F}(x)\geqslant 2^{d-1}-c+1$, where $c\in [d]$. Then the following hold:
    \begin{enumerate}
        \item if $|N(x)|=d$, then $\ww(x)> \wb-\frac{1}{18}$; and
        \item if $|N(x)|>d$, then $\ww(x)> \wb-\frac{1}{18}+\frac{|N(x)|-d}{6}> \wb$.
    \end{enumerate}
    
\end{lemma}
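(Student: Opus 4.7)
The plan is to deduce the bound on $\ww(x)$ by analysing the link $\F(x)$, which is itself a hereditary family with $|\F(x)|=d_{\F}(x)\geq 2^{d-1}-c+1$ and which (since $\F$ is hereditary) has exactly $|N(x)|-1$ non-isolated vertices. I would apply Lemma~\ref{appoximation_lemma} with parameters shifted by one, namely $d-1$ in place of $d$ and $c-1$ in place of $c$ (when $c\geq 2$; for $c=1$ one can invoke Theorem~\ref{katona} directly, or Lemma~\ref{appoximation_lemma} with $c'=1$, losing only $1/d$). This yields both
\[
\ww(x)\geq W(2^{d-1}-c+1) \qquad\text{and}\qquad \ww(x)\geq W(2^{d-1}-c+1)+\tfrac{|N(x)|-d}{6} \text{ when } |N(x)|\geq d+1.
\]
The strengthened bound is legitimate because the reference family $\mathcal{R}(2^{d-1}-c+1)$ has exactly $r_1=d-1$ singletons (the excluded sets $[d-1]\setminus H$ for $H\in\mathcal{R}(c-1)$ all have size at least $d-2$), so the threshold ``$e>d-1$'' in the moreover part of Lemma~\ref{appoximation_lemma} applied at level $d-1$ is exactly $|N(x)|-1>d-1$.

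The next step is to compute $W(2^{d-1}-c+1)$ explicitly via the colex identity~\eqref{equ:R+R^c}:
\[
W(2^{d-1}-c+1)=\frac{2^d-1}{d}-\sum_{H\in\mathcal{R}(c-1)}\frac{1}{d-|H|}.
\]
The crucial combinatorial point is that since $c-1\leq d-1$, the first $c-1$ subsets of $\mathbb{N}^+$ in colex order are simply $\emptyset,\{1\},\ldots,\{c-2\}$ (all of size $\leq 1$), so the sum collapses to $\tfrac{1}{d}+\tfrac{c-2}{d-1}$, giving $W(2^{d-1}-c+1)=\tfrac{2^d-2}{d}-\tfrac{c-2}{d-1}$ for $c\geq 2$.

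To establish part~(1), I compare with $\wb$. For $1\leq c\leq d-1$ where $\wb=(2^d-c)/d$, a short algebraic manipulation gives
\[
W(2^{d-1}-c+1)-\wb=-\frac{c-2}{d(d-1)}\;\geq\;-\frac{d-3}{d(d-1)}\;>\;-\frac{1}{d-1}\;\geq\;-\frac{1}{49}\;>\;-\frac{1}{18}
\]
for $d\geq 50$. The boundary case $c=d$ with $\wb=(2^d-d-\tfrac{1}{2})/d$ carrying the extra $-1/(2d)$ correction requires a parallel but distinct computation, yielding $W(2^{d-1}-d+1)-\wb=-(d-3)/(2d(d-1))>-1/18$. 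Part~(2) is then immediate: for $|N(x)|\geq d+1$, the additional term $(|N(x)|-d)/6\geq 1/6$ dominates the $1/18$ deficit, giving $\ww(x)>\wb-\tfrac{1}{18}+\tfrac{1}{6}>\wb$.

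There is no deep obstacle; the main challenges are bookkeeping in nature. I must track the off-by-one shifts so that Lemma~\ref{appoximation_lemma} is applied at level $d-1$ (matching the $r_1=d-1$ threshold for the moreover term), isolate the degenerate case $c=1$ where $c-1=0$ is not a legitimate parameter of Lemma~\ref{appoximation_lemma}, and separately verify the calculation at the boundary $c=d$ where the definition of $\wb$ carries the distinct $-1/(2d)$ correction.
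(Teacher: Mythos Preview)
Your overall structure matches the paper's: apply Lemma~\ref{appoximation_lemma} to the link $\F(x)$ (with the parameter shift $d\mapsto d-1$, $c\mapsto c-1$) to get $\ww(x)\geq W(2^{d-1}-c+1)+\tfrac{1}{6}(|N(x)|-d)$, then show $W(2^{d-1}-c+1)>\wb-\tfrac{1}{18}$. That reduction is fine.

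The gap is in your ``crucial combinatorial point''. You assert that the first $c-1$ subsets of $\mathbb{N}^+$ in colex order are $\emptyset,\{1\},\ldots,\{c-2\}$. This is false as soon as $c-1\geq 4$: the colex order begins $\emptyset,\{1\},\{2\},\{1,2\},\{3\},\{1,3\},\ldots$, so already $\mathcal{R}(4)$ contains $\{1,2\}$. Since $c$ ranges up to $d\geq 50$, the family $\mathcal{R}(c-1)$ generically contains sets of size up to $\lfloor\log(c-1)\rfloor$, and your collapsed sum $\tfrac{1}{d}+\tfrac{c-2}{d-1}$ is a strict \emph{underestimate} of $\sum_{H\in\mathcal{R}(c-1)}\tfrac{1}{d-|H|}$. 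Consequently the clean identity $W(2^{d-1}-c+1)-\wb=-\tfrac{c-2}{d(d-1)}$ and the ensuing $-1/(d-1)$ bound are not valid.

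This is not just a cosmetic slip: with the correct $\mathcal{R}(c-1)$, a crude bound via $|H|\leq\log(c-1)$ gives only $\wb-W(2^{d-1}-c+1)\leq \tfrac{(c-1)\log(d-1)}{d(d-\log(d-1))}$, which at $c=d=50$ is about $0.12$, well above $1/18$. The paper therefore handles this step differently: it bounds the worst case $c=d$ by $h(d)=\sum_{H\in\mathcal{R}(d-1)}\bigl(\tfrac{1}{d-|H|}-\tfrac{1}{d}\bigr)$ and devotes Appendix~\ref{compute} to a case-by-case verification (asymptotic analysis for $d\geq 133$, Jensen-type estimates on subranges, and a handful of explicit evaluations) that $h(d)<\tfrac{1}{18}$ for all $d\geq 50$. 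Your argument would need a replacement for this computation.
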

\begin{proof}
Since $|\F(x)|=d_{\F}(x)\geqslant 2^{d-1}-c+1$, by Lemma \ref{lem2}, we have 
\[
\ww(x)=\sum_{H\in \F(x)}\frac{1}{|H|+1}\geqslant W(2^{d-1}-c+1)+\frac{1}{6}(|N(x)|-d).
\]
So in both cases, it is sufficient for us to prove that $W(2^{d-1}-c+1)>\wb-\frac{1}{18}$ for $d\geqslant 50$ and $c\in [d]$. 
For convenience, we write $\mathcal{H}_{c}=2^{[d-1]}\setminus \mathcal{R}(2^{d-1}-c+1)$. 
Then for any $c\in [d]$, 
\[
\wb-W(2^{d-1}-c+1)\le \frac{2^{d}-c}{d}-W(2^{d-1}-c+1)=\sum_{H\in \mathcal{H}_{c}}\left(\frac{1}{|H|+1}-\frac{1}{d}\right)\leqslant \sum_{H\in \mathcal{H}_{d}}\left(\frac{1}{|H|+1}-\frac{1}{d}\right). 
\]
Using \eqref{equ:R+R^c}, we have $\mathcal{H}_{d}=\{ [d-1]\setminus H : H\in \mathcal{R}(d-1) \}$. 
Then it implies that 
\[
\wb-W(2^{d-1}-c+1)\leqslant \sum_{H\in \mathcal{H}_{d}}\left(\frac{1}{|H|+1}-\frac{1}{d}\right)\leqslant \sum_{H\in \mathcal{R}(d-1)}\left(\frac{1}{d-|H|}-\frac{1}{d}\right). 
\]
By the calculation in Appendix \ref{compute}, we can derive that whenever $d\ge 50$, 
\[\wb-W(2^{d-1}-c+1)\le \sum_{H\in \mathcal{R}(d-1)}\left(\frac{1}{d-|H|}-\frac{1}{d}\right)<\frac{1}{18}. \]
This shows that for $d\geqslant 50$ and $c\in [d]$, it holds that $W(2^{d-1}-c+1)>\wb-\frac{1}{18}$.  
\end{proof}

Before we proceed further, let us introduce some additional definitions that play a crucial role in the proof of Theorem \ref{combined_theorem}.

\begin{definition}
Let $P\subseteq [n]$ be a subset with $|P|=d$. We say that $P$ is a \emph{pile} of $\mathcal{F}$ if 
\begin{itemize}
   \item for any $y\in P$, we have $P\subseteq N(y)$, and
   \item there exists $z\in P$ such that $P=N(z)$. 
\end{itemize}
\end{definition}

\begin{definition}~
\begin{itemize}
\item A pile $P$ is an \emph{intersecting pile} if it intersects another pile. 
\item A pile $P$ is an \emph{isolated pile} if it does not intersect any other piles.\footnote{Piga and Schülke \cite{PS2021} employ a concept similar to our definition of piles, known as clusters. However, there are important distinctions between the two. One notable difference is that their definition of clusters allows for the presence of bad vertices outside of clusters. As a result, they need to redistribute at least $\frac12-\frac{c-1}{d-c}\geq \frac16$ weight to these bad vertices, which ultimately leads to the bound $c\leq \frac{d}{4}$. In contrast, our definition of piles ensures that there are no bad vertices outside of piles (as shown in Lemma~\ref{pile_lemma}).}
\end{itemize}
\end{definition}

According to this definition, we can partition the vertex set $[n]$ into three parts (also see \eqref{equ:3parts} in the proof of Theorem \ref{combined_theorem} below):
\begin{itemize}
\item The first part $J$ consists of vertices that are not contained in any pile; 
\item The second part $K$ consists of vertices that are contained in intersecting piles; and 
\item The third part consists of all remaining vertices that are contained in isolated piles. 
\end{itemize}
Our approach for Theorem \ref{combined_theorem} proceeds to demonstrate that the average weight in each part is at least $\wb$, respectively.

The upcoming lemma establishes a connection between bad vertices and piles. 
Combining this lemma with Lemma~\ref{weight_lemma}, we can conclude that the average weight in the first part $J$ is at least $\wb$.

\begin{lemma}\label{pile_lemma}
    For $d\geq 6$, every bad vertex $x$ is in exactly one pile, which is $N(x)$. In particular, this implies that every vertex in $J$ is a good vertex in $\mathcal{F}$.
\end{lemma}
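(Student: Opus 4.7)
The plan is to take any bad vertex $x$, set $P := N(x)$, and show that $P$ itself is a pile and is moreover the unique pile containing $x$. Two of the three pile conditions come for free: $|P| = d$ because $x$ is bad, and the existence clause ``there is $z \in P$ with $P = N(z)$'' is witnessed by $z = x$. So the whole content of the lemma reduces to checking the remaining pile clause, namely $N(x) \subseteq N(y)$ for every $y \in N(x)$, followed by a short uniqueness argument.

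For the containment clause I would argue by contradiction. Suppose some $y, z \in N(x)$ satisfy $z \notin N(y)$, i.e.\ no edge of $\mathcal{F}$ contains both $y$ and $z$. Since $\mathcal{F}$ is hereditary, no member of the link $\mathcal{F}(x)$ can contain $\{y,z\}$ either: if $H \in \mathcal{F}(x)$ with $\{y,z\} \subseteq H$, then $H \cup \{x\} \in \mathcal{F}$ and heredity would yield $\{y,z\} \in \mathcal{F}$, contradicting $z \notin N(y)$. Because $\mathcal{F}(x) \subseteq 2^{N(x) \setminus \{x\}}$ and $|N(x) \setminus \{x\}| = d - 1$, counting subsets of a $(d-1)$-set that avoid $\{y,z\}$ gives
\[
d_{\mathcal{F}}(x) \;=\; |\mathcal{F}(x)| \;\leq\; 2^{d-1} - 2^{d-3} \;=\; 3\cdot 2^{d-3}.
\]
Combined with $d_{\mathcal{F}}(x) \geq 2^{d-1} - c + 1 \geq 2^{d-1} - d + 1$, this forces $2^{d-3} \leq d-1$, which fails for every $d \geq 6$. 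The contradiction shows $N(x) \subseteq N(y)$ for all $y \in N(x)$, so $P = N(x)$ is indeed a pile.

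Uniqueness is then immediate: any pile $P'$ containing $x$ must satisfy $P' \subseteq N(x)$ by applying the first pile axiom at $y = x$, and $|P'| = d = |N(x)|$ forces $P' = N(x)$. For the ``in particular'' statement, the minimum degree hypothesis guarantees $d_{\mathcal{F}}(x) \geq 1$ for $d \geq 6$ and $c \leq d$, so every bad vertex is non-isolated and lies in its own neighborhood; hence if $x \in J$ were bad, $x$ would belong to the pile $N(x)$, contradicting $x \in J$, and so every $x \in J$ must be good. The main (really only) obstacle is the counting threshold $2^{d-3} > d-1$: this is just tight at $d = 5$ (where $2^{2} = 4 = d-1$) but holds comfortably once $d \geq 6$, matching the hypothesis exactly.
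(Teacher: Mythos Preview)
Your proof is correct and follows essentially the same approach as the paper's: reduce to showing $N(x)\subseteq N(y)$ for every $y\in N(x)$, argue by contradiction that a ``missing'' pair $\{y,z\}$ in the link forces $d_{\mathcal{F}}(x)\le 2^{d-1}-2^{d-3}$, and compare with the minimum-degree lower bound $2^{d-1}-d+1$ to obtain a contradiction for $d\ge 6$. Your write-up is slightly more explicit about the uniqueness step and the ``in particular'' clause, but the argument is the same.
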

\begin{proof}
Let $x$ be a bad vertex in $\mathcal{F}$.
If $x$ is contained in a pile $P$, then $P\subseteq N(x)$ and $d=|P|\leq |N(x)|=d$, implying that $P=N(x)$. 
So it suffices to prove that $N(x)$ is a pile, that is, 
for all $y \in N(x)$, it holds that $N(x)\subseteq N(y)$.
Suppose this is not the case. Then there exists $y\in N(x)$ such that $N(x)\not\subseteq N(y)$. So there exists $z\in N(x)\setminus \{x,y\}$ such that $z\not\in N(y)$. 
This implies that $\{y,z\}\not \in \g(x)$. 
Notice that $\g(x)$ is a hereditary family, so for any $F\supseteq \{y,z\}$, $F\notin \g(x)$. Thus we have $d_\g(x)\leq 2^{d-1}-2^{d-3}<2^{d-1}-d+1\leq \md(\g)$ when $d\geq 6$.
This is a contradiction.

The second conclusion follows directly from the definition of $J$.
\end{proof}

\begin{remark}
We would like to highlight that Lemma \ref{pile_lemma} is not applicable for $d=5$. 
This distinction is one of the reasons why we need separate proofs for Theorems \ref{zhudingli_d=5} and \ref{zhudingli}.
\end{remark}

The following result asserts that for any isolated pile $P$, the average weight in $P$ is at least $\wb$. 

\begin{theorem}\label{auxiliary_theorem}
    Let $n,d,c\in\mathbb{N}$ with $d\ge 50$ and $1\leq c\leq d$. For any hereditary family $ \g \subseteq 2^{[n]}$ with $\md(\g)\geq 2^{d-1}-c+1$. If $P\subseteq [n]$ is an isolated pile in $\g$, then
    $$\sum_{x\in P}\ww_\g(x)\geq \wb d.$$
\end{theorem}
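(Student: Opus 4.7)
Write $\mathcal{F}_P := \{F\in\mathcal{F}: F\subseteq P\}$ and, for $y\in P$, $\omega_{out}(y) := \sum_{y\in F\in\mathcal{F},\, F\not\subseteq P}1/|F|$. A direct double count yields the decomposition
\[
\sum_{y\in P}\omega_{\mathcal{F}}(y) \;=\; (|\mathcal{F}_P|-1) + \alpha, \qquad \alpha := \sum_{y\in P}\omega_{out}(y),
\]
so the goal becomes $|\mathcal{F}_P|-1+\alpha\geq d\,\mathfrak{B}_c$. The plan is to partition $P = B\sqcup A$ into bad and good vertices; by Lemma~\ref{pile_lemma}, every $y\in B$ satisfies $N(y)=P$, hence $\mathcal{F}(y)\subseteq 2^{P\setminus\{y\}}$ with $|\mathcal{F}(y)|\geq 2^{d-1}-c+1$ and $\omega_{out}(y)=0$; moreover $B\neq\emptyset$ because a pile by definition contains some $z\in P$ with $N(z)=P$.

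The first main step is to upper bound the deficit $c' := 2^d - |\mathcal{F}_P|$ by analysing the upward-closed complement $\mathcal{G}' := 2^P\setminus\mathcal{F}_P$. The degree hypothesis gives $|\mathcal{G}'(y)|\leq c-1$ for $y\in B$ and $|\mathcal{G}'(y)|\leq c-1+\Delta_y$ for $y\in A$, where $\Delta_y := d_\mathcal{F}(y)-d_{\mathcal{F}_P}(y)$ counts the external edges through $y$. Upward closure in $2^P$ gives $|\mathcal{G}'(y)|\geq |\mathcal{G}'|/2$ for every $y\in P$, so picking any $y\in B$ already forces $|\mathcal{G}'|\leq 2(c-1)$; a sharper quantitative bound on $c'$ in terms of $c$, $d$, $b=|B|$ and $\Delta := \sum_{y\in A}\Delta_y$ comes from double counting $\sum_{F\in\mathcal{G}'}|F|=\sum_{y\in P}|\mathcal{G}'(y)|\leq d(c-1)+\Delta$. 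In the all-bad case $A=\emptyset$ this collapses to $c'\leq c-1$ for $c\leq d-1$ and $c'\leq d-1$ for $c=d$, both of which settle the theorem directly (the $1/2$ slack in $\mathfrak{B}_d$ absorbing the integrality of $c'$).

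The second main step uses that $P$ is \emph{isolated} to lower bound $\alpha$. The key observation is that for any $y\in A$ and any external neighbour $v\in N(y)\setminus P$, the vertex $v$ must also be good: otherwise $N(v)$ would be a pile meeting $P$ at $y$, contradicting the isolatedness of $P$. Consequently, by the hereditary property, the size-$2$ edge $\{y,v\}\in\mathcal{F}$ contributes $1/2$ to $\omega_{out}(y)$, yielding $\omega_{out}(y)\geq e(y)/2$ with $e(y) := |N(y)|-d\geq 1$, and hence $\alpha \geq E/2$ where $E := \sum_{y\in A}e(y)$. In parallel, summing the vertexwise bound from Lemma~\ref{weight_lemma} gives $\sum_{y\in P}\omega_{\mathcal{F}}(y) > d\bigl(\mathfrak{B}_c - \tfrac{1}{18}\bigr)+\tfrac{E}{6}$, which already proves the theorem whenever $E \geq d/3$.

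The main obstacle is the residual regime $E<d/3$ with $A\neq\emptyset$: here Lemma~\ref{weight_lemma} alone is insufficient, while a nonzero $\Delta$ can inflate $c'$ above $c-1$. I plan to finish by a careful case split on $b$ and $\Delta$, combining the quantitative bound on $c'$ with $\alpha\geq E/2$ and, when needed, with the sharper estimate $\omega_{out}(y)\geq \Delta_y/|N(y)|$ (since any external edge through $y$ lies in $N(y)$, of size $d+e(y)$), ultimately establishing $c'-\alpha\leq c-1$ for $c\leq d-1$ and $c'-\alpha\leq d-\tfrac{1}{2}$ for $c=d$. The equality cases then line up precisely with Constructions~\ref{construction_c<d} (an all-bad isolated pile with $\alpha=0$ and $c'=c-1$) and~\ref{construction_c=d} (one good vertex with a single external size-$2$ edge, $\alpha=1/2$, $c'=d$), which clarifies the dichotomy in $\mathfrak{B}_c$ between the $c\leq d-1$ and $c=d$ cases.
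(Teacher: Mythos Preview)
Your decomposition $\sum_{y\in P}\omega_{\mathcal F}(y)=(|\mathcal F_P|-1)+\alpha$ and the handling of the two extreme regimes ($A=\emptyset$ and $E\ge d/3$) are exactly how the paper begins. However, the residual regime---which you correctly flag as the main obstacle---cannot be closed with the tools you list.

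First, a small gap: in the all-bad case you assert $c'\le c-1$ but do not justify it, and the double count alone does not give it. The argument is short: if $c'\ge c$ then every (bad) $x$ has $d_{\mathcal N}(x)\ge c'-c+1\ge1$, so $\{x\}\in\mathcal N$ for all $x\in[d]$ and hence $c'\ge d+1$; but then at most $c'-d-1$ sets of $\mathcal N$ have size $\ge2$, whence $d_{\mathcal N}(x)\le 1+(c'-d-1)=c'-d$, forcing $c\ge d+1$, a contradiction.

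The real problem is your estimate $\omega_{out}(y)\ge \Delta_y/|N(y)|$: its denominator is of order $d$, and this is too crude. After the double count and a bound $|N_i|\le f_u$ (which you could prove along the lines of Lemma~\ref{size_lemma}), what you would need is roughly
\[
\alpha \;\ge\; \frac{f_u(c-1)+\Delta}{d-f_u},
\]
and with $\alpha\lesssim\Delta/d$ or $\alpha\ge E/2$ this fails as soon as $f_u(c-1)>0$ and $E$ is small. The paper's decisive ingredient is Lemma~\ref{externalweight}, which replaces the denominator $|N(y)|\sim d$ by $3+\log c$: if some external edge through $y$ has size exceeding $\log c+3$, then $2^{\lceil\log c\rceil}$ of its subsets containing a fixed pair $\{y,v\}$ already supply weight $\ge c/(\log c+3)$. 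This $d/\log d$ gain is exactly what drives Lemma~\ref{dijiang} (at most $7$ good vertices), which in turn forces $|N_i|\le3$, then $c'\le d+4$, and finally---via a count of bad vertices with $d_{\mathcal N}(x)=1$---the key bound $c'\le c$. Only after all this structure does the elementary estimate $\alpha\ge u/2$ finish the proof. Your plan has no analogue of Lemma~\ref{externalweight}, and without it the ``careful case split on $b$ and $\Delta$'' cannot succeed.

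As an aside: your observation that external neighbours of $P$ must be good is correct but unused---the bound $\omega_{out}(y)\ge e(y)/2$ needs only heredity. Indeed the paper's proof never invokes isolatedness.
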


We will postpone the proof of this result in Section \ref{sec5}. 
Now assuming Theorem~\ref{auxiliary_theorem}, we are ready to present the proof of Theorem \ref{combined_theorem}  (i.e., Theorem \ref{zhudingli}).

\begin{proof}[\bf Proof of Theorem \ref{combined_theorem} (assuming Theorem \ref{auxiliary_theorem}).]
Let $n$, $d$, $c$ and $\F$ be given as in Theorem \ref{combined_theorem}. Let $\mathcal{P}=\{P\subseteq [n]: P\mbox{ is a pile} \}$ be the set of all piles. For $x\in [n]$, let $\mathcal{P}(x)=\{P\in \mathcal{P}: x\in P\}$ be the set of piles containing $x$. 
In this notion, Lemma \ref{pile_lemma} says that every bad vertex $z\in [n]$ satisfies $|\mathcal{P}(z)|=1$. 
To get a partition of $[n]$, we consider the following partition of $\mathcal{P}$:
\begin{align}
\mathcal{P}_{1}&=\{P\in \mathcal{P}: P\text{ is an isolated pile in } \mathcal{F}\}, \notag \\
\mathcal{P}_{2}&=\{P\in \mathcal{P}: P\text{ is an intersecting pile in } \mathcal{F}\}. \notag 
\end{align}

Note that we have $\mathcal{P}=\mathcal{P}_{1}\cup \mathcal{P}_{2}$. 
Let $\G=\{x\in [n]: \mathcal{P}(x)=\emptyset \}$ and $K={\textstyle \bigcup_{P\in \mathcal{P}_{2} }}P$ Then we have the following partition of $[n]$:
\begin{equation}\label{equ:3parts}
[n]=\G\cup K\cup (\bigcup_{P\in \mathcal{P}_{1}}P).
\end{equation}

As Lemma \ref{pile_lemma} indicates, every vertex in $\G$ is good and by Lemma \ref{weight_lemma} (2), it has weight larger than $\wb$. 
So we have 
\begin{equation}\label{equ:J}
\sum_{x\in \G}\omega (x)> \sum_{x\in \G}\wb =\wb |\G|.
\end{equation}

Next, we prove that the average weight of vertices in $K$ is more than $\wb$. 
We distinguish between $K_{1}=\{x\in K:|\mathcal{P}(x)|=1\}$ and $K_{2}=K\setminus K_{1}$. 
By Lemma \ref{weight_lemma}, 
every vertex $z\in K_1$ satisfies $\ww(z)> \wb-\frac{1}{18}$. 
So it is sufficient to prove that $\sum_{x\in K_2}(\ww(x)-\wb)\ge \sum_{z\in K_1}\frac{1}{18}$.  
We have the following claim. 
For a pile $P$, let $\theta_{P}=|\{z\in P:|\mathcal{P}(z)|=1\}|$.

\begin{claim}\label{Claim:Px>1}
Every vertex $x\in K_2$ satisfies $\ww(x)> \wb +\frac{1}{18} \sum_{P\in \mathcal{P}(x)}\frac{\theta_{P}}{d-\theta_{P}}$. 
\end{claim}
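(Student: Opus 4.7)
The plan is to apply Lemma~\ref{weight_lemma} to the vertex $x$. Since $x\in K_2\subseteq K$ lies in at least two distinct piles, each of size $d$, we have $|N(x)|\geq d+1$, so $x$ is good and Lemma~\ref{weight_lemma}(2) gives
\[
\ww(x)\ >\ \wb-\frac{1}{18}+\frac{|N(x)|-d}{6}.
\]
Rearranging, the claim reduces to the purely combinatorial inequality
\[
|N(x)|-d\ \geq\ \frac{1}{3}\Bigl(1+\sum_{P\in\mathcal{P}(x)}\frac{\theta_P}{d-\theta_P}\Bigr).
\]

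The key structural input is that for each pile $P\in\mathcal{P}(x)$, the set $A_P\subseteq P$ of $\theta_P$ vertices lying in only one pile is disjoint from every other pile, while $P\subseteq N(x)$ by the definition of a pile. Enumerate $\mathcal{P}(x)=\{P_1,\dots,P_k\}$ with $k\geq 2$, write $\theta_i:=\theta_{P_i}$, and set $s:=\sum_i\theta_i$ and $\underline{t}:=\min_i\theta_i$. Anchoring the union at a pile $P_{i^*}$ attaining $\underline{t}$ and appending the pairwise disjoint sets $A_{P_i}$ for $i\neq i^*$ yields $|N(x)|\geq|\bigcup_i P_i|\geq d+s-\underline{t}$. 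Combining this with the trivial estimate $|N(x)|\geq d+1$ (coming from any two distinct piles containing $x$), we obtain
\[
|N(x)|-d\ \geq\ \max\bigl(1,\,s-\underline{t}\bigr).
\]

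For the final bookkeeping step, note that since $x$ lies in at least two piles, $x\notin A_{P_i}$ for each $i$, hence $\theta_i\leq d-1$. This yields the crude bound $\theta_i/(d-\theta_i)\leq\theta_i$, which reduces the target inequality to $\max(1,s-\underline{t})\geq(1+s)/3$. A short case split finishes the argument: if $s\leq 2$ the right-hand side is at most $1$ and the trivial bound suffices; if $s\geq 3$, then $s\geq k\underline{t}\geq 2\underline{t}$ gives $2s-3\underline{t}\geq s/2\geq 3/2>1$, which is exactly what we need.

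I expect the only genuine (and mild) obstacle to be choosing the correct lower bound on $|N(x)|$: one must anchor the union at the pile with smallest $\theta_P$ in order to preserve the largest possible contribution from the unique-vertex sets $A_P$, and it is essential to supplement this with the trivial $|N(x)|\geq d+1$ estimate to handle the degenerate case in which all $\theta_P$ vanish. Once both ingredients are in hand, the remainder of the argument is routine arithmetic.
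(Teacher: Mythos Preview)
Your proof is correct and follows essentially the same approach as the paper: both apply Lemma~\ref{weight_lemma}(2), both establish the lower bound $|N(x)|\ge d+\sum_P\theta_P-\min_P\theta_P$ by anchoring at the pile with smallest $\theta_P$ and appending the pairwise disjoint unique-vertex sets, and both finish with elementary arithmetic using $\theta_P/(d-\theta_P)\le\theta_P$. The only minor difference is bookkeeping: the paper invokes the fact $\theta_P\ge1$ (each pile contains a bad vertex, which lies in a unique pile by Lemma~\ref{pile_lemma}) to guarantee $s\ge2$ and streamline the inequality, whereas you avoid this by supplementing with the trivial bound $|N(x)|\ge d+1$ and a short case split on $s$.
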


\begin{proof}[Proof of Claim~\ref{Claim:Px>1}.]
Assume that $x\in [n]$ with $|\mathcal{P}(x)|\ge 2$. Then we have $|N(x)|\ge \sum_{P\in \mathcal{P}(x)}\theta_{P}+\max_{Q\in \mathcal{P}(x)}(d-\theta_{Q})=\sum_{P\in \mathcal{P}(x)}\theta_{P}+d-\min_{Q\in \mathcal{P}(x)}\theta_{Q}$. 
Then by Lemma \ref{weight_lemma} (2), we have 
\begin{align}
\omega(x)> \wb -\frac{1}{18} +\frac{1}{6}(|N(x)|-d) \ge \wb -\frac{1}{18} +\frac{1}{6}(\sum_{P\in \mathcal{P}(x)}\theta_{P}-\min_{Q\in \mathcal{P}(x)}\theta_{Q}) \ge \wb -\frac{1}{18} +\frac{1}{12}\sum_{P\in \mathcal{P}(x)}\theta_{P}. \notag 
\end{align}
Note that $\theta_{P}\geqslant 1$ for any $P\in \mathcal{P}$.
So we have $\sum_{P\in \mathcal{P}(x)}\theta_{P}\geqslant 2$. 
Since $-\frac{1}{18}+\frac{1}{12}\lambda \geqslant \frac{1}{18} \lambda$ for any $\lambda\ge 2$, 
we can derive that 
\[
\omega(x)
\ge \wb -\frac{1}{18} +\frac{1}{12}\sum_{P\in \mathcal{P}(x)}\theta_{P} \geqslant \wb +\frac{1}{18} \sum_{P\in \mathcal{P}(x)}\theta_{P} \ge \wb +\frac{1}{18} \sum_{P\in \mathcal{P}(x)}\frac{\theta_{P}}{d-\theta_{P}},
\]
finishing the proof of this claim.
\end{proof}

Using Claim \ref{Claim:Px>1}, it is straightforward to obtain the following inequality  
\begin{align}
\sum_{x\in K_{2}}\omega (x)&> \sum_{x\in K_{2}} \left(\wb +\frac{1}{18} \sum_{P\in \mathcal{P}(x)}\frac{\theta_{P}}{d-\theta_{P}}\right) =\wb |K_{2}|+\frac{1}{18} \sum_{x\in K_{2}}  \sum_{P\in \mathcal{P}(x)}\frac{\theta_{P}}{d-\theta_{P}} \notag \\
&=\wb |K_{2}|+\frac{1}{18} \sum_{P\in \mathcal{P}_{2}}  \sum_{x\in P\cap K_{2}}\frac{\theta_{P}}{d-\theta_{P}} =\wb |K_{2}|+\frac{1}{18} \sum_{P\in \mathcal{P}_{2}}  \theta_{P}. \notag 
\end{align}
Moreover, we have 
$\sum_{x\in K_{1}}\omega (x)> \sum_{x\in K_{1}}\left(\wb -\frac{1}{18}\right)=\wb |K_{1}|-\frac{1}{18} \sum_{P\in \mathcal{P}_{2}}  \theta_{P}.$
Adding the above two inequalities up, we can derive that
\begin{equation}\label{equ:K}
\sum_{x\in K}\omega (x)=\sum_{x\in K_{1}}\omega (x)+\sum_{x\in K_{2}}\omega (x)>\wb (|K_{1}|+|K_{2}|)= \wb |K|.
\end{equation}
Finally, putting \eqref{equ:J}, \eqref{equ:K} and Theorem \ref{auxiliary_theorem} all together,  
we can obtain that 
\begin{align}
\sum_{x\in [n]}\ww(x)&=\sum_{x\in \G}\ww(x)+\sum_{x\in K}\ww(x)+\sum_{P\in\mathcal{P}_{1}}\sum_{x\in P}\ww(x) \ge \wb(|\G|+|K|+d|\mathcal{P}_{1}|) =\wb n, \notag
\end{align}
finishing the proof of Theorem \ref{combined_theorem}.
\end{proof}

\section{Proof of Theorem \ref{auxiliary_theorem}: average weight of isolated piles}\label{sec5}

Throughout this section, we let $n,d,c\in\mathbb{N}$ with $d\ge 50$ and $c\in [d]$, $\g \subseteq 2^{[n]}$ be a hereditary family with $\md(\g)\geq 2^{d-1}-c+1$, and $P\subseteq [n]$ be an isolated pile in $\g$. 
We will assume without loss of generality that $P=[d]$.
Our goal is to demonstrate that $\sum_{x\in [d]}\ww_\g(x)\geq \wb d$.
Before proceeding with the proof, it is necessary to establish several definitions.

Recall that we assume $P=[d]$. 
Define $\f=\g|_{[d]}=\{\set\subseteq[d]\mid\set\in\g\}$ be the projection of the family $\g$ onto the pile $P$.
Then $\f$ is also a hereditary family.
For a set $\set\in \g$, we say it is {\it internal} if $\set\in\f$, and {\it external} if $\set\in\g\setminus\f$.
Let $x\in [d]$. We define $\win(x)=\sum_{x\in \set\in \f}\frac{1}{|\set|}$ be the internal weight of $x$, $\wout(x)=\sum_{x\in \set\in \g\setminus\f}\frac{1}{|\set|}$ be the external weight of $x$.
Clearly for any $x\in [d]$, $\ww(x)=\win(x)+\wout(x)$.
For any $x\in [d]$, we denote the number of sets in $\f$ containing $x$ by $d_\f(x)$, 
while the number of sets in $\g$ containing $x$ is denoted by $d_\g(x)$. 
Let $$\md=2^{d-1}-c+1$$ be a constant. 
So for any $x\in [d]$, we have $d_\g(x)\geq \md(\g)\geq \md$.

We assume $|\f|=2^d-t$ and let $\mf=2^{[d]}\setminus\f=\{\m_1,\m_2,\dots,\m_t\}$ be the family consisting of all missing sets of $\f$. 
Then $\f=2^{[d]}\setminus\mf=2^{[d]}\setminus\{\m_1,\m_2,\dots,\m_t\}$.
To enhance the readability of the proof, we also define $\n_i=[d]\setminus \m_i$ and let $\nf=\{\n_1,\n_2,\dots,\n_t\}$.
By definition, it follows that $\nf$ is a hereditary family if and only if $\f$ is a hereditary family. 
So both families $\f$ and $\nf$ are hereditary families. Lastly, we note that for any $x\in [d]$, 
\begin{equation}\label{equ:d_N}
    d_\f(x)=d_{2^{[d]}}(x)-d_\mf(x)=2^{d-1}-d_\mf(x)=2^{d-1}-t+d_\nf(x).
\end{equation}

The following lemma collects essential technical information required for proving Theorem \ref{auxiliary_theorem}.

\begin{lemma}\label{few_good_vertex_theorem}
Let $n,d,c\in\mathbb{N}$ with $d\ge 50$ and $c\in [d]$, $\g \subseteq 2^{[n]}$ be a hereditary family with $\md(\g)\geq 2^{d-1}-c+1$, and $P\subseteq [n]$ be an isolated pile in $\g$. Write $P=[d]$. If $$\sum_{x\in [d]}\ww_\g(x)< 2^d-c,$$
then the following four statements hold:
\begin{itemize}
    \item there are at most $7$ good vertices in $[d]$,
    \item $t\leq d+4$,
    \item for any $i\in [t]$, $|\n_i|\leq 3$, and
    \item for every bad vertex $x$ in $[d]$, $\{x\}\in \nf$.
\end{itemize}
\end{lemma}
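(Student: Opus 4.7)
My plan is to deduce all four conclusions from the hypothesis $\sum_{x \in [d]} \ww_\g(x) < 2^d - c$ by combining three ingredients: the identity $\sum_{x \in [d]} \win(x) = |\f| - 1 = 2^d - t - 1$ (from double-counting each nonempty $F \in \f$, each contributing $|F|/|F| = 1$); the pointwise weight bound of Lemma \ref{weight_lemma}; and elementary properties of the complementary family $\nf$. Combining the identity with the hypothesis yields $\sum_x \wout(x) < t + 1 - c$, and since $\wout(x) \geq 0$ for all $x$, I obtain the key lower bound $t \geq c$.

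The fourth conclusion follows immediately. For any bad vertex $x$, $N(x) = [d]$ forces $d_\g(x) = d_\f(x) \geq \md$, so $d_\mf(x) = 2^{d-1} - d_\f(x) \leq c - 1$. Using the complementation identity $d_\nf(x) = t - d_\mf(x) \geq t - c + 1 \geq 1$, some member of $\nf$ contains $x$, and heredity of $\nf$ upgrades this to $\{x\} \in \nf$.

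For the first conclusion, Lemma \ref{weight_lemma} gives $\ww(x) > \wb - \gamma_d + (|N(x)| - d)/6$ with $\gamma_d := \wb - W(\md) < 1/18$ for $d \geq 50$. Summing over $x \in [d]$ and using $\sum \ww < 2^d - c \leq d\wb + 1/2$ produces the aggregate slack inequality $\sum_{x \in [d]} (|N(x)| - d) < 6 d \gamma_d + 3$. Independently, each good vertex $x$ satisfies $\wout(x) \geq (|N(x)| - d)/2$ via the pair sets $\{x, y\}$ with $y \in N(x) \setminus [d]$, so $\sum_{\text{good } x} (|N(x)| - d) \leq 2(t + 1 - c)$. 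Feeding the bound $t \leq d + 4$ from the second conclusion into this yields $g \leq 7$. For the third conclusion, suppose some $N_i \in \nf$ has $|N_i| \geq 4$; heredity forces $2^{N_i} \subseteq \nf$, so $\mf$ contains the $16$ sets $[d] \setminus S$ with $S \subseteq N_i$, and each $x \in [d] \setminus N_i$ therefore has $d_\mf(x) \geq 16$. For $c \leq 16$ this violates $d_\mf(x) \leq c - 1$ at any bad $x$, forcing every vertex in $[d] \setminus N_i$ to be good and giving $g \geq d - |N_i|$; this contradicts the first conclusion when $|N_i| \leq d - 8$, while for $|N_i|$ close to $d$ the heredity bound $t \geq 2^{|N_i|}$ contradicts the second conclusion. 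For $c \geq 17$, a refined argument using $\win(x) \geq W(d_\f(x))$ with the sharper bound $d_\f(x) \leq 2^{d-1} - 16$ for $x \in [d] \setminus N_i$ together with the identity $\sum \win = 2^d - t - 1$ produces the contradiction. The second conclusion then follows: $\nf$ consists of sets of size at most $3$ and contains $\emptyset$ together with all singletons of bad vertices, so the weight identity combined with pointwise lower bounds on $\win(x)$ forces the number of $\nf$-elements of size at least $2$ to be bounded by a constant, yielding $t \leq d + 4$.

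The main obstacle is extracting the exact constant $7$ in the first conclusion: the raw weight inequality alone yields only $g = O(d \gamma_d) \leq 16$ for $d = 50$, so the pair-set lower bound on $\wout$ is essential and it delivers the value $7$ only in conjunction with the bound $t \leq d + 4$. Because Claims 1, 2, 3 are mutually dependent (Claim 1 uses Claim 2, Claim 2 uses Claim 3, Claim 3 uses Claim 1), the arguments must be carefully interleaved --- most likely by first proving weaker forms of each simultaneously and then bootstrapping to the stated constants. The boundary case $c = d$, where the slack $\sum \ww - d\wb$ carries an extra $1/2$, requires separate attention throughout.
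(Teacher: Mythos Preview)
Your proposal has a genuine gap in the derivation of the first conclusion ($g\le 7$), and this gap propagates through the circular dependence you correctly identify.

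Your route to $g\le 7$ is the chain
\[
\wout(x)\ \ge\ \tfrac12\bigl(|N(x)|-d\bigr)\ \Longrightarrow\ g\ \le\ \sum_{\text{good }x}\bigl(|N(x)|-d\bigr)\ <\ 2(t+1-c),
\]
and then you feed in $t\le d+4$. But this yields only $g<2(d+5-c)$, which for $c=1$ is about $2d$ and even for $c=d$ gives $g\le 9$, never $g\le 7$. The pair-set lower bound on $\wout$ is simply too crude: it ignores the bulk of the external weight that a good vertex with small $d_\f(x)$ must carry. The paper's replacement is a quantitative external-weight lemma,
\[
\wout(x)\ \ge\ \frac{\delta-d_\f(x)}{3+\log c},
\]
whose point is that if $x$ loses many internal sets (large $d_\mf(x)$), the degree condition forces commensurately many external sets through $x$, not merely one pair per outside neighbour.

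Second, your bootstrapping scheme is not actually executed, and the sketched interdependence (Claim~1 $\Rightarrow$ Claim~3 $\Rightarrow$ Claim~2 $\Rightarrow$ Claim~1) cannot close with the tools you list. The paper breaks the circularity differently: it first proves a \emph{parametric} size bound $|\n_i|\le f_u$, valid for whatever the current number $u$ of good vertices is (using only the easy bound $u\le d/2-1$, which your aggregate slack inequality also gives). Here $f_u$ is the integer with $2^{f_u}-f_u\le u<2^{f_u+1}-f_u-1$. This bound comes from the observation that a large $\n_i$ together with the singletons $\{x\}\in\nf$ of bad vertices already forces $d_\mf(y)\ge d$ for every $y$ outside a small set, turning too many vertices good. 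Combining $|\n_i|\le f_u$ with the external-weight lemma and the identity $\sum_x(\delta-d_\f(x))\ge(d-f_u)t+d-cd$ produces a self-referential inequality in $u$,
\[
0\ \ge\ (d-f_u-\log c-3)\ -\ \frac{2f_u}{u}(c-1),
\]
which for $d\ge 50$ forces $u\le 7$. Only \emph{after} $u\le 7$ is established do the sharp bounds $|\n_i|\le f_7=3$ and $t\le d+4$ follow (the latter again via the external-weight lemma, not via ``pointwise lower bounds on $\win(x)$'' as you suggest). Your argument for $|\n_i|\le 3$ in the case $c\ge 17$ is also left as a vague ``refined argument''; since $\sum_x\win(x)=2^d-t-1$ is an exact identity, pointwise lower bounds on $\win(x)$ give no new information by themselves.
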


We will postpone the proof of this lemma in Subsection~\ref{subsec:iso-pile}.
Now we prove Theorem \ref{auxiliary_theorem}.

\begin{proof}[\bf{Proof of Theorem \ref{auxiliary_theorem} (assuming Lemma \ref{few_good_vertex_theorem})}]
Suppose for a contradiction that
there exists an isolated pile $P=[d]$ of $\mathcal{F}$
with $\sum_{x\in [d]}\ww_\g(x)< \wb d$. 
Then we have 
\[
\sum_{x\in [d]}\ww_\g(x)< \wb d\leq 2^d-c.
\]
By Lemma~\ref{few_good_vertex_theorem}, 
there are at most $7$ good vertices in $[d]$, $t\leq d+4$, $|\n_i|\leq 3$ for any $i\in [t]$, and for every bad vertex $x$ in $[d]$, $\{x\}\in \nf$. 
We may assume that all vertices in $[d-7]$ are bad vertices. 
Therefore we have $\{\emptyset\}\cup\{\{x\}\mid x\in [d-7]\}\subseteq \nf$.
Then the family $\nf$ has at most $|\nf|-(d-7)-1=t-d+6\leq 10$ sets of size greater than $1$. 
As $|\n_i|\leq 3$ for any $i\in [t]$, those sets can cover at most $30$ vertices. 
Thus there are at least $d-7-30\geq 13$ bad vertices that are contained in exactly one set in $\nf$, which is the singleton set formed by itself. 
Assume $x\in [d-7]$ is one of these bad vertices.
Then $2^{d-1}-c+1=\md\leq d_\g(x)=2^{d-1}-(t-1)$. Therefore $t\leq c$.

First consider when $1\leq c\leq d-1$.
We have $\emptyset\in \nf$ and for every bad vertex $x$, $\{x\}\in \nf$.
Since $|\nf|=t\leq c\leq d-1$, there are at most $d-2$ bad vertices and thus at least $2$ good vertices in $[d]$.
For a good vertex $x\in [d]$, it has at least one neighbor $y\notin [d]$, so $\{x,y\}\in \g\setminus\f$, implying that 
\begin{equation}\label{equ:wout>1/2}
\wout(x)=\sum_{x\in \set\in \g\setminus\f}\frac{1}{|\set|}\geq \frac12.
\end{equation}
It is easy to see $\sum_{x\in [d]}\win(x)=|\f\setminus\{\emptyset\}|=2^d-t-1$.
Thus, we have $$\sum_{x\in [d]}\ww(x)=\sum_{x\in [d]}\win(x)+\sum_{x\in [d]}\wout(x)\geq (2^d-t-1)+2\cdot \frac{1}{2}\geq2^d-c=\wb d,$$
a contradiction to our assumption.

Now we consider when $c=d$. 
Again we have $\emptyset\in \nf$ and for every bad vertex $x$, $\{x\}\in \nf$.
Since $|\nf|=t\leq c=d$, there are at most $d-1$ bad vertices and at least $1$ good vertex in $[d]$. Thus $$\sum_{x\in [d]}\ww(x)=\sum_{x\in [d]}\win(x)+\sum_{x\in [d]}\wout(x)\geq|\f\setminus\{\emptyset\}|+\frac{1}{2}\geq (2^d-t-1)+\frac{1}{2}\geq2^d-d-\frac{1}{2}=\wb d,$$
a contradiction to our assumption.
Thus we finish the proof of Theorem \ref{auxiliary_theorem}.
\end{proof}

We add a remark that in the case of $c=d$, the equality of Theorem \ref{combined_theorem} holds if and only if the hereditary family $\mathcal{F}$ is isomorphic to $\mathcal{F}_{0}(n,d)$ given in Construction \ref{construction_c=d}. 
To see this, suppose that we have the equation $\sum_{x\in [n]}\ww_\g(x)=\wb n$ in the proof of Theorem \ref{combined_theorem}.
Note that both inequalities \eqref{equ:J} and \eqref{equ:K} are strict.
So this forces that there is no vertices in $J\cup K$, i.e.,
the vertex set of $\mathcal{F}$ consists of isolated piles $P$; 
moreover, for each isolated pile $P$ we have $\sum_{x\in P}\ww_\g(x)=\wb d$ in Theorem~\ref{auxiliary_theorem}. 
In the above proof of Theorem~\ref{auxiliary_theorem}, we see that this equality holds if and only if there are $d-1$ bad vertices and one good vertex with exactly one external edge of size $2$,
where $\nf$ consists of the empty set and $d-1$ singleton sets formed by those $d-1$ bad vertices.
By combining all the aforementioned information, it becomes clear that $\mathcal{F}$ is isomorphic to $\mathcal{F}_{0}(n,d)$ in Construction \ref{construction_c=d}.

On the other hand, in the case $1\leq c\leq d-1$, there are situations where multiple hereditary families exist for which the equality in Theorem \ref{combined_theorem} holds.

\subsection{Proof of Lemma~\ref{few_good_vertex_theorem}}\label{subsec:iso-pile} 
Let $P=[d]$ be an isolated pile in a hereditary family $\g\subseteq 2^{[n]}$ with $\delta(\mathcal{F})\geq 2^{d-1}-c+1$, where $d\geq 50$ and $c\in [d]$. 
Under the assumption that $\sum_{x\in [d]}\ww_\g(x)< 2^d-c$, we want to verify the four conclusions of Lemma~\ref{few_good_vertex_theorem}.
First we establish several simple facts.

\begin{lemma}\label{facts}
Let $P=[d]$ be an isolated pile. Then the following hold:

    (a) $t\geq c$.

    (b) $t\leq 2c-2$.

    (c) For any $ x\in [d]$, $d_\f(x)\geq 2^{d-1}-2c+2$.

    (d) If $x$ is a bad vertex, then $\{x\}\in \nf$.

    (e) There are at most $\frac{d}{2}-1$ good vertices in $[d]$.
\end{lemma}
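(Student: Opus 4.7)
The plan hinges on the decomposition $\ww_\g(x)=\win(x)+\wout(x)$ together with the identity
\[
\sum_{x\in[d]}\win(x)=\sum_{\emptyset\ne F\in\f}\sum_{x\in F}\frac{1}{|F|}=|\f|-1=2^{d}-t-1.
\]
Combined with the standing hypothesis $\sum_{x\in[d]}\ww_\g(x)<2^{d}-c$, this instantly proves (a) and furnishes the external-weight budget $\sum_{x\in[d]}\wout(x)<t-c+1$ that will drive (b) and (c).

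For (d), I would argue by contradiction: if $x$ is bad then $N(x)=[d]$ forces $d_\g(x)=d_\f(x)\geq\md$, so $d_\mf(x)\leq c-1$. If $[d]\setminus\{x\}$ were in $\f$ then heredity would place every subset of $[d]\setminus\{x\}$ in $\f$, so every $\m\in\mf$ would contain $x$, giving $d_\mf(x)=t\geq c$ by (a), a contradiction. For (e), Lemma~\ref{weight_lemma} says each bad vertex contributes at least $\wb-\tfrac{1}{18}$ and each good vertex at least $\wb+\tfrac{1}{9}$ to $\sum_x\ww_\g(x)$. Writing $g$ for the number of good vertices and $b=d-g$ for the number of bad ones, the hypothesis yields
\[
d\wb+\frac{2g-b}{18}<\sum_{x\in[d]}\ww_\g(x)<2^{d}-c.
\]
Using $d\wb=2^{d}-c$ when $1\leq c\leq d-1$ and $d\wb=2^{d}-d-\tfrac{1}{2}$ when $c=d$, rearranging gives $g<d/3$ in the first range and $g<(d+9)/3$ in the second, both comfortably forcing $g\leq d/2-1$ once $d\geq 50$.

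The harder items are (c) and (b). For (c), the bad case is immediate from $d_\g(x)=d_\f(x)\geq\md$, so the work lies at a good $x$, where the target is equivalent to bounding the external degree $d_\g(x)-d_\f(x)$ by $c-1$. The plan is to combine three ingredients: the identity $\wout(x)=\sum_{H\in\g(x)\setminus\f(x)}\tfrac{1}{|H|+1}$, which lower-bounds $\wout(x)$ in terms of the external degree once $|N(x)|$ is controlled; an upper bound on $|N(x)|$ obtained from the ``moreover'' clause of Lemma~\ref{lem2} together with the per-vertex weight ceiling $\ww_\g(x)<\wb+\tfrac{d-1}{18}+O(1)$ (derived from the total-weight hypothesis after subtracting Lemma~\ref{weight_lemma}'s lower bounds for the other $d-1$ vertices, which forces $|N(x)|=O(d)$); and finally the global budget $\sum_x\wout(x)<t-c+1$.

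Once (c) is in hand, (b) follows by double counting. Because $[d]$ is a pile, every pair inside $[d]$ lies in $\f$, so each $\m\in\mf$ satisfies $|\m|\geq 3$; more precisely $|\m|=d-|\n|$ with $\nf$ hereditary of size $t$. Applying Theorem~\ref{katona} to $\nf$ with the monotone non-increasing function $f(k)=-k$ yields $\sum_{\n\in\nf}|\n|\leq\sum_{R\in\mathcal{R}(t)}|R|=O(t\log t)$, whence $\sum_{\m\in\mf}|\m|\geq dt-O(t\log t)$. Pairing this with the double count $\sum_{\m\in\mf}|\m|=\sum_{x\in[d]}d_\mf(x)\leq d(2c-2)$ from (c) collapses, for $d\geq 50$, to $t\leq 2c-2$. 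The principal obstacle I expect lies in (c): converting the aggregate external-weight budget into a uniform per-vertex bound on external degree is delicate, because $|N(x)|$ admits no a priori upper bound and one must carefully marry the ``moreover'' clause of Lemma~\ref{lem2} with the total-weight hypothesis to keep the per-vertex accounting sharp enough to yield exactly $c-1$ rather than a constant multiple of it.
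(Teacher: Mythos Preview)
Your treatment of (a), (d), and (e) is essentially the paper's. The real divergence is in (b) and (c), where you invert the logical order and overlook a one-line argument.

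The paper proves (b) first, directly from the definition of a pile. A pile $P=[d]$ must contain some vertex $z$ with $N(z)=P$; this $z$ is bad, so $d_\f(z)=d_\g(z)\geq\md=2^{d-1}-c+1$. Since $\f$ is hereditary, the map $S\mapsto S\setminus\{z\}$ injects $\{S\in\f:z\in S\}$ into $\{S\in\f:z\notin S\}$, whence $|\f|\geq 2d_\f(z)\geq 2^d-2c+2$, i.e.\ $t\leq 2c-2$. Then (c) is immediate: $d_\f(x)\geq 2^{d-1}-|\mf|=2^{d-1}-t\geq 2^{d-1}-2c+2$ for \emph{every} $x\in[d]$, with no case split between good and bad vertices and no external-weight analysis at all.

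Your route for (c) has two genuine problems. First, the claim that for good $x$ the statement is ``equivalent'' to the external-degree bound $d_\g(x)-d_\f(x)\leq c-1$ is too strong: that bound would suffice for (c), but (c) itself follows instantly from $d_\f(x)\geq 2^{d-1}-t$ once $t\leq 2c-2$, and the per-vertex external-degree bound $c-1$ is neither what is claimed nor what is needed. Second, the weight-budget argument is circular: the budget $\sum_{x}\wout(x)<t-c+1$ involves $t$, which you have not yet bounded, and in any case it is an aggregate constraint, not the per-vertex bound you need; the machinery you invoke (the ``moreover'' clause of Lemma~\ref{lem2}, a ceiling on $|N(x)|$, etc.) does not visibly collapse to the exact constant $c-1$. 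Likewise, your (b)-from-(c) step via Theorem~\ref{katona} and double counting yields only $t(d-O(\log t))\leq d(2c-2)$, which gives $t\leq 2c-2$ up to a multiplicative $(1+o(1))$, not the exact inequality.

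The missing idea is simply to exploit the bad vertex guaranteed by the pile definition; once you use it, both (b) and (c) fall out in one line each.
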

\begin{proof}

(a) Suppose for a contradiction that $t\leq c-1$. 
By counting the weights in $[d]$, we get $$\sum_{x\in[d]}\ww(x)\geq\sum_{x\in[d]}\win(x)=|\f\setminus \{\emptyset\}|=2^d-t-1\geq 2^d-c$$ which is a contradiction. So we have $t\geq c$.

(b) By the definition of piles, there exists at least one bad vertex say $x$ in $[d]$. 
Then $|\{\set \in \f\mid x \in \set\}|=d_\f(x)=d_\g(x)\geq \md= 2^{d-1}-c+1$.
Since $\f$ is a hereditary family, it holds that 
\[
|\{\set \in \f\mid x \not\in \set\}|\geq|\{\set \in \f\mid x \in \set\}|\geq 2^{d-1}-c+1.
\]
Consequently, $2^d-t=|\f|=|\{\set \in \f\mid x \not\in \set\}|+|\{\set \in \f\mid x \in \set\}|\geq 2^{d}-2c+2$. Thus $t\leq 2c-2$.

(c) Recall that $\f=2^{[d]}\setminus \mf$. 
Using \eqref{equ:d_N} and the above (b), we have $d_{\f}(x)\geq d_{2^{[d]}}(x)-|\mf|=2^{d-1}-t\geq 2^{d-1}-2c+2.$

(d) If $x$ is contained by every set in $\mf$, then $d_\g(x)=d_\f(x)=d_{2^{[d]}}(x)-|\mf|=2^{d-1}-t\leq 2^{d-1}-c<\md$, a contradiction. 
So there exists one set in $\mf$ not containing $x$. 
That is, there exists one set in $\nf$ containing $x$. 
Since $\nf$ is a hereditary family, we have $\{x\}\in \nf$.

(e) Suppose not. Then using Lemma \ref{weight_lemma} and $d\geq 50$, $\sum_{x\in [d]}\ww(x)> (\wb-\frac{1}{18})\cdot d +\frac{1}{6}\cdot (\frac{d}{2}-1)> d\wb+\frac{1}{2}\geq 2^d-c$, which is a contradiction.
\end{proof}

Before proceeding with the proof, we prove a lemma that allows us to control the external weight.

\begin{lemma}\label{externalweight}
For any vertex $x\in [d]$, it holds that $\wout(x)\geq \frac{\md-d_\f(x)}{3+\ln c}$.
\end{lemma}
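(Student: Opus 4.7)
The plan is to lower-bound $\wout(x)$ by partitioning the external sets at $x$ according to a canonical ``leader'' external vertex, then invoking Katona's theorem (Theorem~\ref{katona}) on each part. If $\md - d_\f(x) \leq 0$ the claim is vacuous, so I set $k := \md - d_\f(x) \geq 1$. Since $d_\g(x) \geq \md > d_\f(x)$, some set of $\g$ containing $x$ has an element outside $[d]$; by heredity of $\g$ this forces the external neighborhood $Y := N(x) \setminus [d]$ to be nonempty, and every external set $F \in \g\setminus\f$ with $x \in F$ must meet $Y$.

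I would fix an arbitrary linear order on $Y$ and, for each such $F$, assign to it the leader $y(F) := \min(F \cap Y)$. This partitions $\{F \in \g\setminus\f : x \in F\}$ into classes $A_y := \{F : y(F) = y\}$ whose sizes $\tilde n_y := |A_y|$ satisfy $\sum_{y \in Y} \tilde n_y = d_\g(x) - d_\f(x) \geq k$. The key structural step I will verify is that $B_y := \{F \setminus \{x,y\} : F \in A_y\}$ is hereditary: for $B' \subseteq B \in B_y$, heredity of $\g$ places $B' \cup \{x,y\}$ in $\g$, this set still contains $\{x,y\}$, and it still avoids every $y' \in Y$ smaller than $y$, so $B' \cup \{x,y\} \in A_y$ and $B' \in B_y$. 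This hereditariness check is precisely where the $\min$-rule is needed, and it is the main (essentially only) nontrivial point of the argument.

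With $B_y$ hereditary of size $\tilde n_y$, Katona's theorem applied to the non-increasing function $f(j) = 1/(j+2)$ yields
\[
\sum_{F \in A_y} \frac{1}{|F|} \;=\; \sum_{B \in B_y} \frac{1}{|B|+2} \;\geq\; \sum_{R \in \mathcal{R}(\tilde n_y)} \frac{1}{|R|+2} \;\geq\; \frac{\tilde n_y}{\log \tilde n_y + 2},
\]
where the last step uses the elementary fact that any hereditary family of size $m$ has all sets of size at most $\log m$ (a set of size $s$ forces $2^s$ subsets into the family, so $2^s \leq m$). By Lemma~\ref{facts}(c) we have $d_\f(x) \geq 2^{d-1} - 2c + 2$, hence $k \leq c - 1$ and $\tilde n_y \leq k < c$ for every $y$, so $\log \tilde n_y + 2 \leq 3 + \ln c$ uniformly in $y$. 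Summing over $y \in Y$ then collapses the bound to
\[
\wout(x) \;\geq\; \sum_{y \in Y} \frac{\tilde n_y}{3 + \ln c} \;=\; \frac{\sum_{y \in Y} \tilde n_y}{3 + \ln c} \;\geq\; \frac{k}{3 + \ln c} \;=\; \frac{\md - d_\f(x)}{3 + \ln c},
\]
which is the desired bound. The rest of the argument is routine Katona paired with the max-size-in-hereditary-family bound.
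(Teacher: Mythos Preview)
Your argument has a gap at the step ``$\tilde n_y \leq k$ for every $y$''. You only established $\sum_y \tilde n_y = d_\g(x) - d_\f(x) \geq k$; this does not bound the individual $\tilde n_y$, and indeed a single class $A_y$ can be much larger than $k$ (nothing prevents $d_\g(x)$ from greatly exceeding $\md$, and all external sets at $x$ could share the same minimal $Y$-element). Consequently the denominator replacement $\log \tilde n_y + 2 \leq \log c + 3$ is unjustified as written.

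The repair is short. If some $\tilde n_y \geq 2c$, then since $m\mapsto m/(\log m+2)$ is increasing for $m\ge 1$, that single term already gives
\[
\frac{\tilde n_y}{\log \tilde n_y+2}\;\ge\;\frac{2c}{\log(2c)+2}\;=\;\frac{2c}{\log c+3}\;>\;\frac{k}{\log c+3},
\]
using $k\le c-1$ from Lemma~\ref{facts}(c). Otherwise every $\tilde n_y<2c$, so $\log\tilde n_y+2<\log c+3$ uniformly, and your summation over $y$ goes through verbatim. With this patch the proof is complete; the leader-partition and the hereditariness of $B_y$ are handled correctly.

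Your route is genuinely different from the paper's. The paper argues by a direct size dichotomy: if every external set at $x$ has size at most $\log c+3$, the crude bound (count)/(max size) already suffices; if instead some external $Q\ni x$ has $|Q|>\log c+3$, one fixes $y\in Q\setminus[d]$ and a $\lceil\log c\rceil$-subset $Q'\subseteq Q\setminus\{x,y\}$, and heredity yields the $2^{|Q'|}\ge c$ external sets $\{x,y\}\cup S$ for $S\subseteq Q'$, each of size at most $\lceil\log c\rceil+2$. Your partition-by-leader plus Katona is more structural and avoids singling out a special large set, at the cost of the hereditariness verification for $B_y$ and, after the fix, a small case split of its own.
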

\begin{proof}
If $\md\leq d_\f(x)$ this is trivial. So we may assume $\md> d_\f(x)$. 
This implies that $x$ is a good vertex and there are at least $\md-d_\f(x)$ many sets $\set$ satisfying that $x\in \set\in \g\setminus\f$.
If all these sets have size at most $\ln c+3$, then it follows that $$\wout(x)=\sum_{x\in \set\in \g\setminus\f}\frac{1}{|\set|}\geq \frac{d_\g(x)-d_\f(x)}{\ln c+3}\geq \frac{\md-d_\f(x)}{\ln c+3},$$
as desired.
Therefore we may assume there is a set $ Q$ such that $x\in  Q\in \g\setminus\f$, $| Q|> \ln c+3$ and there exists $y \in  Q \setminus [d]$. 
We can choose a set $Q^\prime$ with $ Q^\prime \subseteq  Q\setminus\{x,y\}$ and $| Q^\prime|=\lceil\ln c\rceil$. 
Define $\h=\{\{x,y\}\cup\set\mid\set\subseteq Q^\prime\}$. 
Since $\g$ is hereditary, every set in $\h$ is contained in $Q$ and contains the vertex $y$, 
implying that $\h\subseteq\g\setminus\f$.
Therefore, we can derive
$$\wout(x)\geq\sum_{\set\in \h}\frac{1}{|\set|}\geq \frac{2^{\lceil\ln c\rceil}}{\lceil\ln c\rceil+2}\geq \frac{(2^{d-1}-c+1)-(2^{d-1}-2c+2)}{\ln c+3}\geq \frac{\md-d_\f(x)}{\ln c+3},$$
finishing the proof.
\end{proof}

Next, we define a sequence of integers to assist in upper bounding the size of $N_i$.

\begin{definition}
For any positive integer $u$, let $f_u$ be the unique integer such that $$2^{f_u}-f_u\leq u < 2^{f_u+1}-(f_u+1).$$
Since $\max\{f_u, 2^{f_u}/2\}\leq 2^{f_u}-f_u$, 
it is easy to derive that $f_u\leq u$, $f_u\leq \log (2u)$ and $u \leq 2^{f_u+1}-f_u-2$.
\end{definition}
\begin{lemma}\label{size_lemma}
    Let $u$ be the number of good vertices in $[d]$. 
    Then for any $i\in [t]$, we have $|\n_i|\leq f_u$.
\end{lemma}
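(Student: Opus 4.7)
The plan is to argue by contradiction. Suppose $|N_i|=k$ satisfies $2^k-k>u$ (equivalently $k>f_u$). The strategy is to gather enough sets in $\mf$ containing some bad vertex to violate $d_\mf(y)\leq c-1$, or else to drive the external weight above the upper bound $\sum_x\wout(x)<t-c+1\leq c-1$ (the second inequality from $t\leq 2c-2$ in Lemma~\ref{facts}(b)). I exploit two sources of sets in $\mf$: (a) since $\nf$ is hereditary and contains $N_i$, every set $[d]\setminus S$ with $S\subseteq N_i$ lies in $\mf$, giving $d_\mf(y)\geq 2^k$ for $y\in M_i:=[d]\setminus N_i$ and $d_\mf(y)\geq 2^{k-1}$ for $y\in N_i$; (b) by Lemma~\ref{facts}(d), $[d]\setminus\{y\}\in\mf$ for each of the $d-u$ bad vertices. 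Writing $k_b$ for the number of bad vertices lying in $N_i$, these two sources overlap in exactly $k_b$ sets (the singleton-complements indexed by bad vertices of $N_i$).

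I split into two cases. If some bad $y_0\in M_i$, combining (a) and (b) yields $d_\mf(y_0)\geq (d-u-1)+2^k-k_b$; enforcing $d_\mf(y_0)\leq c-1$ together with $c\leq d$ and $k_b\leq k$ gives $2^k-k\leq u$, a contradiction. Otherwise every bad vertex lies in $N_i$, so $M_i$ is entirely good, $d-k\leq u$, $k\geq d-u$, and $k_b=d-u$. The combined hypotheses $2^k>u+k$ and $k\geq d-u$ force $2^k>d\geq c$, so Lemma~\ref{externalweight} gives the non-trivial bound $\wout(y)\geq (2^k-c+1)/(\ln c+3)$ for every good $y\in M_i$. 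Summing over $M_i$ and comparing with $\sum_x\wout(x)<c-1$ produces the key inequality
\[
(d-k)(2^k-c+1)<(c-1)(\ln c+3).
\]

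The main obstacle is ruling out this inequality under $k\geq d-u$, $2^k>u+k$, $c\leq d$, and $d\geq 50$. My plan is a two-regime check. When $d-k$ is at least logarithmic in $d$, already $2^k\geq 2d$, so the LHS exceeds $(d-k)(2^k-d+1)$, which is easily larger than the RHS for $d\geq 50$. When $d-k$ is small, $k$ is close to $d$ (note $k=d$ is excluded because $\emptyset\in\f$ by heredity forces $[d]\notin\nf$), so $2^k$ is exponentially large in $d$ and dominates the polynomial-in-$d$ right-hand side. The borderline subcase $u=d$ (no bad vertex) is a direct application of the same weight inequality. The threshold $d\geq 50$ provides the numerical slack needed in these estimates to absorb the $\ln c+3$ factor.
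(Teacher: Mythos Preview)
Your Case~1 (some bad vertex $y_0\in M_i$) is correct and is essentially the same degree count the paper uses. The gap is in Case~2. Your stated constraints ``$k\ge d-u$, $2^k>u+k$, $c\le d$, $d\ge 50$'' do \emph{not} by themselves rule out the inequality $(d-k)(2^k-c+1)<(c-1)(\ln c+3)$: for instance $d=c=63$, $k=6$, $u=57$ satisfies all four constraints yet gives $57\cdot 2=114<62(\log_2 63+3)\approx 556$. Your ``two-regime'' heuristic is also off: ``$d-k$ at least logarithmic in $d$'' does not force $2^k\ge 2d$, and even granting $2^k\ge 2d$ the product $(d-k)(2^k-d+1)$ is then only of order $d\log d$, the same as the right-hand side. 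The missing ingredient is Lemma~\ref{facts}\,(e): $u\le d/2-1$, which in Case~2 (where $d-k\le u$) forces $k\ge d/2+1$; then $2^k-c+1\ge 2^{d/2+1}-d+1$ is exponential and the check becomes trivial for $d\ge 50$. With that one line added, your argument goes through.

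The paper's route is shorter and avoids Case~2 (and hence the external-weight machinery and numerics) altogether. Instead of working with $N_i$ itself, it passes to a subset $N\subseteq N_i$ with $|N|=f_u+1$; then $|M|=d-f_u-1\ge u+1$ automatically (using $f_u\le u\le d/2-1$), so one can pick $Q\subseteq M$ of size $u+1$ containing all good vertices of $M$. The family $2^N\cup\{\{x\}:x\in M\setminus Q\}\subseteq\nf$ has size $\ge d$ and avoids every $y\in Q$, so $d_{\mf}(y)\ge d\ge c$ and every $y\in Q$ is forced to be good --- giving $u+1$ good vertices, a contradiction. Your approach trades this one reduction step for an extra case plus an appeal to Lemma~\ref{externalweight} and the global bound $\sum_x\wout(x)<c-1$; the paper's version is purely combinatorial and needs neither.
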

\begin{proof}
By Lemma \ref{facts} (e), we have $u\leq \frac{d}{2}-1$. 
Suppose for a contradiction that $|\n_i|\geq f_u+1$. 
Since $\nf$ is hereditary, there exists a set $\n\in\nf$ such that $|\n|= f_u+1$ and $2^\n\subseteq\nf$.
Let $\m=[d]\setminus\n$, then $|\m|=d-f_u-1\geq d-u-1\geq \frac{d}{2}\geq u+1$.
There are $u$ good vertices in total, 
so there exists $Q\subseteq \m$ with $| Q|=u+1$ such that $ Q$ contains all the good vertices in $\m$.
Then all the vertices in $\m\setminus Q$ are bad vertices. 
By Lemma \ref{facts} (d), for any $x\in\m\setminus Q$, $\{x\}\in \nf$.
Therefore, $\nf^\prime:=2^\n\cup\{\{x\}\mid x\in \m\setminus Q \}$ is a subfamily of $\nf$ with size $$2^{f_u+1}+(d-f_u-1)-(u+1)\geq2^{f_u+1}+(d-f_u-1)-(2^{f_u+1}-f_u-2+1)\geq d.$$
Note that $\nf^\prime$ does not contain any vertex $y\in Q$. 
Hence there are at least $d$ missing sets in $\mf $ containing $y$, 
implying that $d_\f(y)\leq 2^{d-1}-d<\md$. 
Thus every $y\in Q$ must be a good vertex. 
As a result, there are at least $|Q|=u+1$ good vertices, a contradiction to our definition.
\end{proof}

The following lemma is the most technical part in our proof of  Lemma~\ref{few_good_vertex_theorem}.

\begin{lemma}\label{dijiang}
   Let $u$ be the number of good vertices in $[d]$. Then $u\leq 7$.
\end{lemma}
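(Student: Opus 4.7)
\textbf{Proof plan for Lemma~\ref{dijiang}.}

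The plan is to argue by contradiction, assuming $u \geq 8$, and to extract two contradictory estimates from a double-count of $\sum_{x \in B} d_\nf(x)$, where $B$ denotes the set of bad vertices in $[d]$.

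First, I would translate the weight hypothesis into a constraint on the external weight. The identity $\sum_{x \in [d]} \win(x) = |\f| - 1 = 2^d - t - 1$ combined with the standing assumption $\sum_{x \in [d]} \ww_\g(x) < 2^d - c$ gives $\sum_{x \in [d]} \wout(x) < s$, where $s := t - c + 1$. Every good vertex $x$ has a neighbor $y \notin [d]$, so $\{x, y\} \in \g \setminus \f$ contributes $1/2$ to $\wout(x)$; summing yields $u/2 \leq \sum_{x \in [d]} \wout(x) < s$, and hence $s \geq \lceil (u+1)/2 \rceil \geq 5$.

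Next, I would produce matching bounds on $\sum_{x \in B} d_\nf(x)$. For the lower bound, \eqref{equ:d_N} together with $d_\g(x) = d_\f(x) \geq \md$ for bad $x$ gives $d_\nf(x) \geq s$, so $\sum_{x \in B} d_\nf(x) \geq (d - u)s$. For the upper bound, rewrite the same sum as $\sum_{\n \in \nf} |\n \cap B|$; by Lemma~\ref{facts}(d) there are at least $d - u$ bad singletons in $\nf$, leaving at most $t - 1 - (d - u) = s + c + u - d - 2$ non-empty non-singleton members. Each singleton contributes at most $1$ and each non-singleton contributes at most $|\n| \leq f_u$ by Lemma~\ref{size_lemma}, so
$$\sum_{x \in B} d_\nf(x) \leq (d - u) + (s + c + u - d - 2) f_u.$$

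Combining the two estimates and using $c \leq d$ yields $(d - u)(s - 1) \leq (s + u - 2) f_u$. Since $s \geq (u + 1)/2$ forces $(s - 1)/(s + u - 2) \geq 1/3$, this reduces to $d - u \leq 3 f_u$. Applying $u \leq d/2 - 1$ (Lemma~\ref{facts}(e)) together with the elementary bound $f_u \leq \log(2u) \leq \log d$ gives $d \leq d/2 - 1 + 3 \log d$, i.e., $d \leq 6 \log d - 2$, which fails for $d \geq 50$. The main obstacle is locating the productive double-count: the crude bound $\wout(x) \geq 1/2$ (rather than the sharper Lemma~\ref{externalweight}) already suffices to lower-bound $s$, so the argument hinges on the tension between the minimum-degree condition, which forces $d_\nf(x) \geq s$ at every bad vertex, and Lemma~\ref{size_lemma}, which caps the sizes of the sets in $\nf$ that can absorb this degree.
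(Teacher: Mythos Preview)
Your proof is correct and takes a genuinely different route from the paper's. The paper's argument leans on the sharper external-weight estimate of Lemma~\ref{externalweight}, combines it with the inequality~\eqref{equ:delta-dx}, and then performs a case analysis on the value of $f_u$ (the cases $f_u\in\{3,4\}$ versus $f_u\ge 5$). Your argument bypasses Lemma~\ref{externalweight} altogether: you only use the crude bound $\wout(x)\ge \tfrac12$ for good vertices to force $s:=t-c+1\ge 5$, and the real work is done by the double count $\sum_{x\in B}d_\nf(x)=\sum_{\n\in\nf}|\n\cap B|$, bounded below via $d_\nf(x)\ge s$ for bad $x$ and above via Lemma~\ref{size_lemma} and Lemma~\ref{facts}(d). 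This yields the clean inequality $d-u\le 3f_u$, which immediately contradicts $d\ge 50$ via $u\le d/2-1$ and $f_u\le\log d$. The payoff of your approach is a shorter, more elementary argument with no case split; the paper's route, while longer here, sets up machinery (Lemma~\ref{externalweight} and \eqref{equ:delta-dx}) that it reuses in the proof of Lemma~\ref{few_good_vertex_theorem}. One cosmetic point: the phrase ``non-empty non-singleton members'' is slightly inaccurate, since some of the remaining $t-1-(d-u)$ sets could be singletons $\{y\}$ for good $y$; but since any such set still satisfies $|\n\cap B|\le|\n|\le f_u$, your upper bound stands.
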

\begin{proof}
Suppose for a contradiction that $u\geq 8$.
By Lemma \ref{size_lemma}, $|\n_i|\leq f_u$ for each $i\in [t]$,
which implies that $|\m_i|\geq d-f_u$. 
Straightforward calculations yield the following bounds:
$$\sum_{x\in [d]}d_\f(x)=\sum_{\set\in\f}|\set|=d2^{d-1}-\sum_{i=1}^t|\m_i|\leq d2^{d-1}-t(d-f_u), \mbox{ and}$$
\begin{equation}\label{equ:delta-dx}
\sum_{x\in [d]}(\md-d_\f(x))\geq d(2^{d-1}-c+1)-(d2^{d-1}-t(d-f_u))=(d-f_u)t+d-cd.
\end{equation}
Since there are $u$ good vertices in $[d]$, using \eqref{equ:wout>1/2} we have $\sum_{x\in[d]}\wout(x)\geq \frac{u}{2}$, which implies that
 $$0\geq\sum_{x\in[d]}\ww(x)-(2^d-c)\geq\sum_{x\in[d]}\win(x)+\sum_{x\in[d]}\wout(x)-(2^d-c)\geq (2^d-t-1)+\frac{u}{2}-(2^d-c)\geq c-t-1+\frac{u}{2}.$$
So $t\geq c-1+\frac{u}{2}$. 
Note that $u\leq \frac{d}{2}-1$, thus $f_u\leq \ln (2u) \leq \ln d$. 
Using Lemma \ref{externalweight} and \eqref{equ:delta-dx},
$$
0\geq\sum_{x\in[d]}\ww(x)-(2^d-c)\geq\sum_{x\in[d]}\win(x)+\sum_{x\in[d]}\wout(x)-(2^d-c)
\geq (2^d-t-1)+\frac{(d-f_u)t+d-cd}{\ln c+3}-(2^d-c).
$$
The partial derivative of the right-hand side with respect to $t$ is $-1+\frac{d-f_u}{\ln c+3}\geq-1+\frac{d-\ln d}{\ln d+3}\geq 0$. 
Thus using the lower bound $t\geq c-1+\frac{u}{2}$, we can further obtain that 
$$
0\geq \left(2^d-\left(c-1+\frac{u}{2}\right)-1\right)+\frac{(d-f_u)(c-1+\frac{u}{2})+d-cd}{\ln c+3}-(2^d-c)= -\frac{u}{2}+\frac{(d-f_u)\frac{u}{2}-f_u(c-1)}{\ln c+3}.
$$
By simplifying this expression and using $c\leq d$, we obtain that
\begin{equation}\label{equ:u-1}
0\geq (d-f_u-\ln c-3) -\frac{2f_u}{u}(c-1)\geq (d-f_u-\ln d-3) -\frac{2f_u}{u}(d-1).
\end{equation}

If $f_u=3$ or $4$, then the partial derivative of the right-hand side of \eqref{equ:u-1}
with respect to $d$ is $$1-\frac{1}{d\oldln 2}-\frac{2f_u}{u}\geq\min\left\{1-\frac{1}{d\oldln 2}-\frac{2\times 3}{8},1-\frac{1}{d\oldln 2}-\frac{2\times 4}{12}\right\}\geq 0,$$
where the second inequality uses the assumption $u\geq 8$ and the fact $u\geq 12$ when $f_u=4$, 
thus the right-hand side of \eqref{equ:u-1} is increasing as $d$ grows. 
Using $d\geq 50$, we arrive the following contradiction
$$
\begin{aligned}
    0&\geq (50-f_u-\ln 50-3) -\frac{98f_u}{u}\geq 49\left(1-\frac{2f_u}{u}\right)- (f_u+8)\\
&\geq\min\left\{49\left(1-\frac{2\times 3}{8}\right)- (3+8),49\left(1-\frac{2\times 4}{12}\right)-(4+8)\right\}>0.
\end{aligned}
$$
Now assume $f_u\geq 5$. Using $f_u\leq \ln d$ and $\frac{f_u}u\leq \frac{f_u}{2^{f_u}-f_u}\leq \frac{5}{27}$, the inequality \eqref{equ:u-1} implies that
 $$0\geq(d-2\ln d-3)-\frac{10}{27}(d-1)\geq \frac{3}{5}d-2\ln d-3>0,$$
where the last inequality holds when $d\geq 50$.  
This contradiction completes Lemma~\ref{dijiang}.
\end{proof}

Finally, we are prepared to prove Lemma~\ref{few_good_vertex_theorem}.

\begin{proof}[\bf Proof of Lemma~\ref{few_good_vertex_theorem}]
By Lemma~\ref{dijiang}, there are $u\leq 7$ good vertices in $[d]$. By Lemma \ref{size_lemma}, for any $i\in [t]$, we have $|\n_i|\leq f_u\leq f_7=3$.
Then using Lemma \ref{externalweight} and \eqref{equ:delta-dx}, we obtain 
$$
0\geq\sum_{x\in[d]}\ww(x)-(2^d-c)=\sum_{x\in[d]}\win(x)+\sum_{x\in[d]}\wout(x)-(2^d-c)
\geq (2^d-t-1)+\frac{(d-3)t+d-cd}{\ln c+3}-(2^d-c).
$$
Then using $c\leq d, d\geq 50$ and $\frac{d}{d-\log d-6}\leq \frac{5}{3}$, it can be deduced that
$$
t\leq \frac{(c-1)(d-\ln c-3)}{d-\ln c-6}=c-1+\frac{3(c-1)}{d-\ln c-6}\leq d-1+\frac{3d}{d-\ln d-6}\leq d+4.
$$
Finally, by Lemma \ref{facts} (d), for any bad vertex $x$ in $[d]$, we have $\{x\}\in \nf$.
\end{proof}

\section{Concluding remarks}

In this paper, we investigate extremal problems regarding to the arrowing relation $(n,m)\to (a,b)$ and primarily focus on determining the limiting constant $m(2^{d-1}-c)$ for all $1\leq c\leq d$.
Through the use of novel concepts and analysis,
we prove a conjecture posed by Frankl and Watanabe \cite{WF1994} by showing $m(11)=5.3$. Furthermore, we provide the exact value of $m(2^{d-1}-c)$ for all $1\leq c\leq d$ when $d\geq 50$, thereby essentially resolving an open problem raised by Piga and Sch\"{u}lke \cite{PS2021}.

We would like to discuss some natural questions for future study.
It would be interesting to reduce the bound $d\geq 50$ to $d\geq 6$ in Theorem~\ref{zhudingli}.
It is worth noting that by refining the condition in the proof of Lemma~\ref{appoximation_lemma} from $c\le 2^{d}$ to $c\le d$, it is possible to improve the leading coefficient from $\frac{1}{6}$ to $\frac{1}{2}-\frac{1}{d-\ln e}$. 
This refinement could potentially enhance the bound in Theorem~\ref{zhudingli} from $d\geq 50$ to around $d\geq 28$. 
However, it is evident that closing the gap between $d=5$ and $d\geq 50$ may require the introduction of some innovative ideas.

The ultimate question in this direction would be to determine $m(s)$ for all $s\in \mathbb{N}$, which is equivalent to the question of
\begin{equation}\label{gen-ques}
\mbox{ determining  $m(2^{d-1}-c)$ for all $1\leq c\leq 2^{d-2}$.}
\end{equation}
Currently, it seems beyond our reach to provide a complete solution.
However, we will now discuss a potential recursive method for determining $m(2^{d-1}-c)$ for an infinite sequence of $c\geq d+1$.

\begin{definition}
    Let $n,s\in \mathbb{N}$. We call $\mathcal{F}$ an {\it $(n,s)$-extremal family},\footnote{This definition neither guarantees uniqueness nor existence. For a specified pair $(n,s)$, there might be many distinct $(n,s)$-extremal families, or there might not be any at all.} if
\begin{enumerate}
    \item $\mathcal{F}\subseteq 2^{[n]}$ is a hereditary family,
    \item $|\mathcal{F}|=m(s)\cdot n+1$, and
    \item for any $x\in [n]$, $d_{\mathcal{F}}(x)\geq s+1$.
\end{enumerate}
\end{definition}

Using this notion, we see that each $d$-set $U_i$ in Construction \ref{construction_c<d} provides an $(d,2^{d-1}-c)$-extremal family for $1\leq c\leq d-1$,
and each $2d$-set $U_{2i-1}\cup U_{2i}$ in Construction \ref{construction_c=d} provides an $(2d,2^{d-1}-d)$-extremal family. 
We observe the following fact.
\begin{fact}
Assume $\mathcal{F}$ is an $(n,s)$-extremal family. 
Write $F^c=[n]\setminus F$ for all $F\subseteq [n]$.
Let $\mathcal{F}^*=2^{[n]}\setminus\{F^c \mid F\in \mathcal{F}\}$. 
Then, the following hold:
\begin{itemize}
    \item $\mathcal{F}^*\subseteq 2^{[n]}$ is a hereditary family,
    \item $|\mathcal{F}^*|=2^n-|\mathcal{F}|=2^n-(m(s)\cdot n+1)$, and
    \item For any $x\in [n]$, $d_{\mathcal{F}^*}(x)=2^{n-1}-|\mathcal{F}|+d_{\mathcal{F}}(x)\geq 2^{n-1}-m(s)\cdot n+s$.
\end{itemize}
\end{fact}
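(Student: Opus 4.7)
The plan is to verify each of the three bullet points as a direct unpacking of the definition. The pivotal observation to keep in mind throughout is the reformulation
\[
G\in \mathcal{F}^* \iff G^c\notin \mathcal{F},
\]
which follows immediately from $\mathcal{F}^*=2^{[n]}\setminus \{F^c:F\in\mathcal{F}\}$ together with the fact that complementation $F\mapsto F^c$ is an involution on $2^{[n]}$. Once this is in hand, each bullet follows by a short counting or containment argument.

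For the first bullet (hereditariness), I would start with $F\in \mathcal{F}^*$ and $G\subseteq F$ and show $G\in \mathcal{F}^*$; by the reformulation, it suffices to prove $G^c\notin \mathcal{F}$ from $F^c\notin \mathcal{F}$. Since $G\subseteq F$ gives $F^c\subseteq G^c$, if $G^c$ were in $\mathcal{F}$ then the hereditariness of $\mathcal{F}$ would force $F^c\in \mathcal{F}$, a contradiction. For the second bullet, the bijection $F\mapsto F^c$ on $2^{[n]}$ yields $|\{F^c:F\in\mathcal{F}\}|=|\mathcal{F}|$, so $|\mathcal{F}^*|=2^n-|\mathcal{F}|=2^n-(m(s)n+1)$.

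For the third bullet I would count $d_{\mathcal{F}^*}(x)$ in two ways:
\[
d_{\mathcal{F}^*}(x)=\bigl|\{G\subseteq[n]:x\in G\}\bigr|-\bigl|\{G\subseteq[n]:x\in G,\ G^c\in \mathcal{F}\}\bigr|.
\]
The first term is $2^{n-1}$, while the second term, via the complementation bijection $G\leftrightarrow G^c=F$, equals $|\{F\in \mathcal{F}:x\notin F\}|=|\mathcal{F}|-d_{\mathcal{F}}(x)$. This gives $d_{\mathcal{F}^*}(x)=2^{n-1}-|\mathcal{F}|+d_{\mathcal{F}}(x)$; substituting $|\mathcal{F}|=m(s)n+1$ and $d_{\mathcal{F}}(x)\geq s+1$ yields the claimed lower bound $d_{\mathcal{F}^*}(x)\geq 2^{n-1}-m(s)n+s$. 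I do not anticipate any real obstacle here: the whole argument is a few lines of bookkeeping and relies only on the definition of $\mathcal{F}^*$ and of an $(n,s)$-extremal family, with no need to invoke any of the heavier machinery (piles, weights, Katona's inequality) developed earlier in the paper.
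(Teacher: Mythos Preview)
Your proof is correct and is exactly the routine verification the paper has in mind; the paper states this Fact without proof (``We observe the following fact''), and your three-bullet argument via the reformulation $G\in\mathcal{F}^*\iff G^c\notin\mathcal{F}$ is the natural way to supply the omitted details.
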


Based on this fact, we propose the following question.
\begin{question}\label{ques}
Let $n,s\in \mathbb{N}$ satisfy that $m(s)\cdot n+1\leq 2^{n-3}$ and $s\leq 2^{\frac{n}{2}-1}-1$. 
If $\mathcal{F}$ is an $(n,s)$-extremal family, 
determine whether $\mathcal{F}^*$ is an $(n,2^{n-1}+s-m(s)\cdot n-1)$-extremal family.
\end{question}

If this holds true, it would imply that for any $n$ and $s$ satisfying the conditions of Question \ref{ques}, it holds that $$m(2^{n-1}+s-m(s)\cdot n-1)=\frac{2^n-m(s)\cdot n-2}{n}.$$
In Appendix~\ref{sec:F^star}, we provide two examples that assume the validity of Question~\ref{ques}. 
These examples aim to shed some light on the general question of \eqref{gen-ques}.

Finally, we would like to direct interested readers to the paper by Piga and Sch\"{u}lke \cite{PS2021}, where they discuss and formalize several intriguing open problems related to the arrowing relation.   

\bibliographystyle{unsrt}

\appendix
\section{Proof of an inequality in Lemma \ref{weight_lemma}}\label{compute}
In this section, we will perform detailed calculations to establish the validity of the following inequality for $d\geqslant 50$:
\[
h(d):=\sum_{H\in \mathcal{R}(d-1)}\left(\frac{1}{d-|H|}-\frac{1}{d}\right)<\frac{1}{18}. 
\]

{\noindent \it Proof.}
Since for any $H\in \mathcal{R}(d-1)$, we have $|H|\leqslant \log(d-1)$. 
So it is easy to see that 
\[
h(d)\leqslant \sum_{H\in \mathcal{R}(d-1)}\left(\frac{1}{d-\log(d-1)}-\frac{1}{d}\right)=\frac{(d-1)\log(d-1)}{d(d-\log(d-1))}:=h_{1}(d). 
\]
When $d\geqslant 133$, we can derive that 
\[
h_{1}'(d)=\frac{-\log^{2}(d-1)-(d^{2}-2d)\log(d-1)+d^{2}}{d^{2}(d-\log(d-1))^{2}}<\frac{-\log^{2}(d-1)-\frac{d^{2}}{2}\log(d-1)+d^{2}}{d^{2}(d-\log(d-1))^{2}}<0
\]
Since $h_{1}(133)<\frac{1}{18}$, we can obtain that when $d\geqslant 133$, 
\[
h(d)\leqslant h_1(d)\leq h_{1}(133)<\frac{1}{18}. 
\]
When $50\leqslant d\leqslant 64$, we have $\mathcal{R}(49)\subseteq \mathcal{R}(d-1)\subsetneq 2^{[6]}$, where 
\[\mathcal{R}(49)=2^{[5]}\cup \{F\cup \{6\}:F\subseteq [4]\}\cup \{\{5,6\}\}.\]
Thus in this case, we have 
\begin{align}
    h(d)&\leqslant \sum_{H\in \mathcal{R}(49)}\frac{1}{d-|H|}+\frac{d-50}{d-5}-\frac{d-1}{d}=h_2(d),  \notag 
\end{align}
where 
\[h_{2}(d)=\frac{2}{d}+\frac{6}{d-1}+\frac{15}{d-2}+\frac{16}{d-3}+\frac{9}{d-4}-\frac{43}{d-5}.\]
Using Jensen's inequality, we have 
\begin{align}
    h_{2}'(d)&=\frac{43}{(d-5)^{2}}-\left(\frac{2}{d^{2}}+\frac{6}{(d-1)^{2}}+\frac{15}{(d-2)^{2}}+\frac{16}{(d-3)^{2}}+\frac{9}{(d-4)^{2}}\right) \leqslant \frac{43}{(d-5)^{2}}-\frac{48}{(d-\frac{5}{2})^{2}}<0. \notag  
\end{align}
Since $h_{2}(50)<\frac{1}{18}$, we can derive that when $50\leqslant d\leqslant 64$, it holds that 
\[
h(d)\leq h_2(d)\leqslant h_{2}(50)<\frac{1}{18}. 
\]
Similarly, when $65\leqslant d\leqslant 128$, since $2^{[6]}\subseteq \mathcal{R}(d-1)\subsetneq 2^{[7]}$, we have 
\begin{align}
h(d)&\leqslant \sum_{H\subseteq [6]}\frac{1}{d-|H|}+\frac{d-65}{d-6}-\frac{d-1}{d}= h_3(d),  \notag 
\end{align}
where  
\[
h_{3}(d)=\frac{2}{d}+\frac{6}{d-1}+\frac{15}{d-2}+\frac{20}{d-3}+\frac{15}{d-4}+\frac{6}{d-5}-\frac{58}{d-6}.
\]
Using Jensen's inequality, we can derive the following 
\begin{align}
    h_{3}'(d)&=\frac{58}{(d-6)^{2}}-\left(\frac{2}{d^{2}}+\frac{6}{(d-1)^{2}}+\frac{15}{(d-2)^{2}}+\frac{20}{(d-3)^{2}}+\frac{15}{(d-4)^{2}}+\frac{6}{(d-5)^{2}}\right) \notag \\
    &\leqslant \frac{58}{(d-6)^{2}}-\frac{64}{(d-\frac{93}{32})^{2}}<0. \notag  
\end{align}
Therefore, since $h_{3}(68)<\frac{1}{18}$, we have that when $68\leqslant d\leqslant 128$, 
\[
h(d)\leq h_3(d)\leqslant h_{3}(68)<\frac{1}{18}. 
\]
To complete the proof, we will now consider the remaining cases when $65\leqslant d\leqslant 67$ and $129\leqslant d\leqslant 132$. 
We will calculate the formula for each case and provide a list of them as follows:
\begin{align}
&h(65)=\frac{1}{65}+\frac{6}{64}+\frac{15}{63}+\frac{20}{62}+\frac{15}{61}+\frac{6}{60}+\frac{1}{59}-\frac{64}{65}\approx 0.048<\frac{1}{18} \notag \\
&h(66)=\frac{1}{66}+\frac{7}{65}+\frac{15}{64}+\frac{20}{63}+\frac{15}{62}+\frac{6}{61}+\frac{1}{60}-\frac{65}{66}\approx 0.047<\frac{1}{18} \notag \\
&h(67)=\frac{1}{67}+\frac{7}{66}+\frac{16}{65}+\frac{20}{64}+\frac{15}{63}+\frac{6}{62}+\frac{1}{61}-\frac{66}{67}\approx 0.046<\frac{1}{18} \notag \\
&h(129)=\frac{1}{129}+\frac{7}{128}+\frac{21}{127}+\frac{35}{126}+\frac{35}{125}+\frac{21}{124}+\frac{7}{123}+\frac{1}{122}-\frac{128}{129}\approx 0.028<\frac{1}{18} \notag \\
&h(130)=\frac{1}{130}+\frac{8}{129}+\frac{21}{128}+\frac{35}{127}+\frac{35}{126}+\frac{21}{125}+\frac{7}{124}+\frac{1}{123}-\frac{129}{130}\approx 0.027<\frac{1}{18} \notag \\
&h(131)=\frac{1}{131}+\frac{8}{130}+\frac{22}{129}+\frac{35}{128}+\frac{35}{127}+\frac{21}{126}+\frac{7}{125}+\frac{1}{124}-\frac{130}{131}\approx 0.027<\frac{1}{18} \notag \\
&h(132)=\frac{1}{132}+\frac{8}{131}+\frac{23}{130}+\frac{35}{129}+\frac{35}{128}+\frac{21}{127}+\frac{7}{126}+\frac{1}{125}-\frac{131}{132}\approx 0.027<\frac{1}{18}. \notag 
\end{align}
In summary, we have demonstrated that for any $d\geqslant 50$, the inequality
\[
h(d)=\sum_{H\in \mathcal{R}(d-1)}\left(\frac{1}{d-|H|}-\frac{1}{d}\right)<\frac{1}{18}
\]
holds, as desired. \qed

\section{Two examples assuming the validity of Question~\ref{ques}}\label{sec:F^star}

Assuming Question~\ref{ques} is valid, we would be able to determine $m(2^{d-1}-c)$ for many values of $c\geq d+1$. 
Here, we present two explicit examples based on $(n,s)$-extremal hereditary families, specifically focusing on the cases where $s=0$ and $s=1$.
Assuming the validity of Question~\ref{ques} in these cases, one can determine infinitely many values of $m(2^{d-1}-c)$ for $c=d+1$ and $c=\frac{3}{2}d$.

\begin{example}[For $s=0$] For $d\geq 6$, the following family
$$\mathcal{F}=\{\emptyset\}\cup\{\{x\}\mid x\in [d]\}$$ is a $(d,0)$-extremal family. 
Assume that Question~\ref{ques} holds in this case. Then
$$\mathcal{F}^*=2^{[d]}\setminus\{[d],\{1\}^c,\{2\}^c,\dots,\{d\}^c\}$$ 
becomes a $(d,2^{d-1}-d-1)$-extremal family, which implies $m(2^{d-1}-d-1)=\frac{2^d-d-2}{d}$ for any $d\geq 6$.
\end{example}

\begin{example}[For $s=1$] Let $d\geq 8$ and $2\mid d$. The following family
$$\mathcal{F}=\{\emptyset\}\cup\{\{x\}\mid x\in [d]\}\cup \{\{x,x+1\}\mid x\in [d] \text{ is odd}\}$$ is a $(d,1)$-extremal family.
Assume that Question~\ref{ques} holds in this case. Then
$$\mathcal{F}^*=2^{[d]}\setminus\{[d],\{1\}^c,\{2\}^c,\dots,\{d\}^c,\{1,2\}^c,\{3,4\}^c,\dots,\{d-1,d\}^c\}$$is a $(d,2^{d-1}-\frac{3}{2}d)$-extremal family,
which implies $m(2^{d-1}-\frac{3}{2}d)=\frac{2^d-\frac{3}{2}d-2}{d}$ for any even $d\geq 8$.
\end{example}

\end{document}